\numberwithin{equation}{section}
\theoremstyle{plain}
\newtheorem*{thma}{Theorem A}
\newtheorem*{thmb}{Theorem B}
\newtheorem*{thmc}{Theorem C}
\newtheorem*{thmd}{Theorem D}
\newtheorem{thm}{Theorem}[section]
\newtheorem*{propa}{Proposition A}
\newtheorem{corollary}[thm]{Corollary}
\theoremstyle{definition}
\newtheorem{rmk}{Remark}
\def\Gal{\operatorname{Gal}}
\newtheorem*{hypothesis*}{Hypothesis}
\newcommand{\As}{\mathop{\rm As}\nolimits}
\newcommand{\tr}{\mathop{\rm tr}\nolimits}
\newcommand{\vol}{{\rm vol}}
\newcommand{\mf}[1]{\mathfrak{#1}}
\newcommand{\ms}[1]{\mathscr{#1}}
\newcommand{\ratint}{\mathbb Z}
\newcommand{\intring}[1]{\mathcal{O}_{#1}}
\newcommand{\cpxnum}{\mathbb{C}}
\newcommand{\bfb}[1]{{\mathbf #1}}
\newcommand{\nami}[1]{\widetilde{#1}}
\newcommand{\yama}[1]{\widehat{#1}}
\def\parref#1{\ref{#1}}
\def\thmref#1{Theorem~\parref{#1}}
\def\secref#1{\S\parref{#1}}
\def\lmref#1{Lemma~\parref{#1}}
\def\subsecref#1{\S\parref{#1}}
\def\makeop#1{\expandafter\def\csname#1\endcsname
  {\mathop{\rm #1}\nolimits}\ignorespaces}
\newcommand{\wh}{\widehat}
\newcommand\stt[1]{\left\{#1\right\}}
\newcommand{\pMX}[4]{\begin{pmatrix}
{#1}& {#2}\\
{#3}&{#4}\end{pmatrix} }
\newtheorem{lm}[thm]{Lemma}
\def\longto{\longrightarrow}
\def\ul{\underline}
\newcommand{\lmid}[1]{\raise3pt\hbox{\vrule width 0.4pt height #1pt depth #1pt}}
\newcommand{\mapdiag}[5]{\xymatrix{{#1}\hspace{-12mm}&\ #2 \ar[r] & #3 \\ & #4 \ar@{}[u]|{\mws{$\in$}} \ar@{|->}[r] & #5 \ar@{}[u]|{\mws{$\in$}}}}
\def\mat(#1,#2,#3,#4){\left(\begin{array}{cc}#1 &#2 \\ #3& #4\end{array}\right)}
\def\GL{\mathop{\mathrm{GL}}\nolimits}
\def\SL{\mathop{\mathrm{SL}}\nolimits}
\def\Gal{\mathop{\mathrm{Gal}}\nolimits}
\def\ord{\mathrm{ord}}
\def\Sp{\mathop{\mathrm{Sp}}\nolimits}
\numberwithin{equation}{subsection}
\numberwithin{figure}{subsection}
\def\B(#1,#2){{\rm Ind}_{B(F)}^{\GL_2(F)}(#1\boxtimes #2)}
\title{Gamma factors for Asai representations of $\GL_2$}
\subjclass[2010]{11F66, 11S40}
\author{SHIH-YU CHEN}
\author{YAO CHENG} 
\author{ISAO ISHIKAWA}
\keywords{gamma factor, Asai representations, triple product $L$-functions}
\address{Shih-Yu Chen, Institute of Mathematics, Academia Sinica, Taipei 10617, Taiwan}
\email{r98221018@ntu.edu.tw}
\address{Yao Cheng, Institute of Mathematics, Academia Sinica, Taipei 10617, Taiwan}
\email{d02221003@ntu.edu.tw}
\address{Isao Ishikawa, RIKEN Center for Advanced Intelligence Project, Tokyo 103-0027, Japan}
\email{isao.ishikawa@riken.jp}
\def\cL{{\mathcal L}}
\def\ot{\otimes}
\def\b{\bar}
\def\t{\tilde}
\def\bp{\boxplus}
\def\op{\oplus}
\def\cS{\mathfrak{S}}
\def\cW{\mathscr{W}}
\def\cB{\mathcal{B}}
\def\C{\mathbb{C}}
\def\R{\mathbb{R}}
\def\Z{\mathbb{Z}}
\def\SL{{\rm{SL}}}
\def\GL{{\rm{GL}}}
\def\GSp{{\rm GSp}}
\def\Sp{{\rm Sp}}
\def\A{{\mathbb A}}
\def\C{{\mathbb C}}
\def\R{{\mathbb R}}
\def\Q{{\mathbb Q}}
\def\Z{{\mathbb Z}}
\def\<{\langle}
\def\>{\rangle}
\def\B{B}
\def\P{P}
\def\U{U}
\def\G{{\bf G}}
\def\x{\times}
\def\t{\tilde}
\def\bp{\begin{pmatrix}}
\def\ep{\end{pmatrix}}
\def\<{\langle}
\def\>{\rangle}
\def\w_E{\varpi_E}
\begin{document}

\maketitle
\begin{abstract}
Let $E$ be a quadratic semisimple extension of a local field $F$ of characteristic zero. We determine explicit relation between gamma factors for Asai representations of ${\rm R}_{E/F}\GL_{2/E}$ defined by the Weil-Deligne representations and local zeta integrals. When $E=F\times F$, the results were due to Henniart and Jacquet. We completed the theory in this article based on explicit calculation.
\end{abstract}
\tableofcontents

\section{Introdution}
\subsection{Main results}
Let $F$ be a local field of characteristic zero and $E$ be a quadratic semisimple $F$-algebra. Fix a non-trivial additive character $\psi$ of $F$ and an element $\xi \in E^{\times}$ such that ${\rm tr}_{E/F}(\xi)=0$. Let $W_F'$ be the Weil-Deligne group of $F$. We identity the Langlands dual group of ${\rm R}_{E/F}\GL_{2/E}$ with $(\GL_2(\C)\times\GL_2(\C))\rtimes {\rm Gal}(\overline{F}/F)$, where the action of ${\rm Gal}(\overline{F}/F)$ on $\GL_2(\C)\times\GL_2(\C)$ is the permutation of components induced by the homomorphism ${\rm Gal}(\overline{F}/F)\rightarrow {\rm Gal}(E/F).$ Let 
$$r : (\GL_2(\C)\times\GL_2(\C))\rtimes {\rm Gal}(\overline{F}/F) \rightarrow \GL(\C^2 \otimes \C^2)\times {\rm Gal}(\overline{F}/F)$$
be the Asai representation defined by
\begin{align}\label{E:Asai representation}
r(g_1,g_2,\sigma) = \begin{cases}(g_1\otimes g_2,\sigma) & \mbox{ if $\sigma\vert_E$ is trivial},\\
(g_2\otimes g_1,\sigma) & \mbox{ if $\sigma\vert_E$ is not trivial}.        \end{cases}
\end{align}

Let $\pi$ and $ \tau$ be irreducible admissible representations of $\GL_2(E)$ and $\GL_n(F)$ with central characters $\omega_{\pi}$ and $\omega_{ \tau}$, respectively. Denote $\phi_{\pi} : W_F' \rightarrow (\GL_2(\C)\times\GL_2(\C))\rtimes {\rm Gal}(\overline{F}/F)$ and $\phi_{ \tau} : W_F' \rightarrow \GL_n(\C) \rtimes {\rm Gal}(\overline{F}/F)$ the parameters associated to $\pi$ and $ \tau$ via the local Langlands correspondence. Let $(r\circ \phi_{\pi}) \otimes \phi_{ \tau} : W_F' \rightarrow \GL_{4n}(\C) \rtimes {\rm Gal}(\overline{F}/F)$ be the parameter defined by the tensor representation 
$$\GL(\C^2\otimes \C^2)\times\GL_n(\C) \rightarrow \GL_{4n}(\C).$$
Let ${\rm As}\,\pi$ be the irreducible admissible representation of $\GL_4(F)$ associated to the parameter $r \circ \phi_{\pi}$ via the local Langlands correspondence. Denote $L_{\rm Gal}(s, {\rm As}\,\pi \otimes  \tau)$ and $\varepsilon_{\rm Gal}(s,{\rm As}\,\pi\otimes  \tau,\psi)$ the $L$-factor and $\varepsilon$-factor associated to $(r \circ \phi_{\pi}) \otimes \phi_{ \tau}$, respectively. Put
$$\gamma_{\rm Gal}(s,{\rm As}\,\pi \otimes  \tau,\psi) = \varepsilon_{\rm Gal}(s,{\rm As}\,\pi\otimes  \tau,\psi)L_{\rm Gal}(1-s, {\rm As}\,\pi^{\vee} \otimes  \tau^{\vee})L_{\rm Gal}(s, {\rm As}\,\pi \otimes  \tau)^{-1}.$$
Here $\pi^\vee$ (resp. $\tau^\vee$) is the admissible dual of $\pi$ (resp. $\tau$).

In this article, we compare the local factors defined by local zeta integrals with the local factors defined by the Weil-Deligne representation as above in the cases $n=1,2$. When $n=1$, we consider the local factors $L_{\rm RS}(s, {\rm As}\,\pi \otimes  \tau)$ and $\varepsilon_{\rm RS}(s,{\rm As}\,\pi\otimes  \tau,\psi,\xi)$ defined by the Rankin-Selberg local zeta integrals when $\pi$ is generic (Cf. \cite{Jac72}, \cite{F88}, \cite{F93}, and \cite{Kab04}). When $n=2$, we consider the local factors $L_{\rm PSR}(s, {\rm As}\,\pi \otimes  \tau)$ and $\varepsilon_{\rm PSR}(s,{\rm As}\,\pi\otimes  \tau,\psi,\xi)$ defined by the local zeta integrals of Piatetski-Shapiro and Rallis when both $\pi$ and $ \tau$ are generic (Cf. \cite{PSR1987} and \cite{Ikeda1989}). The main results of this paper are the following 
\begin{thma}[Corollary \ref{equality of epsilon factors} and Theorem \ref{T:main theorem}]
Let $n=1$. Assume $\pi$ is generic. We have
\begin{align*}
L_{\rm RS}(s,{\rm As}\,\pi\otimes \tau)& = L_{\rm Gal}(s,{\rm As}\,\pi\otimes \tau),\\
\varepsilon_{\rm RS}(s,{\rm As}\,\pi\otimes \tau,\psi,\xi) &= \omega_{\pi}(\xi)\omega_{\tau}(\xi^2)|\xi^2|_F^{s-1/2}\lambda_{E/F}(\psi)^{-1}\varepsilon_{\rm Gal}(s,{\rm As}\,\pi\otimes \tau,\psi).
\end{align*}
Here $\lambda_{E/F}(\psi)$ is the Langlands constant for $E/F$ with respect to $\psi$.
\end{thma}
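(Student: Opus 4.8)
The case $E\cong F\times F$ is the theorem of Henniart and Jacquet, so assume from now on that $E/F$ is a quadratic field extension; note that $\tau$ is then a character of $F^\times$ (so $\omega_\tau=\tau$) and that ${\rm tr}_{E/F}(\xi)=0$ forces $\xi^2\in F^\times$, so the prefactor is meaningful. The plan is to prove first the $\gamma$-factor identity
$$\gamma_{\rm RS}(s,\As\pi\otimes\tau,\psi,\xi)=\omega_\pi(\xi)\,\omega_\tau(\xi^2)\,|\xi^2|_F^{\,s-1/2}\,\lambda_{E/F}(\psi)^{-1}\,\gamma_{\rm Gal}(s,\As\pi\otimes\tau,\psi)$$
for every generic $\pi$, and then to deduce the $\varepsilon$-factor statement from it together with the equality $L_{\rm RS}=L_{\rm Gal}$ (applied also to the contragredients). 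I would begin by recording the formal properties of $\gamma_{\rm RS}$ that are immediate from the functional equation of the Rankin--Selberg local zeta integral: its transformation under $\psi\mapsto\psi_a$ and $\xi\mapsto a\xi$, which produces exactly the prefactor $\omega_\pi(\xi)\omega_\tau(\xi^2)|\xi^2|_F^{s-1/2}$ by the usual scaling law adapted to the Asai situation; its invariance under unramified twists of $\pi$; and its multiplicativity when $\pi$ is a subquotient of a principal series of $\GL_2(E)$.

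For the principal-series case, realize $\pi$ as a constituent of ${\rm Ind}_{B(E)}^{\GL_2(E)}(\mu_1\boxtimes\mu_2)$ with $\mu_1,\mu_2$ characters of $E^\times$. On the Galois side a direct computation of the tensor induction $r$ gives
$$r\circ\phi_\pi\;\cong\;\As(\mu_1)\,\oplus\,\As(\mu_2)\,\oplus\,{\rm Ind}_{W_E}^{W_F}\!\big(\mu_1\cdot{}^\sigma\!\mu_2\big),$$
where $\As(\mu_i)$ denotes the tensor induction of the character $\mu_i$ of $W_E$, a character of $W_F$ restricting to $\mu_i\cdot{}^\sigma\!\mu_i$ on $W_E$. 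The heart of the argument is to unfold the local zeta integral, using the Jacquet-integral expression for the Whittaker function of ${\rm Ind}_{B(E)}^{\GL_2(E)}(\mu_1\boxtimes\mu_2)$ with respect to the character $\psi_\xi(x)=\psi({\rm tr}_{E/F}(\xi x))$ of $E$ --- which is trivial on $F$ precisely because ${\rm tr}_{E/F}(\xi)=0$, so that the integration over $N_2(F)\backslash\GL_2(F)$ is well posed --- and to show that it factors as a product of a Tate integral attached to each of $\As(\mu_1)\tau$ and $\As(\mu_2)\tau$, contributing $\gamma(s,\As(\mu_i)\tau,\psi)$, together with a zeta integral over $E^\times$ attached to $\mu_1\cdot{}^\sigma\!\mu_2\cdot(\tau\circ{\rm N}_{E/F})$, contributing $\gamma_E(s,\mu_1\cdot{}^\sigma\!\mu_2\cdot(\tau\circ{\rm N}_{E/F}),\psi_E)$. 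Since inductivity of $\varepsilon$-factors gives $\gamma(s,{\rm Ind}_{W_E}^{W_F}(\mu_1\cdot{}^\sigma\!\mu_2)\otimes\tau,\psi)=\lambda_{E/F}(\psi)\,\gamma_E(s,\mu_1\cdot{}^\sigma\!\mu_2\cdot(\tau\circ{\rm N}_{E/F}),\psi_E)$, comparison of the two sides on this summand produces exactly the factor $\lambda_{E/F}(\psi)^{-1}$, while multiplicativity of $\gamma_{\rm Gal}$ along the displayed direct sum matches the remaining pieces. This proves the identity for all irreducible principal series; specializing $(\mu_1,\mu_2)$ to the points of reducibility, where the zeta integral singles out the generic constituent, extends it to all twists $\mathrm{St}_E\otimes\chi$ of the Steinberg representation.

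There remains the supercuspidal case. If $\pi$ is dihedral, arising from a character $\theta$ of $L^\times$ for a quadratic extension $L/E$, I would compute $r\circ\phi_\pi$ explicitly --- typically a sum of twisted inductions from $W_L$ and from the other quadratic subextensions of the biquadratic compositum --- express $\gamma_{\rm Gal}(s,\As\pi\otimes\tau,\psi)$ by repeated inductivity, and match it against a local zeta integral unfolded through the $L$-level Whittaker data; the substantive point is the bookkeeping of Langlands constants for $L/E$, for $E/F$, and for the intermediate extensions, using their multiplicativity in towers. If $\pi$ is not dihedral --- which happens only in residue characteristic $2$ --- I would pass to a local--global argument: globalize $F$, $E/F$, $\pi$ and $\tau$ to a number field, a quadratic extension, a cuspidal automorphic representation and a Hecke character that are unramified or of principal-series type away from one place, and invoke the global functional equation of the Flicker--Asai $L$-function together with the identity already proved at all the other places. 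The equality $L_{\rm RS}=L_{\rm Gal}$ is obtained in parallel, by comparing the precise poles of the local zeta integrals with those of $L_{\rm Gal}$; via multiplicativity of the integral this again reduces to the principal-series and Steinberg computations, where it becomes the identification of the greatest common divisor of explicit Tate-type integrals.

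The step I expect to be the main obstacle is the principal-series unfolding: the existence of the three-fold factorization is structurally forced, but pinning down every normalization --- the Haar measures on $\GL_2(F)$, $N_2(F)$ and $E^\times$, the precise interaction of $\psi_\xi$ with the Jacquet integral, and the $\psi$-dependence of $\lambda_{E/F}(\psi)$ --- so that the prefactor emerges as exactly $\omega_\pi(\xi)\omega_\tau(\xi^2)|\xi^2|_F^{s-1/2}\lambda_{E/F}(\psi)^{-1}$ with no residual constant is where the real care is needed. The secondary difficulty is the Langlands-constant bookkeeping in the dihedral supercuspidal case.
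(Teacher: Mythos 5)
Your strategy for non-archimedean $F$ tracks the paper's: multiplicativity of $\gamma_{\rm RS}$ for principal-series $\pi$ (the paper proves it by computing both sides of the functional equation for carefully chosen $W$ and $\Phi$, which is exactly your ``unfolding'' with the normalizations pinned down, Theorem \ref{multiplicativity for ps}), inductivity of $\varepsilon$-factors on the Galois side producing $\lambda_{E/F}(\psi)^{-1}$, and a globalization argument (Shahidi's existence theorem, Krishnamurthy's Asai transfer to $\GL_4$, and the two global functional equations) for the supercuspidal case; the paper applies the globalization uniformly to all supercuspidals, so your separate dihedral computation is unnecessary and, as sketched, far from routine. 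But there are two genuine gaps. First, Theorem A is a statement about every local field $F$ of characteristic zero, hence includes $F=\R$, $E=\C$, and there your proposal says nothing. This case cannot be absorbed into the $p$-adic discussion: for archimedean $F$ the factors $L_{\rm RS}$ and $\varepsilon_{\rm RS}$ are not even defined until one develops the analytic theory of the zeta integrals --- absolute convergence, meromorphic continuation, the g.c.d.-type definition of $L_{\rm RS}$, the functional equation via uniqueness of invariant forms (Loke) and Jacquet's reduction to $K$-finite data, and explicit computations with vector-valued Whittaker functions to evaluate $\varepsilon_{\rm RS}$ --- which the paper notes is absent from the literature and which occupies all of its Section \ref{S3} (Theorem \ref{T:main theorem}). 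Your wording (``Steinberg'', ``residue characteristic $2$'') shows the sketch is purely $p$-adic; note also that your own local--global step needs the local identity at the archimedean places of the auxiliary number field (or a careful choice of globalization splitting all infinite places), so the archimedean case is not detachable from your argument either.

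Second, your proposed proof of $L_{\rm RS}=L_{\rm Gal}$ breaks down for supercuspidal $\pi$. You assert that the $L$-identity ``reduces to the principal-series and Steinberg computations via multiplicativity'', but for supercuspidal $\pi$ the Asai $L$-factor need not be $1$: when $\pi$ (suitably twisted by $\tau$) is distinguished or $\omega_{E/F}$-distinguished, $L_{\rm Gal}(s,{\rm As}\,\pi\otimes\tau)$ and $L_{\rm RS}(s,{\rm As}\,\pi\otimes\tau)$ have poles, and no multiplicativity relates a supercuspidal to principal series. The equality of the two $L$-factors in that case is a nontrivial theorem of Anandavardhanan--Rajan, Henniart and Matringe, which the paper cites (Remark \ref{R:1}) rather than reproves. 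This is not a cosmetic point for the $\varepsilon$-statement: the globalization argument only yields the $\gamma$-identity at the supercuspidal place (as in the proof of Corollary \ref{equality of epsilon factors}), and passing from $\gamma$ to $\varepsilon$ requires $L_{\rm RS}=L_{\rm Gal}$ for $\pi$ and $\pi^{\vee}$ there. So you must either prove the $L$-identity for distinguished supercuspidals or quote it, as the paper does.
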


\begin{thmb}[Theorem \ref{T:twisted Asai gamma factor}]
Let $n=2$. Assume both $\pi$ and $ \tau$ are generic, and $ \tau$ is a subquotient of a principal series representation. We have
\begin{align*}
\gamma_{\rm PSR}(s,{\rm As}\,\pi \otimes  \tau,\psi,\xi) = \omega(4\xi^2)^{-1}|4\xi^2|_F^{-2s+1}\omega_{E/F}(-1)\gamma_{\Gal}(s,{\rm As}\,\pi \otimes  \tau,\psi).
\end{align*}
Here $\omega = \omega_{\pi}\vert_{F^{\times}}\cdot\omega_{ \tau}$ and $\omega_{E/F}$ is the quadratic character of $F^{\times}$ associated with $E/F$ by local class field theory.
\end{thmb}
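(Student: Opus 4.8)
The plan is to deduce Theorem B from the $n=1$ case (Theorem A) by multiplicativity of both gamma factors in the $\GL_2(F)$-argument $\tau$. Since $\tau$ is a generic subquotient of a principal series, we may assume $\tau = \mathrm{Ind}_B^{\GL_2(F)}(\chi_1\boxtimes\chi_2)$ for two characters $\chi_1,\chi_2$ of $F^\times$ (so $\omega_\tau=\chi_1\chi_2$): passing from an irreducible principal series to its Steinberg-type generic constituent changes neither $\gamma_{\rm PSR}$ -- since the local zeta integral involves $\tau$ only through its $\psi$-Whittaker model, which is the same for the reducible induced representation and for its generic constituent -- nor $\gamma_{\Gal}$ -- since the gamma factor of a Weil-Deligne representation is insensitive to the monodromy operator, so $\gamma_{\Gal}$ of the parameter of $\mathrm{St}_\chi$ equals that of its semisimplification. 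On the Galois side, $(r\circ\phi_\pi)\otimes\phi_\tau = \big((r\circ\phi_\pi)\otimes\chi_1\big)\oplus\big((r\circ\phi_\pi)\otimes\chi_2\big)$, so additivity of $L$- and $\varepsilon$-factors over direct sums gives
\[
\gamma_{\Gal}(s,{\rm As}\,\pi\otimes\tau,\psi) = \gamma_{\Gal}(s,{\rm As}\,\pi\otimes\chi_1,\psi)\,\gamma_{\Gal}(s,{\rm As}\,\pi\otimes\chi_2,\psi).
\]

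The heart of the matter is the corresponding multiplicativity of the Piatetski-Shapiro-Rallis gamma factor:
\[
\gamma_{\rm PSR}(s,{\rm As}\,\pi\otimes\tau,\psi,\xi) = c(s)\cdot\gamma_{\rm RS}(s,{\rm As}\,\pi\otimes\chi_1,\psi,\xi)\,\gamma_{\rm RS}(s,{\rm As}\,\pi\otimes\chi_2,\psi,\xi),
\]
where $\gamma_{\rm RS}$ denotes the $n=1$ Rankin-Selberg (Flicker, \cite{F88}, \cite{F93}) gamma factor $\varepsilon_{\rm RS}(s,\cdot,\psi,\xi)\,L_{\rm RS}(1-s,\cdot^{\vee})\,L_{\rm RS}(s,\cdot)^{-1}$ and $c(s)$ is an explicit monomial which, in view of the computation below and the asserted formula, must be $c(s) = \big(\omega(2\xi^2)\,|2\xi^2|_F^{\,2s-1}\big)^{-2}$ with $\omega = \omega_\pi\vert_{F^\times}\cdot\omega_\tau$. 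To prove this identity I would insert into the Piatetski-Shapiro-Rallis local zeta integral a Jacquet-integral Whittaker vector for $\tau=\mathrm{Ind}_B^{\GL_2(F)}(\chi_1\boxtimes\chi_2)$ together with a standard section $f_s$ in the degenerate principal series entering the integral, and carry out the integration over the unipotent radical and the torus of $\GL_2(F)$; using the explicit form of the section along the embedding $\delta$ defining the integral, the integral splits, after a change of variables, into a product of two copies of Flicker's $n=1$ Rankin-Selberg integral for ${\rm As}\,\pi$ twisted by $\chi_1$ and by $\chi_2$. Comparing the functional equations of the two sides then yields the displayed relation, with $c(s)$ recording the Jacobians of the substitution and the twist of the additive character it produces; the powers of $2$ come from the incompatibility between $\delta$ (equivalently, the chosen $\psi$ and $\xi$) and the natural splittings. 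All manipulations are first performed in the range of absolute convergence of the zeta integrals and then continued meromorphically.

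Granting this, the theorem follows by applying Theorem A (through its $L$- and $\varepsilon$-identities, applied to $\pi,\tau$ and to their duals) to each factor on the right, namely
\[
\gamma_{\rm RS}(s,{\rm As}\,\pi\otimes\chi_i,\psi,\xi) = \omega_\pi(\xi)\,\chi_i(\xi^2)\,|\xi^2|_F^{\,s-1/2}\,\lambda_{E/F}(\psi)^{-1}\,\gamma_{\Gal}(s,{\rm As}\,\pi\otimes\chi_i,\psi),
\]
multiplying over $i=1,2$, and simplifying: $\omega_\pi(\xi)^2 = \omega_\pi(\xi^2) = \omega_\pi\vert_{F^\times}(\xi^2)$ because ${\rm tr}_{E/F}(\xi)=0$ forces $\xi^2\in F^\times$; $\chi_1(\xi^2)\chi_2(\xi^2) = \omega_\tau(\xi^2)$; $|\xi^2|_F^{\,2(s-1/2)} = |\xi^2|_F^{\,2s-1}$; and $\lambda_{E/F}(\psi)^2 = \omega_{E/F}(-1)$. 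Combining with the factorization of $\gamma_{\Gal}$, the constant $c(s)\cdot\omega(\xi^2)\,|\xi^2|_F^{\,2s-1}\,\omega_{E/F}(-1)$ collapses to $\omega(4\xi^2)^{-1}\,|4\xi^2|_F^{-2s+1}\,\omega_{E/F}(-1)$, which is exactly the asserted expression.

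The main obstacle is not conceptual but the explicit unfolding of the Piatetski-Shapiro-Rallis integral over a principal series and, above all, the precise determination of $c(s)$ -- the exact powers of $2$ and the shift in the $s$-variable produced by the embedding $\delta$ and by the normalizations of the section and the additive character -- which demands a careful, if routine, run through the calculation together with control of the meromorphic continuation. A secondary point needing care is the compatibility of the normalization of Flicker's $n=1$ zeta integral (as used in Theorem A) with the restriction of the $n=2$ integral to principal-series data, and the justification, from the integral representation itself, that $\gamma_{\rm PSR}$ is insensitive to passing to the generic constituent. One could alternatively run the comparison globally -- globalizing $F,E,\pi,\tau$ so that at every place but the given one the data is unramified or of principal-series type, and dividing the two global functional equations -- but this needs the global theory (meromorphic continuation and functional equation) of $L_{\rm PSR}$ and a matching globalization lemma, so the local route above is the more self-contained one.
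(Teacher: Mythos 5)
Your proposal follows essentially the same route as the paper (Ikeda's method): unfold the Piatetski-Shapiro--Rallis integral along the principal series inducing $\tau$, establish the multiplicativity $\gamma_{\rm PSR}(s,{\rm As}\,\pi\otimes\tau,\psi,\xi)=\omega(4\xi^4)^{-1}|4\xi^4|_F^{-2s+1}\,\gamma_{\rm RS}(s,{\rm As}\,\pi\otimes\chi_1,\psi,\xi)\,\gamma_{\rm RS}(s,{\rm As}\,\pi\otimes\chi_2,\psi,\xi)$ --- your $c(s)$ coincides with the paper's constant --- and then feed in Theorem A exactly as you do, with the same final simplification. The only caveat is one of execution rather than strategy: in the paper the two $n=1$ gamma factors are produced by successive functional equations mediated by $\GSp_3$ intertwining operators and auxiliary deformation parameters $(u_1,u_2)$ (needed because the natural convergence regions of the two sides do not overlap), not by a literal splitting of the zeta integral into a product of two Flicker integrals.
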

\begin{rmk}\label{R:1}\noindent
\begin{itemize}
\item[(1)] Let $n=1$. When $E=F\times F$, Theorem A was proved in \cite{Jac72} and \cite{Hen00}.  When $E$ is a field and $F$ is non-archimedean, the equality for $L$-factors follow from the results in \cite[Theorem 1.6]{AR05}, \cite[Section 1.5, Th\'eor\`eme]{Hen10}, and \cite[Theorem 4.2]{Mat10}. \item[(2)] Let $n=2$. When $E=F\times F$, Theorem B was proved in \cite[Theorem 3]{Ikeda1989}.
\end{itemize}
\end{rmk}

\begin{rmk}
Our theorems has an important application for construction of twisted triple product $p$-adic $L$-functions (\cite{Ish17}). We obtain an explicit interpolation formula for the twisted triple product $p$-adic $L$-function along Hida families through explicit computation of the local period integrals in Ichino's formula (\cite{Ich08}). Theorem A and Theorem B are essential to confirm the non-triviality and the good $p$-adical behavior of the local period integrals.
\end{rmk}

Combining Theorem B with \cite[Theorem 1.2]{WTG2008}, we can prove the dichotomy on trilinear forms.

\begin{thmc}[Corollary \ref{C:dichotomy of trilinear forms}]
Assume $\omega_{\pi}\vert_{F^{\times}}\cdot\omega_{ \tau}=1$. Then ${\rm Hom}_{\GL_2(F)}(\pi\otimes \tau,\C) \neq 0$ if and only if $$\omega_{E/F}(-1)\varepsilon_{\Gal}\left(\frac{1}{2},{\rm As}\,\pi\otimes \tau\right)=1.$$ 
Here $\omega_{E/F}$ is the quadratic character of $F^{\times}$ associated with $E/F$ by local class field theory.
\end{thmc}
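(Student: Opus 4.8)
The plan is to feed the explicit identity of Theorem~B into the local dichotomy of \cite[Theorem~1.2]{WTG2008}, which characterizes the (non)vanishing of $\Hom_{\GL_2(F)}(\pi\otimes\tau,\C)$ by a local sign --- the root number attached to $(\pi,\tau)$ by the Piatetski-Shapiro--Rallis zeta integrals, which in the normalization of Theorem~B is $\gamma_{\rm PSR}(\tfrac12,\As\,\pi\otimes\tau,\psi,\xi)$. The hypothesis $\omega_\pi\vert_{F^\times}\cdot\omega_\tau=1$ is used twice: it makes every normalizing factor in Theorem~B trivial at $s=\tfrac12$, and it forces the $8$-dimensional Weil--Deligne parameter $(r\circ\phi_\pi)\otimes\phi_\tau$ to be self-dual of symplectic type, so that the relevant root numbers are genuine signs independent of $\psi$ and $\xi$ --- which is what makes the notation $\varepsilon_{\Gal}(\tfrac12,\As\,\pi\otimes\tau)$ in the statement meaningful.

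First I would record the self-duality. Using $(\As\,\pi)^\vee\cong\As(\pi^\vee)$, the contragredient formula $\pi^\vee\cong\pi\otimes\omega_\pi^{-1}$ on $\GL_2(E)$, the formula $\As(\pi\otimes\mu)\cong(\As\,\pi)\otimes(\mu\vert_{F^\times})$ for characters $\mu$ of $E^\times$, and $\tau^\vee\cong\tau\otimes\omega_\tau^{-1}$ on $\GL_2(F)$, one gets
\[
\As\,\pi^\vee\otimes\tau^\vee\;\cong\;\bigl(\As\,\pi\otimes\tau\bigr)\otimes\bigl(\omega_\pi\vert_{F^\times}\cdot\omega_\tau\bigr)^{-1}\;\cong\;\As\,\pi\otimes\tau,
\]
the last isomorphism by the hypothesis; and an inspection of the invariant bilinear form shows $(r\circ\phi_\pi)\otimes\phi_\tau$ is of symplectic type. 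Consequently $\varepsilon_{\Gal}(\tfrac12,\As\,\pi\otimes\tau,\psi)\in\{\pm1\}$ and is independent of $\psi$, and --- the $L$-factors of $\As\,\pi\otimes\tau$ and of its contragredient being equal --- the formula for $\gamma_{\Gal}$ recalled in the introduction gives $\gamma_{\Gal}(\tfrac12,\As\,\pi\otimes\tau,\psi)=\varepsilon_{\Gal}(\tfrac12,\As\,\pi\otimes\tau)$.

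Next I would specialize Theorem~B to $s=\tfrac12$. There $|4\xi^2|_F^{-2s+1}=|4\xi^2|_F^{0}=1$, and $\omega(4\xi^2)^{-1}=1$ because $\omega=\omega_\pi\vert_{F^\times}\cdot\omega_\tau$ is trivial, so Theorem~B collapses to
\[
\gamma_{\rm PSR}\!\left(\tfrac12,\As\,\pi\otimes\tau,\psi,\xi\right)=\omega_{E/F}(-1)\,\gamma_{\Gal}\!\left(\tfrac12,\As\,\pi\otimes\tau,\psi\right)=\omega_{E/F}(-1)\,\varepsilon_{\Gal}\!\left(\tfrac12,\As\,\pi\otimes\tau\right),
\]
using the previous step for the last equality; in particular the left-hand side lies in $\{\pm1\}$ and is independent of $\psi$ and $\xi$. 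Since \cite[Theorem~1.2]{WTG2008} asserts that $\Hom_{\GL_2(F)}(\pi\otimes\tau,\C)\neq0$ precisely when this root number equals $1$, and since $\omega_{E/F}(-1)\in\{\pm1\}$, that condition is equivalent to $\omega_{E/F}(-1)\,\varepsilon_{\Gal}(\tfrac12,\As\,\pi\otimes\tau)=1$, which is the assertion.

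The real work here is bookkeeping rather than a new idea. One has to match the local sign of \cite[Theorem~1.2]{WTG2008} with $\gamma_{\rm PSR}(\tfrac12,\As\,\pi\otimes\tau,\psi,\xi)$ precisely, checking that under the hypothesis there is no residual discrepancy and that the dependence on the choices of $\psi$ and $\xi$ genuinely cancels. One should also make sure the hypotheses of \cite{WTG2008} cover every case considered here --- in particular $F$ archimedean, and the split case $E=F\times F$, where the statement must reduce to Prasad's classical trilinear dichotomy, consistently with $\omega_{E/F}(-1)=1$ --- and keep track of the genericity assumptions on $\pi$ and $\tau$ inherited from Theorem~B.
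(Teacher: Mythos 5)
Your argument works only in the cases where Theorem~B is actually available, and that is not all cases covered by the statement. Theorem~B is proved under the hypothesis that $\tau$ is a subquotient of a principal series representation, so you cannot simply ``specialize Theorem~B at $s=\tfrac12$'' when $F$ is non-archimedean and $\tau$ is supercuspidal. In particular the case where both $\pi$ and $\tau$ are supercuspidal --- precisely the case not already settled by Prasad \cite{Pra92}, and hence the only genuinely new content of the corollary --- is left unproved in your proposal. This is not a bookkeeping issue about genericity, as your last paragraph suggests, but a real gap: the comparison $\gamma_{\rm PSR}(s,\As\,\pi\otimes\tau,\psi,\xi)=|4\xi^2|_F^{-2s+1}\omega_{E/F}(-1)\gamma_{\Gal}(s,\As\,\pi\otimes\tau,\psi)$ (here $\omega=1$, and $L$-factors are $1$ in the supercuspidal case, so $\gamma=\varepsilon$) must be established by some other means before Gan's dichotomy \cite[Theorem 1.2]{WTG2008} can be converted into a statement about $\varepsilon_{\Gal}$.

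The paper closes exactly this gap: it quotes \cite{Pra92} for every case in which at least one of $\pi,\tau$ is not supercuspidal, and in the remaining both-supercuspidal case it proves the displayed gamma identity by a global-to-local argument --- globalize $E/F$ and choose cuspidal automorphic representations ${\bm\pi}_1,{\bm\pi}_2$ with the given local components at $v_0$ and spherical components at all other finite places (\cite[Proposition 5.1]{Sha90}); apply Theorem~B at every place $v\neq v_0$; invoke Krishnamurthy's isobaric Asai transfer \cite[Theorem 6.7]{Kri03} to get the global functional equation of $L_{\rm Gal}(s,\As\,{\bm\pi}_1\otimes{\bm\pi}_2)$; and compare it with the global functional equation for $L_{\rm PSR}$ to deduce the local identity at $v_0$. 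Your observations about the hypothesis $\omega=1$ killing the factors $\omega(4\xi^2)^{-1}|4\xi^2|_F^{-2s+1}$ at $s=\tfrac12$, the self-duality making $\varepsilon_{\Gal}(\tfrac12)$ a sign, and the $\omega_{E/F}(-1)$ normalization discrepancy with \cite{WTG2008} are all consistent with the paper, and your route does correctly handle the archimedean case and the non-archimedean cases with $\tau$ a principal-series subquotient; but without the globalization step (or some substitute) the corollary as stated is not proved.
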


\begin{rmk}\noindent
When $F$ is non-archimedean, except when both $\pi$ and $\tau$ are supercuspidal, the result is proved by Prasad in \cite{Pra92}. On the other hand,  when $F$ is archimedean, it follows from comparing the value of epsilon factors with a result of  Loke in \cite{Lok01}.
\end{rmk}

Now we switch to global situation. Let $F$ be a number field and $E$ be a quadratic semisimple $F$-algebra. Fix a non-trivial additive character $\psi$ of $\A_F/F$ and an element $\xi \in E^{\times}$ such that ${\rm tr}_{E/F}(\xi)=0$. Let $\pi$ and $ \tau$ be irreducible cuspidal automorphic representations of $\GL_2(\A_E)$ and $\GL_2(\A_F)$ with central characters $\omega_{\pi}$ and $\omega_{ \tau}$, respectively. Put $\omega = \omega_{\pi}\vert_{\A_F^{\times}}\cdot \omega_{ \tau}$. 

\begin{thmd}[Theorem \ref{T:twisted Asai factor}]
For each place $v$ of $F$, we have
\begin{align*}
L_{\rm PRS}(s,{\rm As}\,\pi_{v}\otimes \tau_{v}) &= L_{\rm Gal} (s,{\rm As}\,\pi_{v}\otimes \tau_{v}),\\
\varepsilon_{\rm PRS} (s,{\rm As}\,\pi_{v}\otimes \tau_{v},\psi_v,\xi) &= \omega_v(4\xi^2)^{-1}|4\xi^2|_{F_v}^{-2s+1}\omega_{E_v/F_v}(-1)\varepsilon_{\rm Gal} (s,{\rm As}\,\pi_{v}\otimes \tau_{v},\psi_v).
\end{align*}
\end{thmd}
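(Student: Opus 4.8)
The plan is to reduce the two identities, at every place $v$, to the local result of \thmref{T:twisted Asai gamma factor} (Theorem B) with the help of two global functional equations and a globalization argument. Since $\pi$ and $\tau$ are cuspidal, $\pi_v$ and $\tau_v$ are generic at every place, and $\tau_v$ is a subquotient of a principal series representation at every archimedean place (there every irreducible admissible representation is such) as well as at every finite place where $\tau_v$ is not supercuspidal. At all such $v$, Theorem B applies and yields the $L$-factor and $\varepsilon$-factor identities, in particular
\[
\gamma_{\rm PRS}(s,{\rm As}\,\pi_v\otimes\tau_v,\psi_v,\xi)=c_v(s)\,\gamma_{\rm Gal}(s,{\rm As}\,\pi_v\otimes\tau_v,\psi_v),\qquad c_v(s):=\omega_v(4\xi^2)^{-1}|4\xi^2|_{F_v}^{-2s+1}\omega_{E_v/F_v}(-1).
\]
Let $S$ be the finite set of finite places at which $\tau_v$ is supercuspidal; it remains to treat the places in $S$.

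Next I would invoke the global functional equations of the two $L$-functions attached to $({\rm As}\,\pi,\tau)$. The Piatetski-Shapiro--Rallis/Garrett integral representing the (twisted) triple product $L$-function of $\pi$ and $\tau$ (\cite{PSR1987}, \cite{Ikeda1989}, together with its $E/F$-twisted form) is Eulerian with local factors $L_{\rm PRS,v},\varepsilon_{\rm PRS,v}$ at all places, and its functional equation gives $\prod_v\gamma_{\rm PRS}(s,{\rm As}\,\pi_v\otimes\tau_v,\psi_v,\xi)=1$. On the other side, the automorphy of the Asai transfer of $\pi$ to an isobaric automorphic representation of $\GL_4(\A_F)$, combined with Rankin--Selberg theory for $\GL_4\times\GL_2$ (equivalently, the Langlands--Shahidi functional equation for the $\tau$-twisted Asai $L$-function), gives $\prod_v\gamma_{\rm Gal}(s,{\rm As}\,\pi_v\otimes\tau_v,\psi_v)=1$. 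Finally, ${\rm tr}_{E/F}(\xi)=0$ forces $\xi^2\in F^\times$, hence $4\xi^2\in F^\times$, so together with $-1\in F^\times$ the product formula gives $\prod_v c_v(s)=1$. Multiplying the three relations and cancelling the factors at $v\notin S$ using the local identity above, I obtain
\[
\prod_{v\in S}\gamma_{\rm PRS}(s,{\rm As}\,\pi_v\otimes\tau_v,\psi_v,\xi)=\prod_{v\in S}c_v(s)\,\gamma_{\rm Gal}(s,{\rm As}\,\pi_v\otimes\tau_v,\psi_v).
\]

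The main obstacle is to pass from this identity of products over $S$ to the identity at a single place $v_0\in S$. For this I would keep $\pi$ fixed and replace $\tau$ by a cuspidal automorphic representation $\tau'$ of $\GL_2(\A_F)$ with $\tau'_{v_0}\cong\tau_{v_0}$ and with $\tau'_v$ not supercuspidal at every other finite place; such a $\tau'$ exists by a standard globalization of supercuspidal representations of $\GL_2$ (via Poincar\'e series or the simple trace formula), at worst requiring $\tau'$ to be Steinberg at one auxiliary finite place $v_1$, which is again a place where Theorem B applies. Running the previous paragraph with $\tau'$ in place of $\tau$, the bad set becomes $\{v_0\}$ or $\{v_0,v_1\}$; stripping off the $v_1$-factor by Theorem B in the latter case, the product relation collapses to
\[
\gamma_{\rm PRS}(s,{\rm As}\,\pi_{v_0}\otimes\tau_{v_0},\psi_{v_0},\xi)=c_{v_0}(s)\,\gamma_{\rm Gal}(s,{\rm As}\,\pi_{v_0}\otimes\tau_{v_0},\psi_{v_0}),
\]
since all quantities involved depend only on $\pi_{v_0}$ and $\tau_{v_0}$. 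Hence the gamma-factor identity holds at every place of $F$.

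It remains to separate the $L$-factors from the $\varepsilon$-factors at the places of $S$. There both $L_{\rm PRS,v}$ and $L_{\rm Gal,v}$ are inverses of polynomials in $q_v^{-s}$ with constant term $1$ whose poles, by the known bounds toward the Ramanujan conjecture for cuspidal $\GL_2$-representations (which keep the Langlands parameters of $\pi_v$ and $\tau_v$ small enough), are confined for $L_\bullet(s,{\rm As}\,\pi_v\otimes\tau_v)$ and for $L_\bullet(1-s,{\rm As}\,\pi_v^\vee\otimes\tau_v^\vee)$ to disjoint half-planes, while $c_v(s)$ and the $\varepsilon$-factors are monomials; comparing zeros and poles in the gamma-factor identity then forces $L_{\rm PRS}(s,{\rm As}\,\pi_v\otimes\tau_v)=L_{\rm Gal}(s,{\rm As}\,\pi_v\otimes\tau_v)$, and the $\varepsilon$-factor identity of Theorem D follows. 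This last step, together with the verification that the two global integrals are Eulerian with the expected local factors, is routine; the substantive inputs are the two global functional equations and the globalization of $\tau_{v_0}$.
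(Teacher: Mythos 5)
Your proposal is correct and takes essentially the same route as the paper's proof: Theorem B together with the no-common-pole separation (using $\Lambda({\rm As}\,\pi_v\otimes\tau_v)<1/2$ from \cite{KM2002}, i.e.\ Corollary \ref{C:local factors for twisted Asai}) handles every place where $\tau_v$ is a subquotient of a principal series, and at a finite place where $\tau_{v_0}$ is supercuspidal the paper likewise globalizes $\tau_{v_0}$ (via \cite[Proposition 5.1]{Sha90}, which even gives spherical components at all other finite places, so no auxiliary Steinberg place is needed) and compares the global functional equation of the Piatetski-Shapiro--Rallis $L$-function with the one obtained from the isobaric Asai transfer \cite{Kri03} and Rankin--Selberg theory to isolate the gamma identity at $v_0$, then separates $L$ from $\varepsilon$ by the same pole argument. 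The only caveat is that Theorem B alone yields just the gamma-factor identity, so the pole-separation step (and the identification of the Galois gamma factors with the Rankin--Selberg ones at finite places, cited in the paper via \cite{BH1999} and \cite{JPSS83}) is also needed at the principal-series places, exactly as you describe for the set $S$.
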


\begin{rmk}\noindent
When $E=F\times F$, Theorem D was proved in \cite[Theorem 4.4.1]{Rama2000} as an application of the functoriality of $\pi$ to $\GL_4(\A_F)$. 
\end{rmk}

\subsection{Structure of the article}

The proof for Theorem A is separated into two cases according to $F$ is archimedean or not. 

In \S\,\ref{S:2}, we consider the case when $F$ is non-archimedean. In \S\,\ref{SS:2.1}, we recall the definition of Asai local factors defined by local zeta integrals and state the main results. To prove Theorem A, the problem boils down to two logically independent problems: (i) the case when is supercuspidal, (ii) multiplicative of gamma factors. By a standard global-to-local argument in Corollary \ref{equality of epsilon factors}, the case when $\pi$ is supercuspidal follow from the functoriality of global Asai transfer to $\GL_4$ proved in \cite{Kri03} and standard unramified calculation of local zeta integrals. The main innovation is the proof of multiplicative of gamma factors which is given in \S\,\ref{SS:2.2}. 

In $\S\ref{S3}$, we consider the case when $F$ is archimedean. Indeed,  we develop the theory concerning the analytic properties of the zeta integrals associated to the Asai representation for the case $E=\C$ and $F=\R$, and prove analogy results (Cf. \thmref{T:main theorem}) to that of \cite[Theorem 17.2]{Jac72} and \cite[Theorem 2.1]{Jac09}. To the best of our knowledge, unlike the non-archimedean case,  the analytic properties of the zeta integrals for this case can not be found in the literatures.

The proofs for Theorems B-D are given in \S\,\ref{S:4}. In \S\,\ref{S:4.1} and \S\,\ref{S:4.2}, we recall some notation and definitions of the intertwining operators on $\GSp_3$ and good sections for induced representations on $\GSp_3$. In \S\,\ref{S:4.3}, we recall the definition of twisted Asai local factors defined by local zeta integrals and state the main results. The proof of Theorems C and D are based on global-to-local arguments, the functoriality of global Asai transfer to $\GL_4$, and Theorem B. The proof of Theorem B is given by \S\,\ref{S:4.4}. We give a detailed proof following the idea of Ikeda in \cite[Theorem 3]{Ikeda1989}. Finally, in the Appendix, we prove some results which are used in the proofs in 
\secref{S:4}.

\subsection{Notation}
Denote $B$ the standard Borel subgroup of $\GL_2$ consisting of upper triangular matrices and $U$ be its unipotent radical. 

Let $F$ be a local field of characteristic zero. When $F$ is non-archimedean, denote $\mathcal{O}_F$ the ring of integers of F, $\varpi_F$ a prime element, and ${\rm ord}_F$ be the valuation on $F$ normalized so that ${\rm ord}_F(\varpi_F)=1$.  Let $|\cdot|_F$ be the absolute value on $F$ normalized so that $|\varpi_F|_F^{-1}$ is equal to the cardinality of $\mathcal{O}_F / \varpi_F\mathcal{O}_F$. When $F$ is archimedean, let $|\cdot|=|\cdot|_{\R}$ be the usual absolute value on $\R$ and $|z|_\C=z\overline{z}$ on $\C$. For a finite dimensional vector space $V$ over $F$, denote $\mf{S}(V)$ the space of Bruhat-Schwartz functions on $V$.

An additive character $\psi$ of $F$ means a continuous homomorphism $\psi : F \rightarrow \C^{\times}$. For $a \in F^{\times}$, let $\psi^a$ be the additive character defined by $\psi^a(x)=\psi(ax)$. When $F$ is non-archimedean, we denote $c(\psi)$ the smallest integer such that $\psi$ is trivial on $\varpi_F^{c(\psi)}\mathcal{O}_F$.  

A character $\chi$ of $F^{\times}$ means a continuous homomorphism $\chi : F^{\times}\rightarrow \C^{\times}$. When $F$ is non-archimedean, we denote $c(\chi)$ the smallest non-negative integer such that $\chi$ is trivial on $1+\varpi_F^{c(\chi)}\mathcal{O}_F$. Denote
$$L(s,\chi),\quad \varepsilon(s,\chi,\psi),\quad \gamma(s,\chi,\psi)$$  
the local factors of a character $\chi$ of $F^{\times}$ with respect to an additive character $\psi$ of $F$ defined in \cite{Tate1979}. When $\chi=1$, we denote $\zeta_F(s)=L(1,s)$. Then 
$$\zeta_F(s)=\begin{cases}(1-|\varpi_F|_F^s)^{-1} & \mbox{ if $F$ is non-archimedean},\\
\pi^{-s/2}\Gamma(s/2) & \mbox{ if $F=\R$},\\
2(2\pi)^{-s}\Gamma(s) & \mbox{ if $F=\C$}.
\end{cases}$$
Here $\Gamma(s)$ is the gamma function.

All representations of the $F$-points of a linear algebraic group over $F$ is assumed to be smooth. Let $\pi$ be a representation of $\GL_2(F)$ with finite length. Let $\psi$ be an additive character of $F$. The space of Whittaker functionals of $\pi$ with respect to $\psi$ has dimension less than or equal to one. When non-zero Whittaker functional exist, we denote $\ms{W}(\pi, \psi)$ to be the corresponding space of Whittaker functions. Recall that $\ms{W}(\pi, \psi)$ consists of smooth functions $W : \GL_2(F)\rightarrow \C$ such that
$$W\left(\bp 1 & x \\ 0 & 1 \ep g \right)=\psi(x)W(g)$$
for $x \in F$ and $g \in \GL_2(F)$. Moreover the right translation $\rho$ of $\GL_2(F)$ on $\ms{W}(\pi, \psi)$ is equivalent to $\pi$.  Recall that when $F$ is non-archimedean, smoothness means locally constant. Let $K$ be a maximal compact
subgroup of $\GL_2(F)$.  Denote by $\ms{W}(\pi,\psi)_0$ the subspace of right $K$-finite functions. Note that $\ms{W}(\pi,\psi)_0=\ms{W}(\pi,\psi)$ when $F$ is non-archimedean.

Let $\mu$ and $\nu$ be characters of $F^{\times}$. We recall the definition of the principal series representation ${\rm Ind}_{B(F)}^{\GL_2(F)}(\mu,\nu)$ of $\GL_2(F)$. Denote $\mathcal{B}(\mu,\nu)$ the space of smooth functions $f : \GL_2(F) \rightarrow \C$ such that
$$f\left(\bp a & b \\ 0 & d \ep g\right)=\mu(a)\nu(d)\left|\frac{a}{d}\right|_F^{{1}/{2}}f(g),$$
for $\bp a & b \\ 0 & d \ep \in B(F)$ and $g \in \GL_2(F)$. Denote $\rho$ the right translation of $\GL_2(F)$ on $\mathcal{B}(\mu,\nu)$. The representation $(\rho,\mathcal{B}(\mu,\nu))$ of $\GL_2(F)$ is denoted by ${\rm Ind}_{B(F)}^{\GL_2(F)}(\mu,\nu)$. Similarly, let $\mathcal{B}(\mu,\nu)_0$ be the subspace of right $K$-finite functions. Assume $F$ is non-archimedean, for $f \in \mathcal{B}(\mu,\nu)$, define $W_{\psi,f}\in \ms{W}(\pi, \psi)$ by
$$W_{\psi,f}(g) := \lim_{n\rightarrow\infty}\int_{\varpi_F^{-n}\intring{F}}f\left(\!\mat(0,-1,1,0)\mat(1,x,0,1)g\!\right)\psi(-x)\,dx.$$

\section{Asai local factors for the non-archimedean case}\label{S:2}
In this section, we consider the case $n=1$. Let $F$ be a non-archimedean local field of characteristic zero and $q$ be the cardinality of the residual field of $F$. Fix a non-trivial additive character $\psi$ of $F$.

\subsection{Asai local factors via Rankin-Selberg integrals}\label{SS:2.1}

Let $E$ be a quadratic semisimple $F$-algebra, namely, $E$ is equal to a quadratic extension field over $F$ or $F\times F$. If $E=F\times F$, we embed $F$ into $E$ via the diagonal embedding.  Fix an element $\xi \in E^{\times}$ such that ${\rm tr}_{E/F}(\xi)=0$. Let $\psi_{\xi}$ be an additive character of $E$ defined by $\psi_{\xi}(x) = \psi(\tr_{E/F}(\xi x))$.

Let $\pi$ be an irreducible generic representation of $\GL_2(E)$ with central character $\omega$. For any $\Phi\in \mf{S}(F^2)$ and $W\in \ms{W}(\pi,\psi_\xi)$, we define a function on $s\in\cpxnum$ by
\begin{equation}
Z(s,W,\Phi):=\int_{U(F)\backslash \GL_2(F)} W(g) \, \Phi((0,1)g) \, |\det(g)|_{F}^s \, dg. \label{zeta integral}
\end{equation}
where we normalize the invariant measure so that $\vol(\GL_2(\intring{F}),dg)=1$.  
We note that $Z(s,W,\Phi)$ converges absolutely for sufficiently large ${\rm Re}(s)$, and is analytically continued to the whole complex plane as a meromorphic function.  
Moreover, it is an element of $\cpxnum[q^{s}, q^{-s}]$. 
The $\cpxnum$-vector space generated by $Z(s,W,\Phi)$'s for $W\in\ms{W}(\pi,\psi_\xi)$ and $\Phi\in\mf{S}(F^2)$ is actually a fractional ideal of $\cpxnum[q^{s}, q^{-s}]$ containing $1$.  Thus, there exists $P(X)\in\cpxnum[X]$ with $P(0)=1$ such that $P(q^{-s})^{-1}$ is a generator of this ideal (see \cite[p.801]{Kab04} or \cite[Appendix, Theorem]{F93}).  

We define the {\em Asai $L$-function} by
\[L_{\rm RS}(s,\As\pi) := \frac{1}{P(q^{-s})}.\]
More generally, for any character $\chi\colon F^\times \rightarrow \cpxnum^\times$, we define
\[L_{\rm RS}(s, \As\pi\otimes\chi) :=L_{\rm RS}(s, \As(\pi\otimes\nami{\chi}))\]
where $\nami{\chi}\colon E^\times\rightarrow\cpxnum^\times$ is a character such that 
\[\nami{\chi}|_{F^\times}=\chi.\]
We note that this definition is independent of the choice of $\nami{\chi}$.
This function satisfies the following functional equation (see \cite[Appendix, Theorem]{F93}): For any $W\in \ms{W}(\pi, \psi_\xi)$, we have
\begin{align}
\label{FEasai}
\frac{Z(1-s,W\otimes \chi^{-1}\omega^{-1},\yama{\Phi})}{L_{\rm RS}(1-s,\As\pi^\vee\otimes\chi^{-1})}=
\varepsilon_{\rm RS}(s,\As\pi\otimes\chi, \psi, \xi)\frac{Z(s,W\otimes\chi,\Phi)}{L_{\rm RS}(s,\As\pi\otimes\chi)},
\end{align}
where there exists $c\in\cpxnum^\times$ and $m\in\ratint$ depending only on $\pi$, $\psi$, and $\xi$ such that 
\[\varepsilon_{\rm RS}(s,\As\pi\otimes\chi, \psi, \xi):=cq^{-ms},\]
and
\begin{equation}\label{fourier transform}
\yama{\Phi}(x,y):=\int_{F\times F}\Phi(u,v)\psi(uy-vx) \, du \, dv.
\end{equation}
Here $du\,dv$ is the {\em self-dual} measure associated with $F\times F\rightarrow\cpxnum; (x,y)\mapsto \psi(x+y)$. We note that the epsilon factor $\varepsilon_{\rm RS}(s,\As\pi,\psi,\xi)$ is the same as $\varepsilon(s,r(\pi),\psi_\xi)$ appeared in \cite[Appendix, Theorem]{F93} and different from $\varepsilon(s, \pi, \As, \psi_\xi)$ only by $\omega(-1)$ appeared in \cite[Theorem 3]{Kab04}.  In the case that $\chi$ is trivial, we denote  $\varepsilon_{\rm RS}(s, \As\pi\otimes\chi, \psi, \xi)$ by just $\varepsilon_{\rm RS}(s, \As\pi, \psi, \xi)$. Put
$$\gamma_{\rm RS}(s,{\rm As}\,\pi \otimes \sigma,\psi,\xi) = \varepsilon_{\rm RS}(s,{\rm As}\,\pi\otimes \sigma,\psi,\xi)L_{\rm RS}(1-s, {\rm As}\,\pi^{\vee} \otimes \sigma^{\vee})L_{\rm RS}(s, {\rm As}\,\pi \otimes \sigma)^{-1}.$$
\begin{rmk}\noindent\label{epsilon for split}
\begin{itemize}
\item[(1)] In the case of $E=F\times F$, let $\xi=(\xi_0, -\xi_0)$ and $\pi=\pi_1\otimes\pi_2$ where $\pi_i$ is an irreducible generic representation of $\GL_2(F)$ with central character $\omega_i$.  The definition of Asai $L$-function is the same as that defined in \cite[Theorem 14.8, (1)]{Jac72}.  With the epsilon factors, the relation of $\varepsilon_{\rm RS}$ and one defined in \cite[Theorem 14.8, (3)]{Jac72} is given as follows:
\begin{align*}
\varepsilon_{\rm RS}(s, \As\pi, \psi, \xi)&=\omega_2(-1)\omega(\xi_0)|\xi_0|_F^{2s-1}\varepsilon(s, \pi_1\otimes\pi_2, \psi)\\
&=\omega(\xi)|\xi|_E^{s-1/2}\varepsilon(s, \pi_1\otimes\pi_2, \psi)
\end{align*}
where the epsilon factor $\varepsilon(s, \pi_1\otimes\pi_2, \psi)$ in the right hand side is defined in \cite[Theorem 14.8, (3)]{Jac72}. 
\item[(2)] If we drop the assumption that $\pi$ is irreducible, but assume that $\pi$ has finite length and
the space of Whittaker functionals of $\pi$ with respect to $\psi_{\xi}$ is one-dimensional. 
Then, by \cite[Theorem 1]{Kab04}, we can define Asai local factors for $\pi$ following the same definition. Moreover, if $\pi'$ is any irreducible generic subquotient of $\pi$, then
$$\gamma_{RS}(s,{\rm As}\,\pi',\psi,\xi)=\gamma_{RS}(s,{\rm As}\,\pi,\psi,\xi).$$
\end{itemize}
\end{rmk}
We note that for any $a\in F^\times$, we have
\begin{align}\label{E:dependence on psi}
\begin{split}\
\varepsilon_{\rm RS}(s,\As\pi,\psi^a, \xi)&=\omega^2(a) \, |a|_{F}^{4s-2} \, \varepsilon_{\rm RS}(s,\As\pi,\psi, \xi),\\
\varepsilon_{\rm RS}(s, \As\pi,\psi, a\xi)&=\omega(a) \, |a|_{F}^{2s-1} \, \varepsilon_{\rm RS}(s,\As\pi,\psi, \xi).
\end{split}
\end{align}

 Let $\sigma$ be the non-trivial automorphism of $E$ over $F$. If $\chi$ is a character of $E^{\times}$, let $\chi^{\sigma}$ be a character of $E^{\times}$ defined by $\chi^{\sigma}(x)=\chi(x^{\sigma})$.
 
Following theorem is the main result of this section.
\begin{thm}[Multiplicativity of gamma factors]\label{multiplicativity for ps}
Assume $E$ is a field, and $c(\psi)=c(\psi_{\xi})=0$. Let $\pi$ be an irreducible generic subquotient of a principal series representation ${\rm Ind}_{B(E)}^{\GL_2(F)}(\mu,\nu)$. We have
\begin{align*}
\gamma_{\rm RS}(s, \As\pi, \psi, \xi)&=\nu(-1)\gamma(s,\mu|_{F^\times},\psi)\gamma(s, \nu|_{F^\times}, \psi)\gamma(s, \mu\nu^\sigma, \psi_\xi).
\end{align*}
\end{thm}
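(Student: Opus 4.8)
The plan is to reduce the Asai gamma factor of the principal series $\pi = \mathrm{Ind}_{B(E)}^{\GL_2(E)}(\mu,\nu)$ to a product of Tate gamma factors by an explicit computation of the Rankin--Selberg zeta integral \eqref{zeta integral} on carefully chosen Whittaker functions and Schwartz functions. Since the gamma factor is unchanged when $\pi$ is replaced by any irreducible generic subquotient of the same principal series (Remark \ref{epsilon for split}(2)), I would work with the full induced representation $\pi = \B(\mu,\nu)$, which has the advantage that $\ms{W}(\pi,\psi_\xi)$ admits a convenient integral model: Whittaker functions obtained from the Jacquet integral applied to flat sections $f_s \in \cB(\mu,\nu)$.

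First I would fix a section $f \in \cB(\mu,\nu)$ supported on the big cell and set up the Whittaker function $W_f \in \ms{W}(\pi,\psi_\xi)$ via the Jacquet integral over $U(E)$, then unfold the zeta integral. The key manipulation is to use the Iwasawa decomposition $\GL_2(F) = B(F) K$ together with the fact that $U(F) \backslash \GL_2(F)$ integration, after inserting $W_f$, collapses the $U(E)$-integral from the Jacquet construction against the $U(F)$-quotient; what remains is an integral over the torus $T(F) = (F^\times)^2$ (times a compact part contributing a constant) of $\mu,\nu$ evaluated on the diagonal entries, weighted by $\Phi$ on the bottom row and by $|\det|_F^s$. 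Choosing $\Phi$ to be a product of characteristic functions of small neighborhoods, I expect the zeta integral to factor, up to an elementary volume constant, as a product of a $\GL_1(F)$ Tate integral for $\mu|_{F^\times}$, one for $\nu|_{F^\times}$, and a residual $\GL_1(E)$-type integral that produces the Tate gamma factor of $\mu\nu^\sigma$ against $\psi_\xi$. The functional equation \eqref{FEasai} then converts this into the stated identity on gamma factors, with the sign $\nu(-1)$ emerging from the long Weyl element $\mat(0,-1,1,0)$ appearing in the Fourier transform \eqref{fourier transform} and in relating $W_f$ for $\pi$ to $W$ for $\pi^\vee$; concretely, $\nu(-1)$ is the value picked up when commuting $\mat(-1,0,0,-1) = \mat(0,-1,1,0)^2$ (or the relevant diagonal sign) past the section.

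An alternative, and perhaps cleaner, route is to first establish the split-case analogue of the identity — which by Remark \ref{epsilon for split}(1) amounts to the classical factorization $\gamma(s,\pi_1\otimes\pi_2,\psi)$ for induced $\pi_i$ as in \cite{Jac72} — and then deduce the field case by a global argument: realize $\mu,\nu$ as local components of Hecke characters of $E$, use a global functional equation for the Asai $L$-function together with the already-known archimedean and unramified computations, and isolate the local factor at the chosen place. I would lean toward the direct local computation, using the global method only as a consistency check, since the direct method also pins down the epsilon constant.

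The main obstacle I anticipate is the bookkeeping at the ramified/non-self-dual aspects of the torus integral: ensuring the cross-term integral genuinely yields $\gamma(s,\mu\nu^\sigma,\psi_\xi)$ rather than some twist of it, and tracking all normalizing constants (the measure normalization $\vol(\GL_2(\intring F),dg)=1$, the self-dual measure in \eqref{fourier transform}, and the hypothesis $c(\psi)=c(\psi_\xi)=0$ which should make the residual volume factors equal to $1$) so that no spurious powers of $q$ or constants survive. The role of $c(\psi_\xi)=0$ is precisely to guarantee that the ``$\mu\nu^\sigma$ against $\psi_\xi$'' sub-integral is in its unramified normalization, so that the factorization is clean; handling the interplay between the $F$-structure and the $E$-structure in that sub-integral — essentially restricting a $\GL_2(E)$ Whittaker model to $\GL_2(F)$ — is where the computation requires genuine care rather than routine unfolding.
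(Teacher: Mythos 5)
Your overall framework---evaluating the zeta integral \eqref{zeta integral} on explicitly chosen data and reading the gamma factor off the functional equation \eqref{FEasai}, after replacing $\pi$ by the full induced representation via Remark \ref{epsilon for split}-(2)---is indeed the paper's framework. But the step that carries all the weight in your plan, namely the assertion that for $\Phi$ a product of characteristic functions of small neighborhoods the zeta integral factors, up to a volume constant, into a Tate integral for $\mu|_{F^\times}$, one for $\nu|_{F^\times}$, and an $E^\times$-integral yielding $\gamma(s,\mu\nu^\sigma,\psi_\xi)$, is precisely what you have not proved, and it is not a routine unfolding: the Jacquet integral runs over $U(E)$, which is two-dimensional over $F$, while only the $U(F)$-part is absorbed by the quotient in \eqref{zeta integral}, and for general data the leftover $E$-versus-$F$ integral does not separate from the torus integral. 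In the paper this is serious enough to force a case division on the ramification of $\mu|_{F^\times}$ and $\nu|_{F^\times}$, and in the hardest case (both ramified) the decoupling is achieved only through a specific mechanism: the character $\nu^{-1}$ is built into the first variable of $\Phi$ so that \eqref{FEasai} reduces to the identity \eqref{main formula 2}; the resulting integrals are regularized by truncations $I_{m,n}$ and transformed by the substitutions $z\mapsto zy$, $y\mapsto y/(zz^{\sigma})$, $z\mapsto 1/z$ as in \eqref{E:2.2.5}, which is what actually extracts the Tate $\varepsilon$-factors of $\nu|_{F^\times}$ (and, on the dual side, of $\mu|_{F^\times}$); and the section $f$ is chosen so that its restriction to the big cell is a function $\Phi_{\eta,M}$ on $E$ satisfying $\widehat{\Phi}_{\eta,M}(z)=\Phi_{\eta,M}(z^{\sigma})$ with respect to $\psi_{\xi}$, so that Tate's functional equation over $E$ produces $\gamma(s,\mu\nu^{\sigma},\psi_{\xi})$. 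You yourself flag the ``interplay between the $F$-structure and the $E$-structure'' as the point needing genuine care; that is exactly the missing idea, and without something playing the role of (b) and (c) above the computation does not close.

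Two further points. First, to divide by $Z(s,W,\Phi)$ in \eqref{FEasai} you must prove non-vanishing of your chosen zeta integral; in the ramified case this is not automatic, and the paper settles it by an explicit Gauss-sum evaluation (the integral $\int\psi(\eta^{2}a)\,\mu\nu^{\sigma}(a+\theta)\,da$ reducing to $\varepsilon(0,\omega^{-1}|_{F^\times},\psi)$ or $\zeta_F(1)^{-1}$); your proposal never addresses this. Second, the global route you mention as a fallback does not substitute for the local computation here: globalizing $\mu,\nu$ produces a non-cuspidal (Eisenstein) representation, to which the global Flicker--Asai integral and its functional equation, as used in Corollary \ref{equality of epsilon factors}, do not directly apply; in the paper the global argument is reserved for supercuspidal $\pi$ and takes Theorem \ref{multiplicativity for ps} as input at the auxiliary places, so using it to prove the principal-series case would require extra care to avoid circularity.
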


\begin{corollary}\label{equality of epsilon factors}
We have
 \begin{align*}
\varepsilon_{\rm RS}(s,{\rm As}\,\pi,\psi,\xi) &= \omega(\xi)|\xi^2|_F^{s-1/2}\lambda_{E/F}(\psi)^{-1}\varepsilon_{\rm Gal}(s,{\rm As}\,\pi,\psi)
 \end{align*}
Here $\lambda_{E/F}(\psi)$ is the Langlands constant for $E/F$ with respect to $\psi$. 
\end{corollary}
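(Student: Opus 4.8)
The plan is to deduce Corollary \ref{equality of epsilon factors} from Theorem \ref{multiplicativity for ps} by a standard reduction argument that splits into the case of a generic subquotient of a principal series and the supercuspidal case, and in the former case reduces to matching the right-hand side of Theorem \ref{multiplicativity for ps} against the Galois-side factorization of the Asai parameter. First I would treat the principal series case: if $\pi$ is an irreducible generic subquotient of $\B(\mu,\nu)$, then on the Galois side the parameter $\phi_\pi$ is (up to semisimplification, which does not affect $\varepsilon$-factors) the induction from $W_E'$ of $\mu\boxplus\nu$, and a direct computation with the Asai representation $r$ of \eqref{E:Asai representation} gives the factorization
\begin{align*}
\varepsilon_{\rm Gal}(s,\As\pi,\psi) = \varepsilon(s,\mu|_{F^\times},\psi)\,\varepsilon(s,\nu|_{F^\times},\psi)\,\varepsilon(s,\mu\nu^\sigma,\psi_\xi)\cdot(\text{correction from }\lambda_{E/F}(\psi)),
\end{align*}
where the Langlands constant $\lambda_{E/F}(\psi)$ enters precisely because the Asai induction from $W_E'$ to $W_F'$ of the "tensor-over-$F$" piece $\mu\nu^\sigma$ involves inducing characters of $E^\times$ and one must pass from $\varepsilon$-factors over $E$ (with respect to $\psi_\xi$) to $\varepsilon$-factors over $F$; the inductivity of local constants and the defining relation for $\lambda_{E/F}$ produce the single factor $\lambda_{E/F}(\psi)^{-1}$. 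Comparing this with Theorem \ref{multiplicativity for ps} after converting $\gamma$-factors to $\varepsilon$-factors (using the matching of $L$-factors, which one gets from the known equality of $L_{\rm RS}$ with $L_{\rm Gal}$ for principal series, together with the $L$-factor part of Theorem A proved elsewhere) yields the claimed identity for principal series $\pi$; the sign $\nu(-1)$ and the monomial $\omega(\xi)|\xi^2|_F^{s-1/2}$ get absorbed using \eqref{E:dependence on psi} and the explicit behavior of $\varepsilon(s,\mu\nu^\sigma,\psi_\xi)$ under the $\psi\mapsto\psi_\xi$, $\xi\mapsto a\xi$ changes.

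For the supercuspidal case, I would invoke a global-to-local argument: embed a given supercuspidal $\pi$ of $\GL_2(E)$ as the local component at one place of a cuspidal automorphic representation, use the functoriality of the global Asai transfer to $\GL_4$ established in \cite{Kri03} to compare the completed $L$- and $\varepsilon$-factors, and then cancel all the remaining places using the principal series case already established together with the archimedean results of Theorem A (proved in \S\ref{S3}). Since both $\varepsilon_{\rm RS}$ and $\varepsilon_{\rm Gal}$ are, up to the explicit monomial $\omega(\xi)|\xi^2|_F^{s-1/2}\lambda_{E/F}(\psi)^{-1}$, of the form $cq^{-ms}$, matching them at all-but-one place forces the identity at the remaining place; this is exactly the "standard global-to-local argument" alluded to in the structure section, and it gives Corollary \ref{equality of epsilon factors} in full generality.

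The main obstacle I expect is the bookkeeping on the Galois side: one must pin down precisely how the Asai representation \eqref{E:Asai representation} decomposes the induced parameter, in particular that the "tensor" part contributes $\varepsilon(s,\mu\nu^\sigma,\psi_\xi)$ (with $\psi_\xi$, not $\psi$, and with $\mu\nu^\sigma$ rather than $\mu^\sigma\nu$ or $\mu\nu$), and to keep careful track of the normalization of $\psi_\xi$ and of the self-dual measures so that exactly one copy of $\lambda_{E/F}(\psi)$ — and no stray power of $|\xi|$ or sign $\omega_{E/F}(-1)$ — survives. A secondary technical point is ensuring the reduction is not circular: the $L$-factor equality $L_{\rm RS}=L_{\rm Gal}$ used to pass from Theorem \ref{multiplicativity for ps} (stated for $\gamma$) to the $\varepsilon$-statement must be available independently, which it is by Remark \ref{R:1}(1) and the cited results for the field case, so the logic closes.
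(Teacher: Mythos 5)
Your proposal is correct and follows essentially the same route as the paper: for principal-series subquotients it matches Theorem \ref{multiplicativity for ps} against the Galois-side factorization of the Asai parameter (the paper does this via \cite[Lemma 7.1 (d)]{Pra92}, reducing to $c(\psi)=c(\psi_\xi)=0$ by \eqref{E:dependence on psi}), and for supercuspidal $\pi$ it runs the same global-to-local argument using \cite[Proposition 5.1]{Sha90}, the isobaric Asai transfer of \cite{Kri03}, and the already-established identity at all auxiliary places. Your point about non-circularity — passing between $\gamma$- and $\varepsilon$-statements via the independently known $L$-factor equality of Remark \ref{R:1}(1) — is exactly how the paper closes the logic.
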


\begin{proof}
As we mentioned in Remarks \ref{R:1}-(1) and \ref{epsilon for split}-(1), the equality for $\varepsilon$-factors is equivalent to the equality for $\gamma$-factors and it is  suffices to consider the case when $E$ is a field.

First we assume $\pi$ be an irreducible generic subquotient of a principal series representation ${\rm Ind}_{B(E)}^{\GL_2(F)}(\mu,\nu)$. By (\ref{E:dependence on psi}), we may assume  $c(\psi)=c(\psi_{\xi})=0$. By the property of Asai representation defined by (\ref{E:Asai representation}) (Cf. \cite[Lemma 7.1 (d)]{Pra92}), we have
\begin{align*}
\gamma_{\Gal}(s,\, \As\pi,\, \psi)
&= \lambda_{E/F}(\psi)\gamma(s,\, \mu|_{F^\times},\, \psi)\gamma(s,\, \nu|_{F^\times},\, \psi)\gamma(s,\, \mu\nu^\sigma,\, \psi\circ\tr_{E/F})\\
&=\nu(-1)\omega(\xi)^{-1}|\xi^2|_F^{1/2-s}\gamma(s,\, \mu|_{F^\times},\, \psi)\gamma(s,\, \nu|_{F^\times},\, \psi)\gamma(s,\, \mu\nu^\sigma,\, \psi_\xi).
\end{align*}
The last equality follows from the fact that $|\xi|_{E}=|\varpi_{F}|_{F}^{-c(\omega_{E/F})}$, since we assume that $c(\psi)=c(\psi_\xi)=0$. The assertion then follows from Theorem \ref{multiplicativity for ps}.

Assume $\pi$ is a supercuspidal representation. Let ${\bf E}/\bfb{F}$ be a quadratic extension of number fields such that there exist a finite place $v_0$ of $\bf F$ such that ${\bf E}_{v_0}=E$ and ${\bf F}_{v_0}=F$. Fix a non-trivial additive character ${\bm \psi}$ of $\A_{\bf F}/{\bf F}$ and an element $\bm{ \xi} \in {\bf E}^{\times}$ such that ${\rm tr}_{{\bf E}/{\bf F}}({\bm \xi})=0$. By \cite[Proposition 5.1]{Sha90}, there exist an irreducible cuspidal automorphic representation $\bm \pi$ of $\GL_2(\A_{\bf E})$ such that
\begin{itemize}
\item ${\bm \pi}_{v_0}=\pi$.
\item ${\bm \pi}_{v}$ is spherical for any finite place $v \neq v_0$.
\end{itemize}
Denote $\bm \omega$ the central character of $\bm \pi$. By Theorem \ref{multiplicativity for ps} and (\ref{E:relation between epsilon factors}), for all places $v \neq v_0$ of {\bf F}, we have
\begin{align}\label{E:2.1.4}
\gamma_{\rm RS}(s,{\rm As}\,{\bm \pi}_v,{\bm \psi}_v,\xi) = {\bm \omega}_v(\xi)|\xi^2|_{{\bf F}_v}^{s-1/2}\lambda_{{\bf E}_v/{\bf F}_v}(\psi)^{-1}\gamma_{\rm Gal}(s,{\rm As}\,{\bm \pi}_v,{\bm \psi}_v).
\end{align}
On the other hand, by \cite[Theorem 6.7]{Kri03}, the irreducible admissible representation ${\rm As}\,{\bm \pi} = \otimes_v{\rm As}\,{\bm \pi}_{v}$ is an isobaric automorphic representation of $\GL_4(\A_{\bf F})$. Since ${\rm As}\,{\bm \pi}$ is isobaric, it follows from the global functional equation for Rankin-Selberg $L$-functions that the global automorphic $L$-function $L_{\rm Gal}(s,{\rm As}\,{\bm \pi}) = \prod_{v}L_{\rm Gal}(s,{\rm As}\,{\bm \pi}_{v})$ has meromorphic continuation to $s \in \C$ and satisfies the functional equation 
\begin{align}\label{E:2.1.5}
L_{\rm Gal}(s,{\rm As}\,{\bm \pi}) = \varepsilon_{\rm Gal}(s,{\rm As}\,{\bm \pi}) L_{\rm Gal}(1-s,{\rm As}\,{\bm \pi}^{\vee}).
\end{align}
The assertion then follows from (\ref{E:dependence on psi}), (\ref{E:2.1.4}), (\ref{E:2.1.5}), and the global functional equation for $L_{\rm RS}(s,{\rm As}\,{\bm \pi})$. This completes the proof.

\end{proof}

\subsection{Proof of Theorem \ref{multiplicativity for ps}} \label{SS:2.2}
As explained in Remark \ref{epsilon for split}-(2), we may assume $\pi = {\rm Ind}_{B(E)}^{\GL_2(E)}(\mu,\nu)$ even though the induced representation might not be irreducible. 

It suffices to prove that there exist $W \in \ms{W}(\pi,\psi_\xi)$ and $\Phi \in \mf{S}(F^2)$ such that $Z(s,W,\Phi) \neq 0$ and
\begin{align*}
Z(1-s,W,\widehat{\Phi})&=\nu(-1)\gamma(s,\mu|_{F^\times},\psi)\gamma(s, \nu|_{F^\times}, \psi)\gamma(s, \mu\nu^\sigma, \psi_\xi)Z(s,W,\Phi).
\end{align*}
We have the following three cases: 
\begin{itemize}
\item[(i)] Exactly one of $\mu$ and $\nu$ is unramified, 
\item[(ii)] Both $\mu$ and $\nu$ are ramified, \label{case2}
\item[(iii)] Both $\mu$ and $\nu$ are unramified.
\end{itemize}
In case (iii), take $W \in \ms{W}(\pi,\psi_\xi)$ be a non-zero Whittaker function fixed by $\GL_2(\mathcal{O}_E)$, $\Phi = \mathbb{I}_{\mathcal{O}_F\oplus\mathcal{O}_F}\in \mf{S}(F^2)$, and $\chi=1$. With this datum, we can calculate both sides of \ref{FEasai} and deduce the formula for gamma factors. The calculation is standard and is left to the readers.

\subsubsection{The proof of case (i)}
\label{epsilon for special rep}
\begin{proof}
We may assume that $\mu$ is ramified and $\nu$ is unramified. Twist by $\nu^{-1}$, we may further assume $\nu=1$.  Put
$$r:=\lceil c(\mu)/e_{E/F} \rceil.$$
Here $e_{E/F}$ is the ramified index for $E/F$.  
Let $f\in\mathcal{B}(\mu,1)$ be a section characterized by 
\[f\left(\!\mat(1,0,x,1)\!\right)=\mu^{-1}(x)|x|_{E}^{-1}\mathbb{I}_{|x|_E\ge1}(x).\]
Note that $\rho(k)f=f$ for $k = \bp a& b \\c&d\ep \in \GL_2(\mathcal{O}_E)$ with $c \in \varpi_E^{c(\mu)}\mathcal{O}_E$ (Cf. \cite[Section 2.1]{Sc02}).

We divide this problem into the following two cases:
\begin{enumerate}
\item $\mu|_{F^\times}$ is ramified, \label{ke-su  1}
\item $\mu|_{F^\times}$ is unramified.\label{ke-su 2}
\end{enumerate}

\noindent {\bf Case (\ref{ke-su 1}): $\mu|_{F^\times}$ is ramified}\\
Denote $c_\mu=c(\mu|_{F^\times})$ and  
\[
\delta:=
\begin{cases}
0&\text{ if }c(\mu)/e_{E/F}\in\ratint,\\
1&\text{ if }c(\mu)/e_{E/F}\notin\ratint.
\end{cases}
\]
Note that when $E/F$ is ramified, $c(\chi)$ is even for any character $\chi$ of $E^{\times}$ with $\chi|_{F^\times}=1$. In particular, if $\delta=1$, then $c_\mu=r$. 

Let
\begin{align*}
g&:=|\varpi_F|_F^{-r}\int_{\varpi^{c_\mu}\intring{F}}\rho\!\left(\!\mat(1,0,x,1)\!\right)f\,dx.
\end{align*}
We explicitly determine $W_{\psi_\xi, g}\left(\!\mat(a,0,0,1)\!\right)$. Let $\theta\in\intring{E}$ be an element such that $\intring{E}=\intring{F}[\theta]$ and $$\psi_\xi(c+d\theta)=\psi(d)$$
for $c,d \in \mathcal{O}_F$. 
 By straightforward computation, for $a \in F^{\times}$, we have
\begin{align*}
&W_{\psi_\xi, g}\left(\!\mat(a,0,0,1)\!\right)\\
&=|a|_{F}\int_E\sum_{u\in\intring{F}/\varpi_{F}^{r-c_\mu}\intring{F}}\mu(x^{-1})|x|_F^{-1}f\left(\!\mat(1,0,x^{-1}+\varpi_F^{c_\mu}u,1)\!\right)\psi_{\xi}(-ax)\,dx\\
&=|a|_{F}\int_E\sum_{u\in\intring{F}/\varpi_{F}^{r-c_\mu}\intring{F}}\mu^{-1}(1+\varpi_{F}^{c_\mu}xu)|1+\varpi_{F}^{c_\mu}xu|_E^{-1}\mathbb{I}_{E\setminus\varpi_E\intring{E}}(x^{-1}+\varpi_F^{c_\mu}u)\psi_{\xi}(-ax)\,dx\\
&=|a|_{F}\int_{\intring{E}}\sum_{u\in\intring{F}/\varpi_{F}^{r-c_\mu}\intring{F}}\mu^{-1}(1+\varpi_{F}^{c_\mu}xu)\psi_{\xi}(-ax)\,dx\\
&=|a|_{F}\sum_{m=0}^{c_\mu}\int_{\intring{E}}A_m(x)\psi_{\xi}(-ax)\,dx\\
&=|a|_{F}\sum_{m=1}^{r-c_\mu}
\int_{\intring{F}}A_m(x\theta)\psi(-ax)\,dx
\end{align*}
where for $m=0,\dots, r-c_\mu$, we put
\[A_m(x):=\sum_{u\in\left(\intring{F}/\varpi_{F}^{r-c_\mu-m}\intring{F}\right)^\times}\mu^{-1}(1+\varpi_{F}^{c_\mu+m}xu).\]
Note that if $m<r-c_\mu$ and $a\notin \varpi_F^{-m-c_{\mu}+r}\mathcal{O}_F$, we have
\[\int_{\intring{E}}A_m(x)\psi_{\xi}(-ax)\,dx=0.\]
Therefore, if $\delta=1$, for $a \in F^{\times}$ we have
\begin{align*}
W_{\psi_\xi, g}\left(\!\mat(a,0,0,1)\!\right)=|a|_F\mathbb{I}_{\intring{F}}(a).
\end{align*}

Suppose $\delta=0$.
Let $m=r-c_\mu$, then for $a \in F^{\times}$ we have
\[\int_{\intring{F}}A_m(x\theta)\psi(-ax)\,dx=\mathbb{I}_{\intring{F}}(a).\]
Let $0\le m<r-c_\mu$. Since $A_m(xv)=A_m(x)$ for any $v\in\intring{F}^\times$, we have
\begin{align*}
&\int_{\intring{F}}A_m(x\theta)\psi(-ax)\,dx\\
&=\int_{\intring{F}}A_m(x\theta)\frac{|\varpi_{F}|_{F}^{r-m-c_\mu}}{1-|\varpi_{F}|_{F}}\sum_{w\in(\intring{F}/\varpi_{F}^{r-m-c_\mu}\intring{F})^\times}\psi(-\varpi_{F}^{m+c_\mu-r}wx)\,dx\cdot\mathbb{I}_{\varpi_{F}^{m+c_\mu-r}\intring{F}^\times}(a).
\end{align*}
Since for any $k\in\ratint$, $x\in\intring{F}$ with $r-k\ge1$,
\[\frac{|\varpi_{F}|_{F}^{r-k}}{1-|\varpi_{F}|_{F}}\sum_{w\in(\intring{F}/\varpi_{F}^{r-k}\intring{F})^\times}\psi(-\varpi_{F}^{k-r}wx)
=
\begin{cases}
0&x\notin\varpi_{F}^{r-k-1}\intring{F},\\
-(|\varpi_{F}|_F^{-1}-1)^{-1}&x\in\varpi_{F}^{r-k-1}\intring{F}^\times,\\
1&x\in\varpi_{F}^{r-k}\intring{F}.
\end{cases}\]
Hence, for $a \in F^{\times}$ we have
\begin{align*}
&\int_{\intring{F}}A_m(x\theta)\psi(-ax)\,dx\\
&=\left[\left(1-|\varpi_{F}|_{F}\right)-|\varpi_{F}|_{F}^{r-m-c_\mu}\sum_{u\in\left(\intring{F}/\varpi_{F}^{r-m-c_\mu}\intring{F}\right)^\times}\mu^{-1}(1+\varpi_{F}^{r-1}u)\right]\mathbb{I}_{\varpi_F^{m+c_\mu-r}\intring{F}^\times}(a)\\
&=(1-|\varpi_{F}|_{F}+|\varpi_{F}|_{F})\mathbb{I}_{\varpi_F^{m+c_\mu-r}\intring{F}^\times}(a)\\
&=\mathbb{I}_{\varpi_F^{m+c_\mu-r}\intring{F}^\times}(a).
\end{align*}
Thus, for $a \in F^{\times}$ we have
\begin{align*}
W_{\psi_\xi, g}\left(\!\mat(a,0,0,1)\!\right)=|a|_{F}\mathbb{I}_{\varpi_{F}^{c_\mu-r}
\intring{F}}(a).
\end{align*}
Therefore, in any case, for $a\in F^\times$ we have
\begin{align}\label{E:2.2.1}
W_{\psi_\xi, g}\left(\!\mat(a,0,0,1)\!\right)=|a|_{F}\mathbb{I}_{\varpi_{F}^{c_\mu-r}
\intring{F}}(a).
\end{align}

Note that for $x\in \intring{E}$ and $y \in E^{\times}$, we have
\begin{align*}
\rho(w_1)W_{\psi_\xi, f}\left(\!\mat(y,0,x,1)\!\right)\omega(y)^{-1}=\varepsilon(1/2, \pi, \psi_\xi)|\varpi_E^{c(\mu)}y|_E^{1/2}\mathbb{I}_{\intring{E}}(\varpi_E^{c(\mu)}y).
\end{align*}
Therefore, for $x\in\intring{F}$ and $y\in F^\times$, we have
\begin{align}\label{E:2.2.2}
\rho(w_1)W_{\psi_\xi, g}\left(\!\mat(y,0,x,1)\!\right)\omega(y)^{-1}=|\varpi_F|_F^{2r-\delta}\varepsilon(0, \pi, \psi_\xi)|y|_F\mathbb{I}_{\varpi_F^{-r+\delta}\intring{F}}(y).
\end{align}

Let $W=W_{\psi_\xi, f}$, $\Phi(x,y)=\mathbb{I}_{\varpi_F^{c_\mu}\intring{F}}(x)\cdot\mathbb{I}_{1+\varpi^{c_\mu}\intring{F}}(y)$, and $\chi=1$. With this datum,  the functional equation (\ref{FEasai}) reduces to
\begin{align*}
&\int_{\intring{F}}\int_{F^\times}\rho(w_1)W_{\psi_\xi, f}\left(\!\mat(y,0,x,1)\!\right)\omega^{-1}(y)|y|_F^{-s}\,d^\times_Fy\,d_Fx\\
&=\frac{\gamma_{\rm RS}(s, \As\pi, \psi,\xi)}{|\varpi_F|_F^{c_\mu s}\varepsilon(s,\mu|_{F^\times},\psi)\zeta_F(s)}|\varpi_F|_F^{r}\int_{F^\times}W_{\psi_\xi, g}\left(\!\mat(y,0,0,1)\!\right)|y|_F^{s-1}\,d^\times_Fy.
\end{align*}
By formulae (\ref{E:2.2.1}) and (\ref{E:2.2.2}), the above equation reduces to
\begin{align*}
&|\varpi_F|_F^{r(1-s)}\zeta_F(1-s)\varepsilon(s, \mu, \psi_\xi)=|\varpi_F|_F^{r(1-s)}\zeta_F(s)\frac{\gamma_{\rm RS}(s, \As\pi, \psi,\xi)}{\varepsilon(s,\mu|_{F^\times},\psi)}.
\end{align*}
In other words, we have
\[\gamma_{\rm RS}(s, \As\pi, \psi,\xi)=\gamma(s, \mu|_{F^\times}, \psi)\gamma(s,1,\psi)\gamma(s, \mu, \psi_\xi).\]
This completes the proof of case (1).

\noindent {\bf Case (\ref{ke-su 2}): $\mu|_{F^\times}$ is unramified}
Note that in this case, $c(\mu)$ is alway even when $E/F$ is ramified extension.
Let
\begin{align*}
h&:=\int_{\GL_2(\intring{F})}\rho(k)f dk\\
&=\sum_{u\in\intring{F}/\varpi_{F}^{r-1}\intring{F}}\rho\!\left(\!\mat(1,0,\varpi_{F}u,1)\!\right)f+\sum_{u\in\intring{F}/\varpi_{F}^r\intring{F}}\rho\!\left(\!w_1\mat(1,0,u,1)\!\right)f.
\end{align*}
be a $\GL_2(\intring{F})$-invariant vector.
We directly compute $W_{\psi_\xi, h}\left(\!\mat(a,0,0,1)\!\right)$
for $a\in F^\times$.
Let
\begin{align*}
h_1:=\sum_{u\in\intring{F}/\varpi_{F}^{r-1}\intring{F}}\pi\!\left(\!\mat(1,0,\varpi_{F}u,1)\!\right)f,\quad h_2:=\sum_{u\in\intring{F}/\varpi_{F}^r\intring{F}}\pi\!\left(w_1\!\mat(1,0,u,1)\!\right)f.
\end{align*}
The function $W_{\psi_\xi, h_1}$ is computed in the totally same way with that for $W_{\psi_\xi, g}$ in the previous case, and we have
\[W_{\psi_\xi, h_1}\left(\!\mat(a,0,0,1)\!\right)=|a|_F\mathbb{I}_{\varpi_F^{1-r}\intring{F}}(a).\]
We compute $W_{\psi_\xi, h_2}\left(\!\mat(a,0,0,1)\!\right)$ for $a\in F^\times$.  
\begin{align*}
&W_{\psi_\xi, h_2}\left(\!\mat(a,0,0,1)\!\right)\\
&=
|a|_{F}\sum_{u\in\intring{F}/\varpi_{F}^{r}\intring{F}}\int_{E}\mu^{-1}(x+u)|x+u|_{E}^{-1}\mathbb{I}_{|x|_E\ge1}(x+u)\psi_\xi(-ax)\,dx\\
&=\mu(a)|a|_{F}|\varpi_{F}|_{F}^{-r}\varepsilon(1,\mu,\psi_\xi)\mathbb{I}_{\varpi_{F}^{-r}\intring{F}}(a).
\end{align*}
By \cite[Th\'eor\`em 3.2]{Del76}, we have
\[\varepsilon(1,\mu,\psi_\xi)=\mu(\varpi_F)^r|\varpi_F|_F^{r}.\]
Therefore, for $a \in F^{\times}$ we have
\begin{align}\label{E:2.2.3}
W_{\psi_\xi, h}\left(\!\mat(a,0,0,1)\!\right)
=\mu(a)|a|_{F}\mu(\varpi_{F})^{r}\mathbb{I}_{\varpi_{F}^{-r}\intring{F}}(a)+|a|_{F}\mathbb{I}_{\varpi_{F}^{1-r}\intring{F}}(a).
\end{align}

Let $W=W_{\psi_\xi, f}$, $\Phi=\mathbb{I}_{\mathcal{O}_F\oplus\mathcal{O}_F}$, and $\chi=1$. With this datum, the functional equation (\ref{FEasai}) reduces to
\begin{align*}
&L(2-2s,\mu^{-1}\vert_{F^{\times}})\int_{\GL_2(\intring{F})}\int_{F^\times}W\left(\!\mat(y,0,0,1)k\!\right)\mu^{-1}(y)|y|_F^{-s}d^\times_{F}y\,dk\\
&=
{L(2s, \mu|_{F^\times})}\gamma_{RS}(s,{\rm As}\,\pi,\psi,\xi)\int_{\GL_2(\intring{F})}\int_{F^\times}W\left(\!\mat(y,0,0,1)k\!\right)|y|_F^{s-1}\,d^\times_{F}y\,dk .
\end{align*}
By formula (\ref{E:2.2.3}), the above equation reduces to 
\begin{align*}
\mu(\varpi_F)^r|\varpi_F|_F^{r(s-1)}\zeta_F(1-s)L(1-s,\mu^{-1}\vert_{F^{\times}}) = |\varpi_F|_F^{-rs}\zeta_F(s)L(s,\mu\vert_{F^{\times}})\gamma(s,{\rm Ad}\,\pi,\psi,\xi).
\end{align*}
In other words, we have
\[\gamma_{\rm RS}(s, \As\pi, \psi,\xi)=\mu(\varpi_F)^r|\varpi_F|_F^{r(2s-1)}\gamma(s, \mu|_{F^\times}, \psi)\gamma(s,1,\psi).\]
By \cite[Th\'eor\`em 3.2]{Del76}, we have
\[\varepsilon(s,\mu,\psi_\xi)=\mu(\varpi_F)^r|\varpi_F|_F^{r(2s-1)}.\]
This completes the proof of case (2).
\end{proof}

\subsubsection{The proof of case (ii)}
\begin{proof}
Denote $\mu \boxplus \nu = {\rm Ind}_{B(E)}^{\GL_2(E)}(\mu,\nu)$. Note that $${\rm As}\,(\mu\boxplus\nu) = {\rm As}\,(\mu\nu^{-1}\boxplus1)\otimes\nu\vert_{F^{\times}}= {\rm As}\,(1\boxplus\mu^{-1}\nu)\otimes\mu\vert_{F^{\times}}.$$
Therefore, we are in case (i) or case (iii) when one of $\mu\vert_{F^{\times}}$ or $\nu\vert_{F^{\times}}$ is unramified. 

Assume both $\mu\vert_{F^{\times}}$ and $\nu\vert_{F^{\times}}$ are ramified, and let $c_{\mu} = c(\mu\vert_{F^{\times}})$, $c_{\nu}=c(\nu\vert_{F^{\times}})$, and $c_{\omega} = c(\omega\vert_{F^{\times}})$. Let $\Phi(x,y)=\nu(x)^{-1}\mathbb{I}_{\varpi_F^{-c_{\nu}}\intring{F}^\times}(x)
\cdot\mathbb{I}_{1+\varpi_F^{\max\{c_\mu, c_\nu, c_{\omega}\}}\intring{F}}(y)$ and $\chi=1$. With this datum, the functional equation (\ref{FEasai}) reduces to 
\begin{align}\label{main formula 2}
\begin{split}
&\int_{\varpi_{F}^{-c_\mu}\intring{F}^\times}\int_{F^\times}W\left(\!\mat(y,0,x,1)\!\right)\omega^{-1}(y)|y|_F^{-s}\mu(x)|x|_F^{s-1}\,d^\times_{F}y\,dx
\\&=\frac{\mu(-1)\gamma(s,\As\pi,\psi,\xi)}{\varepsilon(s,\nu|_{F^\times},\psi)\varepsilon(s,\,\mu|_{F^\times},\,\psi)}\int_{\varpi_{F}^{-c_\nu}\intring{F}^\times}\int_{F}W\left(\!\mat(y,0,x,1)\!\right)|y|_F^{s-1}\nu^{-1}(x)|x|_F^{-s}\,d^\times_{F}y\,dx.
\end{split}
\end{align}
For $m,n \in \Z_{\geq 0}$, let $\phi_{m,n}(x,y,z):F\times F^\times\times E\rightarrow\cpxnum$ be a function with compact support defined by
\[\phi_{m,n}^{\nu}(x,y,z):=\mathbb{I}_{\varpi_F^{-c_\nu}\intring{F}^\times}(x)\cdot\mathbb{I}_{|\varpi_F|_F^m\le |y|_F \le |\varpi_F|_F^{-m}}(y)\cdot\mathbb{I}_{|\varpi_E|_E^{n}\le|z|_E\le|\varpi_E|_E^{-n}}(z).\]
For $f \in \mathcal{B}(\mu,\nu)$ and $m,n\in\Z_{\geq0}$, define the integral 
\begin{align*}
I_{m,n}(s,\mu,\nu;f):=&\int_F\int_{F^\times}\int_E f\left(\!\mat(0,-1,1,0)\mat(1,z,0,1)\mat(y,0, x,1)\!\right)\\
&\times\psi_\xi(-z)|y|_F^{s-1}\nu^{-1}(x)|x|_F^{-s}\phi_{m,n}^\nu(x,y,z)\,d_Fx\,d_F^\times y\,d_Ez.
\end{align*}
Note that the limit
\[\lim_{m\rightarrow\infty}\left(\lim_{n\rightarrow\infty}I_{m,n}(s,\mu,\nu;f)\right)\quad \left(\text{resp. }\lim_{m\rightarrow\infty}\left(\lim_{n\rightarrow\infty}I_{m,n}(1-s,\nu^{-1},\mu^{-1};f)\right)\right)\]
is equal to the integral in the right (resp. left) hand side of the formula (\ref{main formula 2}).
By direct computation, we have
\begin{align}\label{E:2.2.5}
\begin{split}
&I_{m,n}(s,\mu,\nu;f)\\
&=
\int\int\int\nu\mu^{-1}(z)|z|_E^{-1}f\left(\!\mat(y,0,yz^{-1}+x,1)\!\right)
\psi_\xi(-z)|y|_F^{s-1}\nu^{-1}(x)|x|_F^{-s}\\
&\hspace{10pt}\times\phi_{m,n}^\nu(x,y,z)\,d_Fx\,d_F^\times y\,d_Ez\\
&\overset{z\rightsquigarrow zy}{=}
\int\int\int\nu\mu^{-1}(zy)|zy|_E^{-1}f\left(\!\mat(y,0,z^{-1}+x,1)\!\right)
\psi_\xi(-zy)|y|_F^{s+1}\nu^{-1}(x)|x|_F^{-s}\\
&\hspace{20pt}\times\phi_{m,n}^\nu(x,y,zy)\,d_Fx\,d_F^\times y\,d_Ez\\
&\overset{y\rightsquigarrow y/(zz^\sigma)}{=}
\int\int\int\nu\mu^{-1}(y/z^\sigma)|y/z^\sigma|_E^{-1}f\left(\!\mat(y/(zz^\sigma),0,z^{-1}+x,1)\!\right)\\
&\hspace{30pt}\times\psi_\xi(-y/z^\sigma)|y/zz^\sigma|_F^{s+1}\nu^{-1}(x)|x|_F^{-s}\phi_{m,n}^\nu\left(x,y/(zz^\sigma),y/z^\sigma\right)\,d_Fx\,d_F^\times y\,d_Ez\\
&\overset{z\rightsquigarrow 1/z}{=}
\int\int\int\nu\mu^{-1}(yz^\sigma)|yz^\sigma|_E^{-1}|z|_E^{-2}f\left(\!\mat(yzz^\sigma,0,z+x,1)\!\right)\\
&\hspace{30pt}\times\psi_\xi(-yz^\sigma)|yzz^\sigma|_F^{s+1}\nu^{-1}(x)|x|_F^{-s}\phi_{m,n}^\nu\left(x,yzz^\sigma,yz^\sigma\right)\,d_Fx\,d_F^\times y\,d_Ez\\
&=
\int\int\int\nu^{-1}(x)|x|_F^{-s}\cdot\nu(y)|y|_F^{s}\cdot\mu\nu^\sigma(z)|z|_E^{s-1}f\left(\!\mat(1,0,z+x,1)\!\right)\\
&\hspace{30pt}\times\psi_\xi(yz)\phi_{m,n}^\nu\left(x,yzz^\sigma,yz^\sigma\right)\,d_Fx\,d_F^\times y\,d_Ez\\
&=
\frac{\zeta_F(1)}{\zeta_E(1)}\int\int\int\nu^{-1}(x)|x|_F^{-s}\cdot\nu(y)|y|_F^{s-1}\cdot\mu\nu^\sigma(z)|z|_E^{s}f\left(\!\mat(1,0,z+x,1)\!\right)\\
&\hspace{30pt}\times\psi_\xi(yz)\phi_{m,n}^\nu\left(x,yzz^\sigma,yz^\sigma\right)\,d_Fx\,d_Fy\,d_E^\times z.
\end{split}
\end{align}

Now we specify $f$. Let $\theta\in\intring{E}$ be an element such that
\begin{align}\label{E:theta}
 \intring{E}=\intring{F}[\theta],\quad \psi_\xi(a+b\theta)=\psi(b)
\end{align}
for any $a,b\in F$. For $x\in E$, we write $x=a_x+b_x\theta$ for a unique pair $(a_x,b_x)\in F$.
Fix a choice of $\eta \in F^{\times}$ and $M \in \Z_{\geq 0}$ such that
\begin{align}\label{E:2.2}
\begin{split}
&|\eta\varpi_F^M|_F<|\eta^2\varpi_F^{c_\omega}|_F<|\varpi_F|_F^{c(\mu\nu^{\sigma})/e_{E/F}},\quad|\varpi_F|_F^M \leq |\varpi_F|_F^{c_{\nu}}.
\end{split}
\end{align}
Let $\Phi_{\eta, M} \in \mf{S}(E)$ be a Bruhat-Schwartz on $E$ defined by
$$\Phi_{\eta, M}(x)=\psi(\eta a_x)\mathbb{I}_{\varpi_F^{-M}\intring{F}}(a_x)\cdot\mathbb{I}_{\eta+\varpi_F^M\intring{F}}(b_x).$$
Let $N=N(\eta,M) \in \Z_{\geq 0}$ such that for $z \in E$ with  $\Phi_{\eta,M}(z)\neq 0$, we have
$$|\varpi_E|_E^N \leq|z|_E\leq |\varpi_E|_E^{-N}.$$ Define $f \in \mathcal{B}(\mu,\nu)$ be the section characterized by
\begin{align*}
f\left(\!\mat(1,0,x,1)\!\right)&=\Phi_{\eta,M}(x).
\end{align*}
Take $m,n$ sufficiently large so that
\begin{align}\label{E:2.2.6}
|\varpi_F|_F^{m-N}|\eta|_F \leq |\varpi_F|_F^{-c_{\nu}} \leq |\varpi_F|_F^{-m+N}|\eta|_F, \quad |\varpi_E|_E^{n-N}|\eta|_E \leq |\varpi_F|_E^{-c_{\nu}} \leq |\varpi_E|_E^{-n+N}|\eta|_E.
\end{align}
With this choice of $m,n$, and $f$ in (\ref{E:2.2.5}), we have
\begin{align*}
&I_{m,n}(s,\mu,\nu;f)\\
&=
\frac{\zeta_F(1)}{\zeta_E(1)}\int\int\int\nu^{-1}(x)|x|_F^{-s}\psi(\eta x)\cdot\nu(\eta)^{-1}|\eta|_F^{-s}\nu(y)|y|_F^{s-1}\psi(y)\cdot\mu\nu^\sigma(z)|z|_E^{s}\psi(\eta a_z)\\
&\hspace{30pt}\times\mathbb{I}_{\varpi_F^{-M}\intring{F}}(a_z)\mathbb{I}_{\eta+\varpi_F^M\intring{F}}(b_z)\phi_{m,n}^\nu\left(x,\eta^{-1}yzz^\sigma,\eta^{-1}yz^\sigma\right)\,d_Fx\,d_Fy\,d_E^\times z\\
&=\frac{\zeta_F(1)}{\zeta_E(1)}|\eta|_F^{-1}\varepsilon(s,\nu\vert_{F^{\times}},\psi)\varepsilon(1-s,\nu\vert_{F^{\times}}^{-1},\psi)\\
&\hspace{30pt}\times\int\mu\nu^\sigma(z)|z|_E^{s}\psi(\eta a_z)\mathbb{I}_{\varpi_F^{-M}\intring{F}}(a_z)\mathbb{I}_{\eta+\varpi_F^M\intring{F}}(b_z)\\
&\hspace{41pt}\times\phi_{m,n}^\nu\left(\varpi^{-c_\nu},\eta^{-1}\varpi_F^{-c_\nu}zz^\sigma,\eta^{-1}\varpi_F^{-c_\nu}z^\sigma\right)d_E^\times z
\end{align*}
The first equality follow from a change of variable from $y$ to $yb^{-1}_z$ and (\ref{E:2.2}). The last equality follow from (\ref{E:2.2.6}) and the well-known formula that for $k \in \Z$, we have
$$\int_{\varpi_F^{k}\mathcal{O}_F^{\times}}\nu(y)|y|_F^{s-1}\psi(y)dy= \begin{cases}  0 & \mbox{ if $k\neq -c_{\nu}$},\\ \epsilon(1-s,\nu\vert_{F^{\times}}^{-1},\psi) & \mbox{ if $k =- c_{\nu}$}.\end{cases}$$
Thus, we conclude that
\[\lim_{m\rightarrow\infty}\lim_{n\rightarrow\infty}I_{m,n}(s,\mu,\nu;f)
=\nu(-1)|\eta|_F^{-1}\frac{\zeta_F(1)}{\zeta_E(1)}\int_E\Phi_{\eta, M}(z)\mu\nu^\sigma(z)|z|_E^{s}\,d_E^\times z.\]
Similarly, we also have
\[\lim_{m\rightarrow\infty}\lim_{n\rightarrow\infty}I_{m,n}(1-s,\nu^{-1},\mu^{-1};f)=\mu(-1)|\eta|_F^{-1}\frac{\zeta_F(1)}{\zeta_E(1)}\int_E\Phi_{\eta, M}(z)\nu\mu^\sigma(z)^{-1}|z|_E^{1-s}\,d_E^\times z.\]
By (\ref{E:theta}) and the condition that $\eta \varpi_F^M \in \mathcal{O}_F$, one can show that the Fourier transform of $\Phi_{\eta, M}$ with respect to $\psi_{\xi}$ satisfying
\[\widehat{\Phi}_{\eta, M}(z)=\Phi_{\eta, M}(z^\sigma).\]
Therefore, by the functional equation for the character $\mu\nu^\sigma$, we have
\[\lim_{m\rightarrow\infty}\lim_{n\rightarrow\infty}I_{m,n}(1-s,\nu^{-1},\mu^{-1};f)=\mu\nu(-1)\gamma(s,\mu\nu^\sigma,\psi_\xi)\lim_{m\rightarrow\infty}\lim_{n\rightarrow\infty}I_{m,n}(s,\mu,\nu;f).\]
Therefore, what remaining to prove is that the integral
\begin{align*}
\int_{E^\times}\Phi_{\eta, M}(z)\mu\nu^\sigma(z)|z|_E^{s}\,d_E^\times z
\end{align*}
is non zero. By direct computation, we have
\begin{align*}
&\int_{E^\times}\Phi_{\eta, M}(z)\mu\nu^{\sigma}(z)|z|_E^{s}\,d_E^\times z\\
&=\zeta_E(1)\int_{\eta+\varpi_F^M\intring{F}}\int_{\varpi_F^{-M}\intring{F}}\psi(\eta a)\mu\nu^{\sigma}(a+b\theta)|a+b\theta|_E^{s-1}\,d_Fa\,d_Fb\\
&=\zeta_E(1)\mu\nu^{\sigma}(\eta)|\eta|_F^{2s-1}\int_{\eta+\varpi_F^M\intring{F}}\int_{\eta^{-1}\varpi_F^{-M}\intring{F}}
\psi(\eta ab)\mu\nu^{\sigma}(a+\theta)|a+\theta|_E^{s-1}\,d_Fa\,d_Fb\\
&=\zeta_E(1)|\varpi_F|_F^{M}\mu\nu^{\sigma}(\eta)|\eta|_F^{2s-1}\int_{\eta^{-1}\varpi_F^{-M}\intring{F}}
\psi(\eta^{2}a)\mu\nu^{\sigma}(a+\theta)|a+\theta|_E^{s-1}\,d_Fa\\
&=\zeta_E(1)|\varpi_F|_F^{M}\mu\nu^{\sigma}(\eta)|\eta|_F^{2s-1}\\
&\hspace{11pt}\times\left[\int_{\intring{F}}\psi(\eta^{2}a)\mu\nu^{\sigma}|\cdot|_E^{s-1}(a+\theta)\,d_Fa+\sum_{r=-M-\ord_F(\eta)}^{-1}|\varpi_F|_E^{r(s-1)}\int_{\varpi_F^{r}\intring{F}^\times}
\psi(\eta^{2}a)\mu\nu^{\sigma}(a+\theta)\,d_Fa\right].
\end{align*}
It suffices to show that 
$$\int_{\varpi_F^{r}\intring{F}^\times}
\psi(\eta^{2}a)\mu\nu^{\sigma}(a+\theta)\,d_F \neq 0$$
for some $-M-{\rm ord}_F(\eta) \leq r \leq -1$. By (\ref{E:2.2}), $-M-{\rm ord}_F(\eta) \leq (-2\ord_F(\eta)-c_\omega) \leq -1$ and 
\begin{align*}
\int_{\eta^{-2}\varpi_F^{-c_\omega}\intring{F}^\times}\psi(\eta^2a)\mu\nu^{\sigma}(a+\theta)d_Fa &=\int_{\eta^{-2}\varpi_F^{-c_\omega}\intring{F}^\times}\psi(\eta^{2}a)\omega(a)\,d_Fa\\
&=\omega(\eta)^{-2}|\eta|_F^{-2}
\begin{cases}
\varepsilon(0,\omega^{-1}|_{F^\times},\psi)&\text{ if }c_\omega>0,\\
\zeta_F(1)^{-1}&\text{ if }c_\omega=0.
\end{cases}
\end{align*}
This completes the proof.
\end{proof}

\section{Asai local factors for the archimedean case}\label{S3}
In this section, we develop the theory concerning the analytic properties of the zeta integrals defined by \eqref{zeta integral} for the case $E=\C$ and $F=\R$, and prove analogy results to that of \cite[Theorem 17.2]{Jac72} and 
\cite[Theorem 2.1]{Jac09}. Instead of concerning the Harish-Chandra modules, we consider the Harish-Chandra representations.

\subsection{Harish-Chandra representation}
Let $(\pi,V)$ be an infinite-dimensional irreducible Harish-Chandra representation of 
${\rm GL}_2(\C)$ defined in \cite{Cas89}. Then $V$ is a Frechet space and the representation on 
$V$ is smooth. Moreover, $(\pi,V)$ is of moderate growth, a notion that we now recall. 
We follow the notation in \cite{Jac09}. For $g\in{\rm GL}_2(\C)$, we set
\[
\|g\|_H={\rm Tr}\left(g\,^t\b{g}\right)+{\rm Tr}\left(g^{-1}\,^t\b{g}^{-1}\right).
\]
Then for every continuous semi-norm $\|\cdot\|$ on $V$, there is a positive integer $M$ and another 
semi-norm $\|\cdot\|'$ such that for every $v\in V$ and $g\in{\rm GL}_2(\C)$, 
\[
\|\pi(g)v\|\leq \|g\|_H^M \|v\|'.
\]

Let $V_0$ denote the subspace of ${\rm U}_2(\R)$-finite elements in $V$. Then $V_0$ is a 
Harish-Chandra module. Its a result of Casselman and Wallach that $V$ is determined, up to 
topological equivalent by the equivalent class of $V_0$. In other words, $V$ is the canonical
Casselman-Wallach completion of $V_0$. In \cite{Jac09}, Jacquet called $V$ a 
Casselman-Wallach representation.

Classification of the Harish-Chandra module $V_0$ can be founded in \cite[Theorem 6.2]{JL70}. 
Combining this result with \cite[Propositions 4.1, 4.4]{Cas89}, we see that
$\pi={\rm Ind}_{\B(\C)}^{{\rm GL}_2(\C)}(\mu,\nu)$ is an induced representation. 
Therefore $V$ can be realized as 
$\cB(\mu,\nu)$ and the representation is now by the right translation. 

We describe the topology on $\cB(\mu,\nu)$.  
By the Iwasawa decomposition for ${\rm GL}_2(\C)$, we can identify the space 
of functions in $\cB(\mu,\nu)$ to the space of their restriction to ${\rm U}_2(\R)$, which we 
denote by $\cB(\mu,\nu)|_{{\rm U}_2(\R)}$. The topology of $\cB(\mu,\nu)|_{{\rm U}_2(\R)}$ is the 
one given by the semi-norms
\[
{\rm Sup}_{k\in {\rm U}_2(\R)}\left|\rho(X)f(k)\right|,
\]
where $X$ ranges over the universal enveloping algebra of the complexified Lie algebra of 
${\rm U}_2(\R)$. 

We let $\left(\pi^\vee, V^\vee\right)$ be a representation of ${\rm GL}_2(\C)$ which isomorphic to 
${\rm Ind}_{\B(\C)}^{{\rm GL}_2(\C)}(\mu^{-1},\nu^{-1})$. 
Then the Harish-Chandra modules $V_0$ and $V_0^\vee$ are dual to each other. 
By our definition, we have $\left(\pi^\vee\right)^\vee=\pi$.

Let $\psi'$ be a non-trivial additive character of $\C$. By \cite[Theorem 15.4.1]{Wal92}, 
there is a non-zero continuous functional $\lambda_{\psi'}$ on $\cB(\mu,\nu)$, and within a 
scalar factor, a unique one, such that for every $f\in\cB(\mu,\nu)$ and $u\in\U(\C)$, we have
\[
\lambda_{\psi'}(\pi(u)f)=\psi'(z)\lambda_{\psi'}(f).
\] 
Here $u=\pMX 1z01$ with $z\in\C$. 

For each $f\in\cB(\mu,\nu)$, we set
\begin{equation}\label{E:Whittaker function from induce representation}
W_{\psi',f}(g)=\lambda_{\psi'}(\rho(g)f),\quad g\in{\rm GL}_2(\C).
\end{equation}
We denote by $\cW(\pi,\psi')$ the space spanned by the functions $W_{\psi',f}$. By 
\cite[Section 6]{JL70}, its also spanned by the functions
\begin{equation}\label{E:integral representation of Whittaker function}
W^{\psi'}_{\Psi}(g)
=
\mu({\rm det}(g))|{\rm det}(g)|_\C^{\frac{1}{2}}
\int_{\C^{\x}}\omega_{\psi'}(g)\Psi(z,z^{-1})\mu\nu^{-1}(z)d^{\x}z.
\end{equation}
Here $\Psi$ is an element in the space $\cS(\C^2)$, and $(\omega_{\psi'},\cS(\C^2))$ is the 
Weil representation of ${\rm GL}_2(\C)$ defined in the first section of \cite{JL70}. 

Note that the space $\cW\left(\pi^\vee,\psi'\right)$ is spanned by the functions 
$\left(W\ot\omega^{-1}\right)(g):=W(g)\omega({\rm det}(g))^{-1}$ with $W\in\cW(\pi,\psi')$. 

\subsection{Main results for archimedean case}\label{SS:main result}
We state our main results of this section. Let 
\begin{equation}\label{E:equation of mu and nu}
\mu(z)=|z|_\C^{\lambda_1-\frac{n_1}{2}}z^{n_1}
\quad\text{and}\quad
\nu(z)=|z|_\C^{\lambda_2-\frac{n_2}{2}}z^{n_2},
\end{equation}
for some $\lambda_1, \lambda_2\in\C$ and $n_1, n_2\in\Z$.
Let $\psi(x)=e^{2\pi i ax}$ for some $a\in\R^{\x}$. Note that $\xi=c\sqrt{-1}$ for some non-zero
real number $c$.



Define a subspace of $\cS(\R^2)$  
\[
\cS\left(\R^2,\psi\right)
=
\stt{p(x,y)\,e^{-\pi |a|(x^2+y^2)}\mid p(x,y)\in\C[x,y]}.
\]
We note that $\cS(\R^2,\psi)$ is invariant under the Fourier transform $\Phi\mapsto\wh{\Phi}$ defined by \eqref{fourier transform}.

Following \cite{Jac09}, we let $\cL\left({\rm As}\,\pi\right)$ be the space of meromorphic 
functions $f(s)$ which are holomorphic multiples of $L\left(s,{\rm As}\,\pi\right)$ and furthermore
satisfy the following condition. Let $P(s)\in\C[s]$ be a polynomial such that 
$P(s)L\left(s,{\rm As}\,\pi\right)$ is holomorphic in the strip $a\leq{\rm Re}(s)\leq b$. Then
$P(s)f(s)$ is bounded in the same strip. 

We put 
\begin{align*}
L_{\rm Gal}\left(s, {\rm As}\,\pi\right)
&=
L\left(s,\mu|_{\R^{\x}}\right)
L\left(s,\nu|_{\R^{\x}}\right)
L\left(s,\mu\nu^\sigma\right),\\
L_{\rm Gal}\left(s, {\rm As}\,\pi^{\vee}\right)
&=
L\left(s,\mu^{-1}|_{\R^{\x}}\right)
L\left(s,\nu^{-1}|_{\R^{\x}}\right)
L\left(s,\mu^{-1}(\nu^{-1})^\sigma\right),\\
\varepsilon_{\rm Gal}\left(s,{\rm As}\,\pi,\psi\right)
&=
\lambda_{\C/\R}(\psi)
\varepsilon\left(s,\mu|_{\R^{\x}},\psi\right)
\varepsilon\left(s,\nu|_{\R^{\x}},\psi\right)
\varepsilon\left(s,\mu\nu^\sigma,\psi_\C\right).
\end{align*}
Notice that the Langlands constant is given by $\lambda_{\C/\R}(\psi)={\rm sgn}(a)\sqrt{-1}$. 

\begin{thm}\label{T:main theorem}
Let $W\in\cW(\pi,\psi_\xi)$ and $\Phi\in\cS(\R^2)$.
\begin{itemize}
\item[(1)]
The zeta integral $Z(s,W,\Phi)$ converges absolutely when
$Re(s)>2\,max\stt{-Re(\lambda_1),-Re(\lambda_2)}$ and has a meromorphic continuation to the 
whole complex plane. In fact, $Z(s,W,\Phi)$ defines an element in the space 
$\cL\left({\rm As}\,\pi\right)$. Moreover, if $W\in\cW(\pi,\psi_\xi)_0$ and $\Phi\in\cS(\R^2,\psi)$,
then $Z(s,W,\Phi)$ is of the form 
\[
|a|^{-2s}|\xi|_\C^{-s/2}\,P(s)\,L_{\rm Gal}\left(s,{\rm As}\,\pi\right),
\]
for some $P(s)\in\C[s]$.
\item[(2)]
The functional equation 
\[
\frac{Z\left(1-s,W\ot\omega^{-1},\wh{\Phi}\right)}
{L_{\rm Gal}\left(1-s,{\rm As}\,\pi^{\vee}\right)}
=
\varepsilon_{{\rm RS}}\left(s,{\rm As}\,\pi,\psi,\xi\right)
\frac{Z\left(s,W,\Phi\right)}
{L_{\rm Gal}\left(s,{\rm As}\,\pi\right)},
\]
holds in the sense of analytic continuation. Moreover, we have the following relation
\begin{equation}\label{E:relation between epsilon factors}
\omega^{-1}(\xi)|\xi|_\C^{-s+\frac{1}{2}}\lambda_{\C/\R}(\psi)
\varepsilon_{{\rm RS}}\left(s,{\rm As}\,\pi,\psi,\xi\right)
=
\varepsilon_{\rm Gal}\left(s,{\rm As}\,\pi,\psi\right).
\end{equation}
\item[(3)]
There exist $W\in\cW(\pi,\psi_\xi)_0$ and $\Phi\in\cS(\R^2,\psi)$ such that
\[
Z\left(s,W,\Phi\right)
=
|a|^{-2s}|\xi|_\C^{-s/2}\,L_{\rm Gal}\left(s,{\rm As}\,\pi\right).
\]
\end{itemize}
\end{thm}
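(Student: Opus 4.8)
The starting point is that, as recalled above, $\pi$ is a full principal series ${\rm Ind}_{B(\C)}^{\GL_2(\C)}(\mu,\nu)$, so by \eqref{E:integral representation of Whittaker function} the space $\cW(\pi,\psi_\xi)$ is spanned by the functions $W^{\psi_\xi}_\Psi$ with $\Psi\in\cS(\C^2)$, and it is enough to analyze $Z(s,W^{\psi_\xi}_\Psi,\Phi)$ for such data. First I would substitute \eqref{E:integral representation of Whittaker function} into the zeta integral \eqref{zeta integral}, write $\U(\R)\backslash\GL_2(\R)$ in Iwasawa coordinates, and use the explicit formulas for the Weil representation $\omega_{\psi_\xi}$ of $\GL_2(\C)$ restricted to $\GL_2(\R)$ on the diagonal torus and on the Weyl element. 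Unfolding the inner $\C^\x$-integral against the $\GL_2(\R)$-integral and performing the evident changes of variable, $Z(s,W^{\psi_\xi}_\Psi,\Phi)$ is transformed---for factorizable $\Psi$ and $\Phi$---into a product of three one-dimensional Tate-type local zeta integrals: two over $\R^\x$ attached to $\mu|_{\R^\x}$ and $\nu|_{\R^\x}$, and one over $\C^\x$ attached to $\mu\nu^\sigma$, evaluated on Bruhat--Schwartz functions manufactured from $\Psi$ and $\Phi$. These are precisely the characters whose $L$-factors make up $L_{\rm Gal}(s,{\rm As}\,\pi)=L(s,\mu|_{\R^\x})L(s,\nu|_{\R^\x})L(s,\mu\nu^\sigma)$. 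This is the $E=\C$ analogue of---though not a formal consequence of---Jacquet's treatment of the $\GL_2\times\GL_2$ Rankin--Selberg integral in \cite[Theorem 17.2]{Jac72} and \cite[Theorem 2.1]{Jac09}.

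Granting this reduction, part (1) reduces to the classical archimedean theory of local zeta integrals of characters. Absolute convergence for ${\rm Re}(s)>2\max\{-{\rm Re}(\lambda_1),-{\rm Re}(\lambda_2)\}$ follows from the Schwartz decay of the integrands together with the exponents in \eqref{E:equation of mu and nu}; meromorphic continuation and the statement $Z(s,W,\Phi)\in\cL({\rm As}\,\pi)$ follow because each archimedean Tate integral is a holomorphic multiple of its local $L$-factor and, after multiplication by a polynomial clearing the poles, is bounded in every vertical strip---properties stable under products and preserved by the reduction. When in addition $W\in\cW(\pi,\psi_\xi)_0$ and $\Phi\in\cS(\R^2,\psi)$, one checks that $\Psi$ may be taken to be a polynomial times a Gaussian on $\C^2$, so each of the three Tate integrands is a polynomial times a Gaussian and hence each Tate integral is a polynomial multiple of its $L$-factor; collecting the normalizing constants---the powers of $|a|$ coming from $\psi=\psi^a$ in \eqref{fourier transform}, the powers of $|\xi|_\C$ coming from $\psi_\xi$, and the self-dual measures---gives the asserted form $|a|^{-2s}|\xi|_\C^{-s/2}P(s)L_{\rm Gal}(s,{\rm As}\,\pi)$.

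For part (2), I would apply the local functional equation of Tate to each of the three factors: the $\psi$-Fourier transform $\Phi\mapsto\wh\Phi$ of \eqref{fourier transform}, together with the intertwining property of the Weyl element in the Weil representation and the twist by $\omega^{-1}$, sends $Z(1-s,W\otimes\omega^{-1},\wh\Phi)$ to the product of the three ``dual'' Tate integrals. Multiplying the three gamma factors $\gamma(s,\mu|_{\R^\x},\psi)\,\gamma(s,\nu|_{\R^\x},\psi)\,\gamma(s,\mu\nu^\sigma,\psi_\C)$ and carrying along the prefactors from part (1) then produces the functional equation in the normalization of \eqref{FEasai}, identifying $\varepsilon_{\rm RS}(s,{\rm As}\,\pi,\psi,\xi)$ with that product up to an explicit monomial. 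Comparing with $\varepsilon_{\rm Gal}(s,{\rm As}\,\pi,\psi)=\lambda_{\C/\R}(\psi)\,\varepsilon(s,\mu|_{\R^\x},\psi)\,\varepsilon(s,\nu|_{\R^\x},\psi)\,\varepsilon(s,\mu\nu^\sigma,\psi_\C)$ yields \eqref{E:relation between epsilon factors}; the correction $\omega^{-1}(\xi)|\xi|_\C^{-s+\frac12}\lambda_{\C/\R}(\psi)$ is the combined effect of changing the additive character from $\psi_\xi$ to $\psi_\C=\psi\circ\tr_{\C/\R}$ in the $\C^\x$-factor---which is where the powers of $|\xi|_\C$ and the value of $\omega$ at $\xi$ enter, via the standard dependence of $\varepsilon$-factors on the additive character---together with the Langlands constant relating the $\C$- and $\R$-normalizations for the third factor. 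Part (3) is then obtained by choosing, in each Tate factor, the standard test function realizing the $L$-factor exactly (a Gaussian, or a Gaussian times a monomial, over $\R$; a function of the shape $\bar z^{|n|}e^{-2\pi|z|^2}$ over $\C$) and pulling these back through the reduction to a single pair $(\Psi,\Phi)$ with $W=W^{\psi_\xi}_\Psi\in\cW(\pi,\psi_\xi)_0$ and $\Phi\in\cS(\R^2,\psi)$, whence $Z(s,W,\Phi)=|a|^{-2s}|\xi|_\C^{-s/2}L_{\rm Gal}(s,{\rm As}\,\pi)$.

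The main obstacle is the unfolding step: proving cleanly that the Asai zeta integral with $W=W^{\psi_\xi}_\Psi$ factors as a product of three Tate integrals requires a careful manipulation of the Weil-representation formulas for $\GL_2(\C)$ restricted to $\GL_2(\R)$---parallel to, but genuinely different from, the split $\GL_2\times\GL_2$ situation treated by Jacquet---and scrupulous bookkeeping of the normalizing constants, namely the powers of $|a|$ and $|\xi|_\C$, the self-dual measures, and the Langlands constant $\lambda_{\C/\R}(\psi)$. A secondary point requiring care is confirming that the ${\rm U}_2(\R)$-finite Whittaker vectors correspond exactly to the polynomial-times-Gaussian test functions and that the vertical-strip bounds defining $\cL({\rm As}\,\pi)$ are preserved under the reduction; these are the places where one genuinely has to work rather than merely quote Tate's thesis.
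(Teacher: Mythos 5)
Your plan follows the same general direction as the paper's argument for $K$-finite data (substitute \eqref{E:integral representation of Whittaker function}, use the Iwasawa decomposition, and reduce to Tate-type integrals), but it has a genuine gap at its central step: the Asai zeta integral does \emph{not} factor as a product of three independently adjustable Tate integrals. After the Iwasawa decomposition there remains the integral over ${\rm SO}_2(\R)$, which couples the Whittaker datum $\Psi$ and the Schwartz function $\Phi$; what one actually obtains (as in \eqref{E:basis of zeta integrals} and \eqref{E:basis for zeta integrals:explicit form}) is a finite \emph{sum} of products $\Gamma\bigl(\tfrac12(s+2\lambda_1+a_1+a_2)\bigr)\Gamma\bigl(\tfrac12(s+2\lambda_2+b_1+b_2)\bigr)\Gamma\bigl(s+\lambda_1+\lambda_2+\tfrac{c_1+c_2}{2}\bigr)$ subject only to the parity constraints \eqref{E:non-zero conditions for zeta integrals}. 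In particular the ``complex'' factor occurs with $c_1+c_2$ as small as $\epsilon_0$, not $n_0$, so the individual terms have strictly more poles than $L_{\rm Gal}(s,{\rm As}\,\pi)$ whenever $n_0>\epsilon_0$. Consequently the factor-by-factor Tate argument only shows that $Z(s,W,\Phi)$ is a polynomial multiple of $E_\pi(s)=\zeta_\R(s+2\lambda_1+\epsilon_1)\zeta_\R(s+2\lambda_2+\epsilon_2)\zeta_\C(s+\lambda_1+\lambda_2+\epsilon_0/2)$, which is weaker than part (1); and part (3) cannot be obtained by ``choosing the standard test function in each factor,'' because the three factors are not independent — they are all produced from the single pair $(\Psi,\Phi)$ through the coupled ${\rm SO}_2$-integral. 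This is exactly why the paper needs Lemma \ref{L:main lemma}: the exact evaluation is carried out with the vector-valued Whittaker functions $\vec{W}_\pi^{(n)}$ of minimal (or next-to-minimal) $K$-type, the combinatorial identity of Lemma \ref{L:combinatorial identity} to collapse the binomial sum of Gamma products, and a five-case analysis; the delicacy is visible in the remark that in Case 5 the natural choice $W=\vec{W}_\pi$ gives $Z(s,\vec W_\pi,\Phi)\equiv 0$. The identification of the g.c.d.\ with $L_{\rm Gal}$ is then deduced indirectly in \subsecref{SS:matching L-factors} from Lemma \ref{L:main lemma} together with the functional equation, not read off from a factorization.

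Two further points your sketch passes over. For part (2), applying Tate's functional equation ``to each factor'' is not available once the factorization fails; the paper instead obtains the existence of $\gamma_{\rm RS}(s,{\rm As}\,\pi,\psi,\xi)$ from Loke's uniqueness theorem for $(\mathfrak g,{\rm O}(2))$-invariant trilinear forms applied to \eqref{E:relating zeta integrals and invariant forms}, and then computes it (and the relation \eqref{E:relation between epsilon factors}) from the explicit evaluations of Lemma \ref{L:main lemma}. Finally, parts (1) and (2) are asserted for all $W\in\cW(\pi,\psi_\xi)$ and $\Phi\in\cS(\R^2)$, not just $K$-finite data; passing from the $K$-finite case to the Casselman--Wallach completion requires the continuity-and-density argument via Jacquet's $(\phi,\psi)$-pairs (\cite[Section 4]{Jac09}), including the vertical-strip bounds needed for membership in $\cL({\rm As}\,\pi)$ — this reduction is absent from your proposal and is not a routine density remark.
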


\begin{rmk}\label{R:remark after main theorem}
Notice that if $L(s)$ is a meromorphic function on $\C$ which satisfies $(1)$ of 
\thmref{T:main theorem} and if there exist $W\in\cW(\pi,\psi_\xi)$ and $\Phi\in\cS(\R^2)$ so that
$Z(s,W,\Phi)=q(s)\,L\left(s\right)$ for some holomorphic function $q(s)$,
then $L(s)=h(s)\,L\left(s,{\rm As}\,\pi\right)$ for some holomorphic function $h(s)$ without zeros.
\end{rmk}

Following lemma is the core of our proof for \thmref{T:main theorem}, whose proof will occupy in
\secref{S:proof of main lemma and the relation between epsilon factors}.

\begin{lm}\label{L:main lemma}
There exist $W\in\cW(\pi,\psi_\xi)_0$ and $\Phi\in\cS(\R^2,\psi)$ such that 
\begin{align*}
Z(s,W,\Phi)
&=
c\,|a|^{-2s}|\xi|_\C^{-s/2}\,L_{\rm Gal}\left(s,{\rm As}\,\pi\right),\\
Z\left(1-s,W\ot\omega^{-1},\wh{\Phi}\right)
&=
c^{\vee}\,|a|^{-2+2s}|\xi|_\C^{-1/2+s/2}\,L_{\rm Gal}\left(1-s,{\rm As}\,\pi^{\vee}\right),
\end{align*}
for some non-zero constants $c$ and $c^{\vee}$.
\end{lm}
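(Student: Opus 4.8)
The plan is to exhibit explicit ``good'' test vectors, following the strategy Jacquet used for Rankin--Selberg integrals in \cite[\S17]{Jac72} and \cite[\S2]{Jac09}, adapted to the Asai integral \eqref{zeta integral} over $\GL_2(\R)\subset\GL_2(\C)$. I would realize $W$ through the Weil-representation formula \eqref{E:integral representation of Whittaker function}, taking $\Psi\in\cS(\C^2)$ to be a Gaussian monomial adapted to $\psi_\xi$ --- a monomial in $u,\bar u,v,\bar v$ times $e^{-2\pi|a|\,|\xi|_\C^{1/2}(|u|_\C+|v|_\C)}$ --- so that $W:=W^{\psi_\xi}_\Psi$ lies in $\cW(\pi,\psi_\xi)_0$ (the monomial factor records the ${\rm U}_2(\R)$-type), and taking $\Phi(x,y)=p(x,y)\,e^{-\pi|a|(x^2+y^2)}\in\cS(\R^2,\psi)$ with $p$ a monomial. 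The monomial factors in $\Psi$ and $\Phi$ will ultimately be pinned down by the parameters $n_1,n_2$.

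First I would substitute \eqref{E:integral representation of Whittaker function} into \eqref{zeta integral} and interchange the order of integration, legitimate for ${\rm Re}(s)$ large by absolute convergence, the resulting closed form then being continued meromorphically. Feeding in the explicit formulas for the Weil representation $\omega_{\psi_\xi}$ of $\GL_2(\C)$ (on the unipotent radical it acts by multiplication by a character built from $\psi_\xi$, on the torus and centre by scaling, and the Weyl element acts by the Fourier transform on $\cS(\C^2)$; see \cite[\S1]{JL70}) and using Bruhat- or Iwasawa-type coordinates on $U(\R)\backslash\GL_2(\R)$, the Asai zeta integral telescopes: the unipotent coordinate integrates to a partial Fourier transform of $\Phi$, and one is left with iterated integrals over $\C^\times$ (the Weil variable $z$) and over the diagonal torus of $\GL_2(\R)$. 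Since $\Psi$ and $\Phi$ are Gaussian monomials, each of these is an archimedean Tate integral of a Gaussian monomial, computable in closed form.

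The heart of the matter is that these integrals factor into three one-dimensional Mellin transforms reproducing exactly the three factors of $L_{\rm Gal}(s,\As\pi)$: the $\C^\times$-integral carries the character $\mu\nu^{-1}$, which the conjugation built into the $\GL_2(\R)\subset\GL_2(\C)$ Weil formulas turns into $\mu\nu^\sigma$, and yields $L(s,\mu\nu^\sigma)$; the two $\R^\times$-integrals carry $\mu|_{\R^\times}$ and $\nu|_{\R^\times}$ (the latter coupled with $\widehat\Phi$) and yield $L(s,\mu|_{\R^\times})$ and $L(s,\nu|_{\R^\times})$. Tracking the substitutions (rescaling $z$ by $y$, rescaling torus variables by powers of $|\xi|_\C$, and the factor $\mu(\det g)|\det g|_\R^{s+1}$) produces the normalizing factor $|a|^{-2s}|\xi|_\C^{-s/2}$, while the leftover scalar $c$ is an explicit product of Gaussian normalization constants and binomial coefficients coming from the monomial factors. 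The genuinely delicate point --- and the main obstacle --- is to choose the three monomials \emph{simultaneously} so that every Mellin transform equals its $L$-factor on the nose, respecting the parity constraints imposed by $n_1,n_2$ so that no $\Gamma$-pole is accidentally cancelled, while keeping $c\neq0$; this archimedean subtlety has no analogue in the non-archimedean computation of \S\ref{SS:2.2}, and \cite{Jac72}, \cite{Jac09} supply only a template.

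Finally, the dual identity follows by running the same computation for $\pi^\vee\cong{\rm Ind}_{B(\C)}^{\GL_2(\C)}(\mu^{-1},\nu^{-1})$ at the argument $1-s$. Since $\cS(\R^2,\psi)$ is stable under the Fourier transform \eqref{fourier transform} we have $\widehat\Phi\in\cS(\R^2,\psi)$; and $W\ot\omega^{-1}$ lies in $\cW(\pi^\vee,\psi_\xi)_0$, governed by the Weil datum $\widehat\Psi$, which is again an admissible Gaussian monomial. Hence the analysis above applies verbatim with $\mu,\nu,s$ replaced by $\mu^{-1},\nu^{-1},1-s$ and gives $Z(1-s,W\ot\omega^{-1},\widehat\Phi)=c^\vee\,|a|^{-2+2s}|\xi|_\C^{-1/2+s/2}\,L_{\rm Gal}(1-s,\As\pi^\vee)$ with $c^\vee\neq0$, completing the proof.
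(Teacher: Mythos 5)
You have the right framework (the Weil-model Whittaker functions attached to Gaussian monomials, Iwasawa coordinates, reduction to archimedean Tate/Mellin integrals), and this is indeed how the paper proceeds; but the step you yourself flag as ``the genuinely delicate point'' is precisely the content of the lemma, and your proposed resolution --- choosing a \emph{single} monomial $\Psi$ and a single monomial $\Phi$ so that ``every Mellin transform equals its $L$-factor on the nose'' --- cannot work in general. With $W=W_{(\ul a,\ul b)}$ coming from $\Psi_{\ul a}\ot\Psi_{\ul b}$ and $\Phi=\Phi_{\ul c}$, the zeta integral is (up to units) $\Gamma\bigl(\tfrac{s}{2}+\lambda_1+\tfrac{a_1+a_2}{2}\bigr)\Gamma\bigl(\tfrac{s}{2}+\lambda_2+\tfrac{b_1+b_2}{2}\bigr)\Gamma\bigl(s+\lambda_1+\lambda_2+\tfrac{c_1+c_2}{2}\bigr)$, and the nonvanishing constraint $a_1-a_2+n_1=b_1-b_2+n_2$ of \lmref{L:integration formula for Tate C x R} forces $a_1+a_2+b_1+b_2\geq n_0$. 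Matching the first two Gamma factors of $L_{\rm Gal}(s,{\rm As}\,\pi)$ would require $a_1+a_2=\epsilon_1$ and $b_1+b_2=\epsilon_2$, which is incompatible with that inequality as soon as $n_0\geq 2$ (apart from one boundary parity case); so a single monomial pair only ever produces a holomorphic multiple of $L_{\rm Gal}$ with poles missing, not $L_{\rm Gal}$ itself.

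What the paper actually does to close this gap is: (i) take the vector-valued Whittaker function $\vec{W}_\pi$ of minimal ${\rm SU}_2(\R)$-type, whose pairing against powers $(X+iY)^{j}(X-iY)^{n_0-j}$ is a \emph{binomial-coefficient-weighted sum} of the monomial Whittaker functions $W_{((0,\ell),(n_0-\ell,0))}$ (equation \eqref{E:Whittaker function of minimal type})); (ii) collapse the resulting sum of products of Gammas by the combinatorial identity of \lmref{L:combinatorial identity}, $\sum_{\ell}\binom{N}{\ell}\Gamma(z+\ell)\Gamma(w-\ell)=\Gamma(z)\Gamma(w-N)\Gamma(z+w)/\Gamma(z+w-N)$, which recovers the low-argument Gammas at the cost of a denominator $\Gamma(z+w-N)$; and (iii) choose $\Phi$ so that the $\Phi$-side Tate integral supplies exactly that denominator, leaving $L_{\rm Gal}(s,{\rm As}\,\pi)$ on the nose, with the dual identity checked by the same computation for $\zeta(1-s,W,\omega_0^{-1})$ and $\wh\Phi$. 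There is also a genuinely exceptional case you would miss: when $n_0=0$ and $n_1$ is odd, the minimal-type choice gives $Z(s,\vec W_\pi,\Phi)=0$ for \emph{all} $\Phi\in\cS(\R^2,\psi)$, and one must instead use the $\rho_2$-isotypic Whittaker function $\vec W_\pi^{(2)}$ of \eqref{E:Whittaker function of type 2}. None of this is supplied or even sketched in your proposal, so as written it does not prove the lemma; it reduces it to the statement that suitable data exist, which is the assertion to be proved.
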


Suppose \thmref{T:main theorem} holds for some $\psi$ and $\xi$, we see what happen if we replace
$\psi$ by $\psi^b(x):=\psi(bx)$ and $\xi$ by $\xi'=c\xi$ for some $b, c\in\R^{\x}$. 
Consider the following $\C$-linear isomorphisms
$\cW(\pi,\psi_\xi)\stackrel{\sim}{\longto}\cW(\pi,\psi^b_{\xi'}),\, W\to W^{bc}$
and $\cS(\R^2)\stackrel{\sim}{\longto}\cS(\R^2),\,\Phi\to\Phi^b$, where
\[
W^{bc}\left(g\right)
=
W\left(
\begin{pmatrix}
bc&0\\0&1
\end{pmatrix}g
\right)
\quad\text{and}\quad
\Phi^b(x,y)
=
\Phi\left(|b|^{1/2}x,|b|^{1/2}y\right).
\]
Notice that $\Phi\mapsto\Phi^b$ induces an isomorphism between the spaces $\cS(\R^2,\psi)$ and
$\cS(\R^2,\psi^b)$.

One check that 
\[
\wh{\Phi^b}(x,y)=\wh{\Phi}\left(|b|^{1/2}x,|b|^{1/2}y\right).
\]
where the Fourier transform on the LHS is with respect to $\psi^b$, while the Fourier 
transform on the RHS is with respect to $\psi$. Moreover, we have
\begin{align*}
Z\left(s,W^{bc},\Phi^b\right)
&=
\omega\left(|b|^{1/2}\right)^{-1}|b^2c|^{-s}
Z(s,W,\Phi),\\
Z\left(s,W^{bc}\ot\omega^{-1},\wh{\Phi^b}\right)
&=
\omega\left(|b|^{1/2}\right)^{-1}|b^2c|^{-s}
Z\left(s,W\ot\omega^{-1},\wh{\Phi}\right).
\end{align*}
From these we obtain the following relation
\begin{equation}\label{E:dependence of epsilon factor on psi and xi}
\varepsilon_{{\rm RS}}\left(s,{\rm As}\,\pi,\psi^b,c\xi\right)
=
\omega\left(b^2c\right)|b^2c|^{2s-1}
\varepsilon_{{\rm RS}}\left(s,{\rm As}\,\pi,\psi,\xi\right).
\end{equation}
Since 
\[
\varepsilon\left(s,{\rm As}\,\pi,\psi^b\right)
=
{\rm sgn}(b)\omega(b^2)|b|^{4s-2}
\varepsilon\left(s,{\rm As}\,\pi,\psi\right),
\]
this shows that if \thmref{T:main theorem} holds for some $\psi$ and $\xi$, then it holds for all 
non-trivial additive character $\psi$ and all non-zero element $\xi$ with ${\rm tr}_{\C/\R}(\xi)=0$. 

\subsection{Proof of \thmref{T:main theorem}}
Taking \lmref{L:main lemma} for granted in this section, we prove \thmref{T:main theorem} except 
the relation \eqref{E:relation between epsilon factors}, whose proof will be given in 
\secref{S:proof of main lemma and the relation between epsilon factors}.
By the remark at the end of \subsecref{SS:main result}, we may assume $\psi(x)=e^{2\pi ix}$ and 
$\xi=\sqrt{-1}$. Also, it suffices to prove first two assertions, as the last is already contained 
in \lmref{L:main lemma}.  Let $\mu$ and $\nu$ be as in the equation 
\eqref{E:equation of mu and nu}. Since the character $\psi_\xi$ has fixed, we will suppress the notation $\psi_\xi$ in 
$W^{\psi_\xi}_{\Psi}$ and $W_{\psi_\xi,f}$ from now on. 
Put $n_0=|n_1-n_2|$.

\subsubsection{Convergence}
We show that the zeta integral $Z(s,W,\Phi)$ converges absolutely when 
\[
{\rm Re}(s)>2\,{\rm max}\stt{-{\rm Re}(\lambda_1),-{\rm Re}(\lambda_2)}.
\]
It will be convenience to use the following notation
\[
A\ll_{x,y,z,...}B
\] 
to indicate there is a constant $C>0$ depending at most upon $x,y,z,...$ so that $|A|\leq C|B|$. 

Let $r={\rm Re}(s)$, $r_1={\rm Re}(\lambda_1)$ and $r_2={\rm Re}(\lambda_2)$. Formally we have
\begin{align}\label{E:expansion of zeta integral}
\begin{split}
Z(s,W,\Phi)
&=
\int_{{\rm SO}_2(\R)}\int_{\R^{\x}}
W\left(\pMX y001 k\right)|y|^{s-1}
\int_{\R^{\x}_+}\Phi((0,t)k)\omega(t)t^{2s}d^{\x}t\,d^{\x}y\,dk,\\
&=
\int_{{\rm SO}_2(\R)}
\left(
\int_{\R^{\x}}
W\left(\pMX y001 k\right)|y|^{s-1}d^{\x}y
\right)
\left(
\int_{\R^{\x}_+}\Phi((0,t)k)\omega(t)t^{2s}d^{\x}t
\right)dk,
\end{split}
\end{align} 
by the Iwasawa decomposition.

We may assume $W=W_\Psi$ for some $\Psi\in\cS(\C^2)$. Then by equation
\eqref{E:integral representation of Whittaker function}, we have
\begin{equation}\label{E:expending Tate integral for C x R}
\int_{\R^{\x}}
W\left(
\pMX y001 k
\right)|y|^{s-1}d^{\x}y
=
\int_{\R^{\x}}\mu(y)|y|^s\int_{\C^{\x}}
\omega_{\psi_\xi}(k)\Psi\left(yz,z^{-1}\right)\mu\nu^{-1}(z)d^{\x}zd^{\x}y.
\end{equation}
Since ${\rm SO}_2(\R)$ is compact, by the continuity of the Weil representation and 
\cite[Lemme 5]{Wei64}, there exists $\Psi_0\in\cS(\C^2)$ such that
\[
\omega_{\psi_\xi}(k)\Psi\ll_{\Psi}\Psi_0,
\]
for all $k\in {\rm SO}_2(\R)$. On the other hand, for every positive integer $N$, we have
\[
\Psi_0\left(yz,z^{-1}\right)
\ll_N
\left(1+|yz|_\C+|z|_\C^{-1}\right)^{-N},
\]
by \cite[Lemma 3.1 (i)]{Jac09}. It follows that
\begin{align}\label{E:estimate for the first term}
\begin{split}
\int_{\R^{\x}}
W\left(
\pMX y001 k
\right)|y|^{s-1}d^{\x}y
&\ll_{\Psi, N}
\int_{\R^{\x}}\int_{\C^{\x}}
\frac{|y|^{r+2r_1}|z|_\C^{r_1-r_2}}{\left(1+|yz|_\C+|z|_\C^{-1}\right)^N}\,d^{\x}zd^{\x}y\\
&\ll_{\Psi,N}
\int_{\R_+^{\x}}\int_{\R_+^{\x}}
\frac{y^{r+2r_1} t^{2r_1-2r_2}}{\left(1+y^2t^2+t^{-2}\right)^N}\,d^{\x}yd^{\x}t\\
&\ll_{\Psi,N}
\int_0^\infty\int_0^\infty
\frac{y^{r+2r_1-1} t^{r+2r_2-1}}{\left(1+y^2+t^2\right)^N}\,dydt\\
&\ll_{\Psi,N}
\int_0^\infty\int_0^\infty
\frac{y^{r+2r_1-1} t^{r+2r_2-1}}{(1+y^2)^{N/2}(1+t^2)^{N/2}}\,dydt.
\end{split}
\end{align}
In the third line, we change the variables $y\mapsto t^{-1}y$, $t^{-1}\mapsto t$, while in the 
last line, we use the inequality
\[
(1+y^2+t^2)^2\geq (1+y^2)(1+t^2).
\]

Applying \cite[Lemma 3.1 (i)]{Jac09} again, we see that
\begin{equation}\label{E:estimate for the second term}
\int_{\R^{\x}_+}
\Phi((0,t)k)\omega(t)t^{2s}d^{\x}t
\ll_{\Phi,N}
\int_0^\infty
\frac{t^{2r+2r_1+2r_2-1}}{\left(1+t^2\right)^N}\,dt.
\end{equation}
By equations \eqref{E:estimate for the first term} and \eqref{E:estimate for the second term}, 
we find that $Z(s,W,\Phi)$ converges absolutely when
\[
0<r+2r_1<N,
\quad
0<r+2r_2<N,
\quad
0<2r+2r_1+2r_2<2N.
\]
Since $N>0$ is arbitrary, our assertion follows at once.
Of course the constants appeared in our estimate in \eqref{E:estimate for the first term} and
\eqref{E:estimate for the second term} depend on the choice of Haar measures, but different choice
certainly do not affect our claim.

\subsubsection{Reduction to the $K$-finite datum}
Using the results in \cite[Section 4]{Jac09}, we reduce the proof of \thmref{T:main theorem}
to the $K$-finite datum. More precisely, we show that if the assertions of \thmref{T:main theorem}  
hold for $W\in\cW(\pi,\psi_\xi)_0$ and $\Phi\in\cS(\R^2,\psi)$, then they also hold for 
$W\in\cW(\pi,\psi_\xi)$ and $\Phi\in\cS(\R^2)$.

Observed that if \thmref{T:main theorem} holds for $\pi$, then it also holds for 
$\pi\ot|\mbox{ }|_\C^u$ for every $u\in\C$. Here $\pi\ot|\mbox{ }|_\C^r$ denote the representation of 
${\rm GL}_2(\C)$ on the same space $V$ with the action 
\[
\left(\pi\ot|\mbox{ }|_\C^u\right)(g)v=|{\rm det(g)}|_\C^u\,\pi(g)v,
\quad 
g\in{\rm GL}_2(\C),\,v\in V.
\]
From this observation we may assume $Z(s,W,\Phi)$ converges absolutely when ${\rm Re}(s)\geq 0$.

Zeta integral can be written as
\[
Z(s,W,\Phi)
=
\int_{\R^{\x}}
|y|^s
\int_{\U(\R)\backslash{\rm SL}_2(\R)}
W\left(
\pMX y001 g
\right)
\Phi((0,1)g)dgd^{\x}y.
\]
Since $Z(s,W,\Phi)$ converges absolutely when ${\rm Re}(s)\geq 0$, following integrals are also
absolute convergent
\begin{align}\label{E:sigma psi pair}
\begin{split}
\varphi(y)
&=
\omega(\xi)\int_{\U(\R)\backslash{\rm SL}_2(\R)}
W\left(\pMX y001 g\right)\Phi((0,1)g)dg,\\
\t{\varphi}(y)
&=
\int_{\U(\R)\backslash{\rm SL}_2(\R)}
W\left(\pMX y001 g\right)\omega^{-1}(y)\wh{\Phi}((0,1)g)dg.
\end{split}
\end{align}
Recall that $\xi=\sqrt{-1}$.

Suppose \thmref{T:main theorem} holds for $W\in\cW(\pi,\psi_\xi)_0$ and $\Phi\in\cS(\R^2,\psi)$.
In this case, $Z(s,W,\Phi)$ is of the form 
\[
P(s)\,L_{{\rm Gal}}\left(s,{\rm As}\,\pi\right),
\]
for some $P(s)\in\C[s]$. We note that $Z(s,W,\Phi)$ is an element of the space $\cL({\rm As}\,\pi)$. 
Indeed, this follows immediately from the asymptotic behaviour of the gamma function
\[
\Gamma(x+iy)\sim (2\pi)^{\frac{1}{2}}|y|^{x-\frac{1}{2}}e^{-\frac{\pi}{2}|y|} 
\]
for $x$ fixed and $|y|\to\infty$, together with the Phragmen-Lindelof principle.

Let $\phi$ be the representation of $W_\R$ defined by 
\[
\phi=\mu|_{\R^{\x}}
\op
\nu|_{\R^{\x}}
\op{\rm Ind}_{W_\C}^{W_\R}\left(\mu\nu^\sigma\right).
\] 
In \cite[Section 4]{Jac09}, Jacquet defined a notion of so called $(\phi,\psi)$ pairs,
and he used this to reduce the proofs to the $K$-finite datum.

Let $\varphi, \t{\varphi}$ be the functions given by equation \eqref{E:sigma psi pair} with 
$W=W_f\in\cW(\pi,\psi_\xi)_0$ and $\Phi\in\cS(\R^2,\psi)$. Here $f\in\cB(\mu,\nu)_0$.
By our assumption and \cite[Proposition 4.2]{Jac09}, we have
\[
\int_{\R^{\x}}
\t{\varphi}(y)\t{h}(y)d^{\x}y
=
\int_{\R^{\x}}
\varphi(y)h(y)d^{\x}y,
\]
for every $(\phi,\psi)$ pair $(h,\t{h})$. By equation \eqref{E:sigma psi pair}, 
this is equal to the following equality
\begin{align}\label{E:main identity for the reduction}
\begin{split}
&\int_{\U(\R)\backslash\GL_2(\R)}
W_f\left(\pMX y001 g\right)\omega^{-1}({\rm det}(g))\wh{\Phi}((0,1)g)\t{h}({\rm det}(g))dg\\
&=
\omega(\xi)\int_{\U(\R)\backslash\GL_2(\R)}
W_f\left(\pMX y001 g\right)\Phi((0,1)g)h({\rm det}(g))dg,
\end{split}
\end{align}
for every $(\phi,\psi)$ pair $(h,\t{h})$, and for every $f\in\cB(\mu,\nu)_0$, 
$\Phi\in\cS(\R^2,\psi)$.

Notice that both sides of \eqref{E:main identity for the reduction} are absolute convergent
for every $f\in\cB(\mu,\nu)$ and $\Phi\in\cS(\R^2)$. Indeed, for every integer $N$, 
we have
\[
h(y)\ll_{h,N}|y|^N
\quad\text{and}\quad
\t{h}(y)\ll_{\t{h},N}|y|^N,
\]
for all $y\in\R^{\x}$. Combining these with \cite[Lemma 3.5, Proposition 3.3]{Jac09}, we see
that, when $h, \t{h}$ are fixed, both sides in \eqref{E:main identity for the reduction} are 
continuous functions of $(f,\Phi)$. As the equality holds for all $f\in\cB(\mu,\nu)_0$ and
$\Phi\in\cS(\R^2,\psi)$, we find that it also holds for all $f\in\cB(\mu,\nu)$ and 
$\Phi\in\cS(\R^2)$ by the continuity and an argument of density. Since $h, \t{h}$ are arbitrary
$(\phi,\psi)$ pair, we can now apply \cite[Proposition 4.1]{Jac09}, and then
\thmref{T:main theorem} follows. 

Because of this observation, for the rests of this section, we are devoted to prove \thmref{T:main theorem} for the
$K$-finite datum. 
\subsubsection{Notation}\label{SS:notation}
We introduce some notations which will be used in our proof of \thmref{T:main theorem} and
\lmref{L:main lemma} for the $K$-finite datum.
Let $\mathfrak{g}={\rm Lie}(\GL_2(\R))\ot_\R\C$. Then $\cS(\R^2,\psi)$ is a 
$(\mathfrak{g}, {\rm O}_2(\R))$-module with the actions
\[
\rho(X)\Phi(x,y)
=
\frac{d}{dt}\Phi\left((x,y)e^{tX}\right)|_{t=0},
\quad
\rho(k)\Phi(x,y)
=
\Phi((x,y)k),
\]
for $X\in\mathfrak{g}$ and $k\in {\rm O}_2(\R)$. 

For a pair $\ul{a}=(a_1,a_2)$ of non-negative integers, we set
\begin{equation}\label{E:basis for S(R^2,psi) and S(C,psi_xi)}
\Phi_{\ul{a}}(x,y)
=
(x+iy)^{a_1}(x-iy)^{a_2}\,e^{-\pi(x^2+y^2)}
\quad\text{and}\quad
\Psi_{\ul{a}}(z)=z^{a_1}\b{z}^{a_2}e^{-2\pi z\b{z}},
\end{equation}
for $x,y\in\R$ and $z\in\C$.

These functions have the following properties
\begin{equation}\label{E:translation properties of test fucntions}
\rho(k(\theta))\Phi_{\ul{a}}
=
e^{i(a_1-a_2)\theta}\Phi_{\ul{a}}
\quad\text{and}\quad
\Psi_{\ul{a}}(ze^{i\theta})
=
e^{i(a_1-a_2)\theta}\Psi_{\ul{a}}(z).
\end{equation}
Here
\begin{equation}\label{E:elements of SO(2)}
k(\theta)
:=
\begin{pmatrix}
{\rm cos}\theta&{\rm sin}\theta\\
-{\rm sin}\theta&{\rm cos}\theta
\end{pmatrix}\in {\rm SO}_2(\R).
\end{equation}

There is a similar space attached to $\C^2$ and $\psi_\xi$ given by
\[
\cS(\C^2,\psi_\xi)
=
\stt{p(z,\b{z},w,\b{w})\,e^{-2\pi(z\b{z}+w\b{w})}\mid p(x_1,x_2,x_3,x_4)\in\C[x_1,x_2,x_3,x_4]}.
\]
Let $\ul{b}=(b_1,b_2)$ be another pair of non-negative integers and define
\begin{equation}\label{E:basis for S(C^2,psi_xi)}
\Psi_{\ul{a}}\ot\Psi_{\ul{b}}(z,w)
:=
\Psi_{\ul{a}}(z)\Psi_{\ul{b}}(w),\quad z,w\in\C.
\end{equation}
Then we have
\begin{equation}\label{E:decomposition of S(R^2,psi) and S(R^2,psi)}
\cS(\R^2,\psi)
=
\bigoplus_{\ul{c}}\C\,\Phi_{\ul{c}}
\quad\text{and}\quad
\cS(\C^2,\psi_\xi)
=
\bigoplus_{\ul{a},\ul{b}}\C\,\Psi_{\ul{a}}\ot\Psi_{\ul{b}},
\end{equation}
where $\ul{a}$, $\ul{b}$ and $\ul{c}$ run through all pairs of non-negative integers. 

\subsubsection{Haar measures}\label{SS:Haar measures}
Although the choice of Haar measures on $\GL_2(\R)$ and $\U(\R)$ are not really important,
we fix the choice on various groups in this and the next section for the sake of convenience. 

The Haar measure $dx$ on $\R$ is the usual Lebesgue measure and the Haar measure $d^{\x}x$ on 
$\R^{\x}$ is given by $d^{\x}x=|x|^{-1}dx$. Since $\U(\R)\cong\R$, the measure on $\U(\R)$ is
defined. Haar measure $dz$ on $\C$ is twice of the usual Lebesgue measure and the Haar measure 
$d^{\x}z$ on $\C^{\x}$ is $d^{\x}z=|z|_\C^{-1}dz$. 

The Haar measure $dg$ on $\GL_2(\R)$ is given by
\[
dg=\frac{dt}{t}\frac{dxdy}{|y|^2}dk
\]
for 
$
g
= 
\pMX t00t
\begin{pmatrix}
1&x\\0&1
\end{pmatrix}
\begin{pmatrix}
y&0\\0&1
\end{pmatrix}k
$ 
with $x\in\R, y\in\R^{\x}, t\in\R_+^{\x}, k\in {\rm SO}_2(\R)$, where $dx, dy, dt$ are the usual Lebesgue
measures and $dk$ is the Haar measure on ${\rm SO}_2(\R)$ such that ${\rm Vol}({\rm SO}_2(\R), dk)=1$. 

Finally, the measure on the quotient space $\U(\R)\backslash \GL_2(\R)$ is the unique 
quotient measure induced from the measure $dg$ on $\GL_2(\R)$ and the measure $dx$ on $\U(\R)$. 

We stress that when we compute the Fourier transform $\wh{\Phi}$ of an element $\Phi\in\cS(\R^2)$,
the Haar measures $du, dv$ should be self-dual with respect to $\psi$. Since we have assume that
$\psi(x)=e^{2\pi i x}$, the measures $du, dv$ are just the usual Lebesgue measure on $\R$.

\subsubsection{Meromorphic continuation}
We show that the zeta integral $Z(s,W,\Phi)$ admits a meromorphic continuation to whole 
complex plane.

By \cite[Section 6]{JL70}, the space $\cW(\pi,\psi_\xi)_0$ is spanned by the functions $W_\Psi$ 
(Cf. equation \eqref{E:integral representation of Whittaker function}) with 
$\Psi\in\cS(\C^2,\psi_\xi)$. Let $\ul{a}=(a_1,a_2)$ and $\ul{b}=(b_1,b_2)$ be two pairs of 
non-negative integers. We put
\begin{equation}\label{E:basis for Whittaker functions}
W_{(\ul{a},\ul{b})}
=
W_{\Psi_{\ul{a}}\ot\Psi_{\ul{b}}}.
\end{equation}
Recall that $\Psi_{\ul{a}}\ot\Psi_{\ul{b}}$ is given by 
\eqref{E:basis for S(R^2,psi) and S(C,psi_xi)} and
\eqref{E:basis for S(C^2,psi_xi)}. Also, the elements $W_{(\ul{a},\ul{b})}$ span 
$\cW(\pi,\psi_\xi)_0$ by \eqref{E:decomposition of S(R^2,psi) and S(R^2,psi)}.

Let $s\in\C$, $W\in\cW(\pi,\psi_\xi)_0$ and $\chi$ be a character of $\R^{\x}$. We define an 
integral attached to $W$ and $\chi$, which is analogy to the classical Tate integral.
\begin{equation}\label{E:Tate integral for C x R}
\zeta(s,W,\chi)
=
\int_{\R^{\x}}W\left(\pMX y001\right) \chi(y)|y|^{s-1}d^{\x}y.
\end{equation}
\begin{lm}\label{L:integration formula for Tate C x R}
We have
\begin{itemize}
\item[(1)] 
If $a_1-a_2+n_1\neq b_1-b_2+n_2$, then $W_{(\ul{a},\ul{b})}=0$.
\item[(2)]
Suppose $\chi(y)=|y|^\lambda {\rm sgn}^m(y)$ for some $\lambda\in\C$ and $m\in\stt{0,1}$.
The integral $\zeta(s,W,\chi)$ converges absolutely for 
${\rm Re}(s)> 2\,{\rm max}\left\lbrace -{\rm Re}(\lambda_1),-{\rm Re}(\lambda_2)
\right\rbrace-{\rm Re}(\lambda)$ and
has meromorphic continuation to the whole complex plane. More precisely, if 
$W=W_{(\ul{a},\ul{b})}$ with $a_1-a_2+n_1=b_1-b_2+n_2$, then
\begin{align}\label{E:integration formula for Tate C x R}
\begin{split}
\zeta(s,W_{(\ul{a},\ul{b})},\chi)
&=
2^{-1}\left(1+(-1)^{n_1+m+a_1+a_2}\right)
(2\pi)^{-\left(s+\lambda+\lambda_1+\lambda_2-1+\frac{a_1+a_2+b_1+b_2}{2}\right)}\\
&\x\Gamma\left(\frac{1}{2}(s+\lambda+2\lambda_1+a_1+a_2)\right)
\Gamma\left(\frac{1}{2}(s+\lambda+2\lambda_2+b_1+b_2)\right).
\end{split}
\end{align}
In particular, the integral $\zeta(s,W_{(\ul{a},\ul{b})},\chi)$ vanishes if $n_1+m+a_1+a_2$ is odd.
\end{itemize}
\end{lm}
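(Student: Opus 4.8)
The plan is to compute $W_{(\ul a,\ul b)}$ explicitly on the diagonal torus by feeding the Gaussian test functions $\Psi_{\ul a}\ot\Psi_{\ul b}$ into the Jacquet--Langlands integral representation \eqref{E:integral representation of Whittaker function}, and then to recognise the resulting Tate integral as a product of two archimedean $\GL_1$-integrals over $\R$. Concretely, for $y\in\R^\x$ one has (by definition \eqref{E:basis for Whittaker functions})
\[
W_{(\ul a,\ul b)}\!\left(\pMX y001\right)
=\mu(y)\,|y|_\C^{1/2}\int_{\C^\x}\Bigl(\omega_{\psi_\xi}\!\left(\pMX y001\right)\bigl(\Psi_{\ul a}\ot\Psi_{\ul b}\bigr)\Bigr)(z,z^{-1})\,\mu\nu^{-1}(z)\,d^\x z,
\]
and, by the explicit formulas for the Weil representation in \cite[\S1]{JL70}, the operator $\omega_{\psi_\xi}\!\left(\pMX y001\right)$ acts on $\cS(\C^2)$ by an elementary rescaling of the first coordinate by $y$. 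Writing $z=\rho e^{i\phi}$ and inserting \eqref{E:basis for S(R^2,psi) and S(C,psi_xi)}, the integrand becomes a monomial in $y$, $\rho$ and $e^{i\phi}$ times $\mu\nu^{-1}(\rho e^{i\phi})$ times the Gaussian $e^{-2\pi(y^2\rho^2+\rho^{-2})}$, so everything reduces to elementary one-variable integrals.

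Part (1) is then a weight computation. Right translation by $\pMX{e^{i\theta}}{0}{0}{1}$ multiplies $W_{(\ul a,\ul b)}$ by $e^{i(a_1-a_2+n_1)\theta}$ --- immediate from the displayed formula, since $\Psi_{\ul a}(e^{i\theta}x)=e^{i(a_1-a_2)\theta}\Psi_{\ul a}(x)$ and $\mu(e^{i\theta})=e^{in_1\theta}$ --- and similarly right translation by $\pMX{1}{0}{0}{e^{i\theta}}$ multiplies it by $e^{i(n_1-b_1+b_2)\theta}$; taking the product, which records the right action of the centre, and comparing with the central character $\omega(e^{i\theta})=e^{i(n_1+n_2)\theta}$ forces $a_1-a_2+n_1=b_1-b_2+n_2$ whenever $W_{(\ul a,\ul b)}\neq 0$. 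Equivalently, the $\phi$-integration in the displayed formula kills the integrand identically unless this equality holds.

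For (2), assume the weight condition. Absolute convergence of $\zeta(s,W,\chi)$ --- first for $W=W_{(\ul a,\ul b)}$, and hence for every $W\in\cW(\pi,\psi_\xi)_0$ by linearity --- follows from the displayed formula: the $z$-integral is a $K$-Bessel function whose argument is proportional to $|y|$, so it decays rapidly as $|y|\to\infty$, while as $|y|\to 0$ it grows at most like a power of $|y|$ governed by ${\rm min}\{{\rm Re}\,\lambda_1,{\rm Re}\,\lambda_2\}$; together with $\chi(y)|y|^{s-1}=|y|^{\lambda+s-1}{\rm sgn}^m(y)$ this yields convergence in the asserted half-plane. For the exact evaluation, substitute the displayed formula into \eqref{E:Tate integral for C x R}; splitting $\R^\x=\R_{>0}\sqcup(-\R_{>0})$ and collecting the sign characters contributed by $\mu(y)$, by the monomial $(yz)^{a_1}\overline{(yz)}^{a_2}$, and by $\chi$ produces the prefactor $\tfrac12\bigl(1+(-1)^{n_1+m+a_1+a_2}\bigr)$ and reduces matters to twice the integral over $y>0$, while the weight condition makes the $\phi$-integration a harmless constant. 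The remaining double integral over $(y,\rho)\in\R_{>0}^2$ carries the Gaussian $e^{-2\pi(y^2\rho^2+\rho^{-2})}$, and the substitution $(y,\rho)\mapsto(p,q)=(y\rho,\rho^{-1})$ preserves the multiplicative Haar measure and turns this Gaussian into $e^{-2\pi(p^2+q^2)}$, decoupling the integral into a product of two integrals of the shape $\int_0^\infty t^{\gamma-1}e^{-2\pi t^2}\,dt=\tfrac12(2\pi)^{-\gamma/2}\Gamma(\gamma/2)$, one with $\gamma=s+\lambda+2\lambda_1+a_1+a_2$ and one with $\gamma=s+\lambda+2\lambda_2+b_1+b_2$. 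Collecting the elementary constants yields \eqref{E:integration formula for Tate C x R}; since the right-hand side there is meromorphic on $\C$, this also gives the claimed meromorphic continuation, and the vanishing when $n_1+m+a_1+a_2$ is odd is read off from the sign prefactor.

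The main obstacle is bookkeeping rather than anything conceptual: one must pin down the normalising constants in the Weil-representation formulas of \cite[\S1]{JL70} --- in particular the exact power of $|y|_\C$ in $\omega_{\psi_\xi}\!\left(\pMX y001\right)$ --- together with the factor $|\det g|_\C^{1/2}$ in \eqref{E:integral representation of Whittaker function} and the measure normalisations of \subsecref{SS:Haar measures}, accurately enough that all powers of $y$ and all factors of $\pi$ reassemble into precisely the two Gamma functions of \eqref{E:integration formula for Tate C x R}. The structural point that makes this tractable is the substitution $(y,\rho)\mapsto(y\rho,\rho^{-1})$, which is exactly what splits the $\GL_2(\C)$ computation into two $\GL_1(\R)$ computations and thereby accounts for the two factors $L(s,\mu|_{\R^\x})$ and $L(s,\nu|_{\R^\x})$ of $L_{\rm Gal}(s,{\rm As}\,\pi)$, the third factor $L(s,\mu\nu^\sigma)$ entering only through the subsequent integration against $\Phi$.
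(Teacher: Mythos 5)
Your proposal is correct and follows essentially the same route as the paper: evaluate $W_{(\ul{a},\ul{b})}$ on the torus via the integral representation \eqref{E:integral representation of Whittaker function}, kill the angular integral in polar coordinates unless $a_1-a_2+n_1=b_1-b_2+n_2$ (your equivariance argument is just a repackaging of this), and then decouple the remaining double integral by the measure-preserving substitution $(y,t)\mapsto(yt,t^{-1})$ into two one-variable Tate/Gamma integrals, with the sign splitting over $\R^{\x}$ producing the prefactor $\tfrac12\bigl(1+(-1)^{n_1+m+a_1+a_2}\bigr)$. The only difference is presentational (you defer the constant bookkeeping that the paper likewise compresses into ``these are Tate integrals''), so no further comment is needed.
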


\begin{proof}
Recall that $W_{(\ul{a},\ul{b})}=0$ if and only if 
$W_{(\ul{a},\ul{b})}\left(\pMX \alpha001\right)=0$ for all $\alpha\in\C^{\x}$. By equation
\eqref{E:integral representation of Whittaker function} and the relation 
\eqref{E:translation properties of test fucntions}, we have
\begin{align}\label{E:Whittaker function on tours}
\begin{split}
W_{(\ul{a},\ul{b})}\left(\pMX \alpha001\right)
&=
\mu(\alpha)|\alpha|_\C^{\frac{1}{2}}
\int_{\C^{\x}}\Psi_{\ul{a}}(\alpha z)\Psi_{\ul{b}}(z^{-1})\mu\nu^{-1}(z)d^{\x}z\\
&=
2\mu(\alpha)|\alpha|_\C^{\frac{1}{2}}
\int_{\R^{\x}_+}\Psi_{\ul{a}}(\alpha t)\Psi_{\ul{b}}(t^{-1})\mu\nu^{-1}(t)
\int_0^{2\pi}e^{i(a_1-a_2+b_2-b_1+n_1-n_2)\theta}d\theta\,d^{\x}t.
\end{split}
\end{align}
In particular, $W_{(\ul{a},\ul{b})}=0$ if $a_1-a_2+n_1\neq b_1-b_2+n_2$. This proves $(1)$. 

To prove $(2)$, we may assume $W=W_{(\ul{a},\ul{b})}$ with $a_1-a_2+n_1=b_1-b_2+n_2$.
Let $y\in\R^{\x}$. By equation \eqref{E:Whittaker function on tours}, 
\[
W_{(\ul{a},\ul{b})}\left(\pMX y001\right)
=
4\pi\,\mu(y)|y|
\int_{\R^{\x}_+}\Psi_{\ul{a}}(yt)\Psi_{\ul{b}}(t^{-1})\mu\nu^{-1}(t)d^{\x}t.
\]
Formally, we have
\begin{align*}
\zeta(s,W_{(\ul{a},\ul{b})},\chi)
&=
4\pi\int_{\R^{\x}}
\mu(y)|y|_\C^s
\int_{\R^{\x}_+}\Psi_{\ul{a}}(yt)\Psi_{\ul{b}}(t^{-1})\mu\nu^{-1}(t)d^{\x}td^{\x}y\\
&=
4\pi\left(\int_{\R^{\x}}\Psi_{\ul{a}}(y)\mu\chi(y)|y|^s d^{\x}y\right)
\left(\int_{\R_+^{\x}}\Psi_{\ul{b}}(t)\nu\chi(t) t^s d^{\x}t\right),
\end{align*}
after changing variables. Since these two are the Tate integrals, our assertion $(2)$ follows 
at once.
\end{proof}

We now show that $Z(s,W,\Phi)$ has a meromorphic continuation to whole complex plane. Of course
this is a direct consequence of what we have proved in the remark above 
(Cf. equation \eqref{E:relating zeta integrals and invariant forms}). But we still give a proof 
here as we need more explicit results.

By equation \eqref{E:expansion of zeta integral}, and the right ${\rm SO}_2(\R)$-finiteness 
of $W$ and $\Phi$, we see that it suffices to prove the assertion for the integral
\begin{equation}\label{E:basis of zeta integrals}
\int_{\R^{\x}}W_{(\ul{a},\ul{b})}\left(\pMX y001 \right)|y|^{s-1}d^{\x}y
\int_{\R^{\x}_+}\Phi_{\ul{c}}((0,t))\omega(t)t^{2s}d^{\x}t,
\end{equation}
where $\ul{a}=(a_1,a_2)$, $\ul{b}=(b_1,b_2)$ and $\ul{c}=(c_1,c_2)$ are three pairs of 
non-negative integers with $a_1-a_2+n_1=b_1-b_2+n_2$. First integral is take care by
\lmref{L:integration formula for Tate C x R}, while the second integral is the Tate 
integral, whose properties are well-known. 

\subsubsection{G.C.D. of poles}\label{SS:G.C.D. of poles}
We show that there is a meromorphic function $L_{\rm RS}\left(s,{\rm As}\,\pi\right)$ such that if $W\in\cW(\pi,\psi_\xi)_0$ 
and $\Phi\in\cS(\R^2,\psi)$, then $Z(s,W,\Phi)$ is of the form $P(s)\,L_{\rm RS}\left(s,{\rm As}\,\pi\right)$ for some 
$P(s)\in\C[s]$. Furthermore, we show that there
exist $W_j\in\cW(\pi,\psi_\xi)_0$ and $\Phi_j\in\cS(\R^2,\psi)$ such that
\begin{equation}\label{E:sum of zeta integrals equal to L-factor}
\sum_j\,Z(s,W_j,\Phi_j)=L_{{\rm RS}}\left(s,{\rm As}\,\pi\right).
\end{equation}
Notice that these two conditions characterized $L_{{\rm RS}}\left(s,{\rm As}\,\pi\right)$ up to 
non-zero constants.

Recall that $n_0=|n_1-n_2|$. First we have.

\begin{lm}\label{L:zeta integral vanishes for certain test functions}
Let $\ul{c}=(c_1,c_2)$ be a pair of non-negative integers. Suppose $c_1+c_2\not\equiv n_0\pmod 2$,
then the zeta integrals $Z\left(s,W,\Phi_{\ul{c}}\right)$ vanish for all $W\in\cW(\pi,\psi_\xi)_0$.
\end{lm}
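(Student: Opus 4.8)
The plan is to extract the vanishing from the action of the central element $-I=k(\pi)$ of $\GL_2(\R)$, which lies simultaneously in the center of $\GL_2(\C)$ and in ${\rm SO}_2(\R)$.

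The first step is to record a symmetry of the zeta integral. Substituting $g\mapsto g(-I)=-g$ in \eqref{zeta integral} — a change of variables preserving the quotient measure on $\U(\R)\backslash\GL_2(\R)$ since $-I$ is central and the groups involved are unimodular — and using $|\det(-I)|=1$, one obtains
\[
Z(s,W,\Phi)=Z\bigl(s,\rho(-I)W,\rho(-I)\Phi\bigr),
\]
first for ${\rm Re}(s)$ large (where the integral converges absolutely) and then for all $s$ by meromorphic continuation. Here $\rho$ is right translation on $\cW(\pi,\psi_\xi)$, and on $\cS(\R^2)$ we use the action $(\rho(k)\Phi)(v)=\Phi(vk)$.

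The second step is to evaluate the two translations. Since $-I$ is central in $\GL_2(\C)$, right translation by $-I$ acts on every element of $\cW(\pi,\psi_\xi)$ by the scalar $\omega(-1)=\mu(-1)\nu(-1)$, which by \eqref{E:equation of mu and nu} equals $(-1)^{n_1+n_2}$. On the other hand, \eqref{E:translation properties of test fucntions} with $\theta=\pi$ gives $\rho(-I)\Phi_{\ul{c}}=e^{i(c_1-c_2)\pi}\Phi_{\ul{c}}=(-1)^{c_1+c_2}\Phi_{\ul{c}}$. Substituting these into the identity above and using bilinearity of $Z$ in $(W,\Phi)$ yields
\[
Z(s,W,\Phi_{\ul{c}})=(-1)^{n_1+n_2+c_1+c_2}\,Z(s,W,\Phi_{\ul{c}})
\]
for every $W\in\cW(\pi,\psi_\xi)$. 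Since $n_0=|n_1-n_2|\equiv n_1+n_2\pmod 2$, the hypothesis $c_1+c_2\not\equiv n_0\pmod 2$ forces the sign to be $-1$, so $Z(s,W,\Phi_{\ul{c}})=0$; in particular this holds for all $W\in\cW(\pi,\psi_\xi)_0$.

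I do not expect a serious obstacle: the only point needing a word of care is the legitimacy of the change of variables $g\mapsto g(-I)$, which is immediate from the remarks on Haar measures in \subsecref{SS:Haar measures}, while the convergence and continuation statements that make the identity meaningful have already been established earlier in this section. Alternatively the lemma can be deduced from the ${\rm SO}_2(\R)$-isotypic decompositions of $\cW(\pi,\psi_\xi)_0$ — via \lmref{L:integration formula for Tate C x R}(1), which pins down the ${\rm SO}_2(\R)$-types that occur — and of $\cS(\R^2,\psi)$ recorded in \eqref{E:decomposition of S(R^2,psi) and S(R^2,psi)}; but the central-element shortcut above is the most economical.
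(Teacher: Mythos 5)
Your proof is correct, but it takes a genuinely different (and more economical) route than the paper. The paper's own argument invokes the ${\rm SU}(2)$-type decomposition $\cW(\pi,\psi_\xi)_0=\bigoplus_{n\geq n_0,\,n\equiv n_0\,(2)}\cW(\pi,\psi_\xi;\rho_n)$ from \cite[Theorem 6.2]{JL70}: it projects $W$ onto the $e^{i(c_2-c_1)\theta}$-eigenspace of ${\rm SO}_2(\R)$ inside the Whittaker space and concludes that this projection vanishes because its ${\rm SO}_2(\R)$-weight has the wrong parity. You replace this by the single observation that $-I=k(\pi)$ is central in ${\rm GL}_2(\C)$, so $\rho(-I)W=\omega(-1)W=(-1)^{n_1+n_2}W$ for every $W$ in the Whittaker model (by \eqref{E:equation of mu and nu} and the central character of the irreducible $\pi$), while \eqref{E:translation properties of test fucntions} gives $\rho(-I)\Phi_{\ul{c}}=(-1)^{c_1+c_2}\Phi_{\ul{c}}$; the substitution $g\mapsto g(-I)$ is licit because the quotient measure on $\U(\R)\backslash\GL_2(\R)$ is right-invariant, and the resulting identity $Z(s,W,\Phi_{\ul{c}})=(-1)^{n_1+n_2+c_1+c_2}Z(s,W,\Phi_{\ul{c}})$ kills the integral since $n_0=|n_1-n_2|\equiv n_1+n_2\pmod 2$. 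Both proofs ultimately rest on the same parity fact (the value of the central character at $-1$), but yours avoids the $K$-type classification altogether, applies verbatim to all $W\in\cW(\pi,\psi_\xi)$ rather than only ${\rm SO}_2(\R)$-finite vectors, and needs only absolute convergence for ${\rm Re}(s)$ large plus meromorphic continuation; the paper's route yields the finer information about exactly which ${\rm SO}_2(\R)$-weights occur in $\cW(\pi,\psi_\xi)_0$, which it reuses later when constructing the vector-valued Whittaker functions $\vec{W}_\pi^{(n)}$.
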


\begin{proof}
Let $n$ be a non-negative integer and $\rho_n$ denote the $n$-th symmetric power of the standard
two-dimensional representation of ${\rm GL}_2(\C)$ on $\C^2$. By \cite[Theorem 6.2]{JL70}, we have
\[
\cW(\pi,\psi_\xi)_0
=
\bigoplus_{n\geq n_0,\,n\equiv n_0\pmod 2}
\cW(\pi,\psi_\xi;\rho_n)
\]
as ${\rm SU}(2)$-modules. Here $\cW(\pi,\psi_\xi;\rho_n)$ is the $\rho_n$-isotypic component.
From this we find that if $W'$ is a non-zero element in $\cW(\pi,\psi_\xi)$ and $\ell$ is an 
integer such that $\rho(k(\theta))W'=e^{i\ell\theta}W'$, then $\ell\equiv n_0\pmod 2$. 

Now let $W\in\cW(\pi,\psi_\xi)_0$. For ${\rm Re}(s)\gg 0$, we have
\[
Z(s,W,\Phi_{\ul{c}})
=
\int_{\R^{\x}}W'\left(\pMX y001\right)|y|^{s-1}
\int_{\R^{\x}_+}\Phi_{\ul{c}}((0,t))\omega(t)t^{2s}d^{\x}t\,d^{\x}y,
\] 
by  equation \eqref{E:expansion of zeta integral}. Here 
\[
W'(g)
:=\frac{1}{2\pi}
\int_0^{2\pi}
W(gk(\theta))e^{i(c_1-c_2)\theta}d\theta.
\]
By right ${\rm SO}_2(\R)$-finiteness of $W$, we have $W'\in\cW(\pi,\psi_\xi)_0$. Moreover, it follows
immediately from the definition that $\rho(k(\theta))W'=e^{i(c_2-c_1)\theta}W'$ for every 
$k(\theta)$. This implies $W'=0$ if $c_1+c_2\not\equiv n_0\pmod 2$ as desired.
\end{proof}

By \lmref{L:integration formula for Tate C x R} and 
\lmref{L:zeta integral vanishes for certain test functions}, we see that $Z(s,W,\Phi)$ is 
a linear combination of the integrals given by the equation \eqref{E:basis of zeta integrals}
with
\begin{align}\label{E:non-zero conditions for zeta integrals}
\begin{split}
a_1-a_2+n_1&=b_1-b_2+n_2,\\
a_1+a_2&\equiv n_1\pmod 2,\\
c_1+c_2&\equiv n_0\pmod 2.
\end{split}
\end{align}
Notice that the first two imply $b_1+b_2\equiv n_2\pmod 2$. Applying equation
\eqref{E:integration formula for Tate C x R}, it turns out that $Z(s,W,\Phi)$ is a linear 
combination of the functions
\begin{equation}\label{E:basis for zeta integrals:explicit form}
2^{-s}\pi^{-2s}
\Gamma\left(\frac{1}{2}(s+2\lambda_1+a_1+a_2)\right)
\Gamma\left(\frac{1}{2}(s+2\lambda_1+b_1+b_2)\right)
\Gamma\left(s+2\lambda_1+\lambda_2+\frac{c_1+c_2}{2}\right),
\end{equation}
where $a_1, a_2$, $b_1, b_2$ and $c_1, c_2$ are non-negative integers which satisfy the 
conditions \eqref{E:non-zero conditions for zeta integrals}.

Let $\epsilon_j\in\stt{0,1}$ so that $n_j\equiv \epsilon_j\pmod 2$ for $j=0,1,2$. Define a
meromorphic function
\[
E_\pi(s)
=
\zeta_\R(s+2\lambda_1+\epsilon_1)
\zeta_\R(s+2\lambda_2+\epsilon_2)
\zeta_\C(s+\lambda_1+\lambda_2+\epsilon_0/2).
\]
Let $I(\pi)$ denote the subspace of the space of meromorphic functions spanned by the following 
set
\[
\stt{\left . \frac{Z(s,W,\Phi)}{E_\pi(s)}\mbox{ }\right\vert\mbox{ }  W\in\cW(\pi,\psi_\xi)_0,\,\Phi\in\cS(\R^2,\psi)}.
\]
By equation \eqref{E:basis for zeta integrals:explicit form}, one sees that $I(\pi)$ is a subspace 
of $\C[s]$. In fact, $I(\pi)$ is an $ideal$ of $\C[s]$. To see this, let
$\Phi'(x,y)=\frac{d}{dt}\Phi\left((x,y)e^{tI_2}\right)|_{t=0}$, where $I_2$ is the identity 
element in $\mathfrak{g}$. We have the following relation
\[
2(s+\lambda_1+\lambda_2)Z(s,W,\Phi)
+Z(s,W,\Phi')=0.
\]
Being an ideal of $\C[s]$, there exists $P_\pi(s)\in I(\pi)$ such that $I(\pi)=P_\pi(s)\C[s]$.
\lmref{L:main lemma} implies $P_\pi(s)\neq 0$. Put
\[
L_{{\rm RS}}\left(s,{\rm As}\,\pi\right)
=
E_\pi(s)P_\pi(s).
\]

Similarly, we let 
\[
E_{\pi^{\vee}}(s)
=
\zeta_\R(s-2\lambda_1+\epsilon_1)
\zeta_\R(s-2\lambda_2+\epsilon_2)
\zeta_\C(s-\lambda_1-\lambda_2+\epsilon_0/2).
\]
and $I\left(\pi^{\vee}\right)=P_{\pi^{\vee}}(s)\C[s]$ be the ideal of $\C[s]$ spanned by the set
\[
\stt{\left . \frac{Z(s,W\ot\omega^{-1},\Phi)}{E_{\pi^{\vee}}(s)}
\mbox{ }\right\vert\mbox{ } W\in\cW(\pi,\psi_\xi)_0,\,\Phi\in\cS(\R^2,\psi)}.
\]
We put
\[
L_{{\rm RS}}\left(s,{\rm As}\,\pi^{\vee}\right)
=
E_{\pi^{\vee}}(s)P_{\pi^\vee}(s).
\]

Notice that $L_{{\rm RS}}\left(s,{\rm As}\,\pi\right)$ satisfies the two conditions stated at the 
beginning of \subsecref{SS:G.C.D. of poles}. We will show that, up to a non-zero constant,
$L_{{\rm RS}}\left(s,{\rm As}\,\pi\right)$ is equal to $L_{\rm Gal}\left(s,{\rm As}\,\pi\right)$
in \subsecref{SS:matching L-factors}.

\subsubsection{The functional equation}\label{SS:the functional equation}
Functional equation between zeta integrals is indeed a consequence of a work of Loke 
\cite[Theorem 1.3]{Lok01}. 

Let $s\in\C$, $\Phi\in\cS(\R^2,\psi)$ and $g\in\GL_2(\R)$. Define two integrals
\begin{align}\label{E:Godement section}
\begin{split}
f^{(s)}_\Phi(g)
&=
|{\rm det}(g)|^s\int_{\R^{\x}}\Phi((0,t)g)\omega_0(t)|t|^{2s}d^{\x}t,\\
\t{f}^{(s)}_\Phi(g)
&=
\omega_0({\det}(g))^{-1}|{\rm det}(g)|^s\int_{\R^{\x}}\Phi((0,t)g)\omega^{-1}_0(t)|t|^{2s}d^{\x}t.
\end{split}
\end{align}
Here $\omega_0:=\omega|_{\R^{\x}}$. These two integrals converge absolutely for
${\rm Re}(s)\gg 0$ and have meromorphic continuations to whole complex plane.  Moreover, we have
\[
f^{(s)}_\Phi\in\cB\left(|\mbox{ }|^{s-1/2},\omega_0^{-1}|\mbox{ }|^{1/2-s}\right)_0
\quad\text{and}\quad
\t{f}^{(s)}_\Phi\in\cB\left(\omega_0^{-1}|\mbox{ }|^{s-1/2},|\mbox{ }|^{1/2-s}\right)_0,
\]
and the linear maps $\Phi\mapsto f^{(s)}_\Phi$ and $\Phi\mapsto \t{f}^{(s)}_\Phi$ are surjective
except for countable many $s$.

Let 
\[
Mf^{(s)}_\Phi(g)
:=
\int_\R
f^{(s)}_\Phi\left(
\begin{pmatrix}
0&1\\-1&0
\end{pmatrix}
\pMX 1x01 g
\right)dx
\quad\text{and}\quad
M^*:=
\omega_0(-1)\gamma\left(2s-1,\omega_0,\psi\right)M.
\]
The integral converges absolutely for ${\rm Re}(s)\gg 0$ and admits a 
meromorphic continuation to whole complex plane. It defines an intertwining operator
\[
M^*:\cB\left(|\mbox{ }|^{s-1/2},\omega_0^{-1}|\mbox{ }|^{1/2-s}\right)_0\longto
\cB\left(\omega_0^{-1}|\mbox{ }|^{1/2-s},|\mbox{ }|^{s-1/2}\right)_0,
\] 
between two Harish-Chandra modules and one check that
\begin{equation}\label{E:intertwining operator for induced representations}
M^*f^{(s)}_\Phi=\t{f}^{(1-s)}_{\wh{\Phi}}.
\end{equation}
These facts can be founded in \cite[section 4]{GJ79}.

By equations  
\eqref{E:expansion of zeta integral},
\eqref{E:Godement section} and 
\eqref{E:intertwining operator for induced representations}, we can write
\begin{align}\label{E:relating zeta integrals and invariant forms}
\begin{split}
Z(s,W,\Phi)
&=
\int_{\R^{\x}\U(\R)\backslash\GL_2(\R)}
W(g)f^{(s)}_\Phi(g)dg,\\
Z\left(1-s,W\ot\omega^{-1},\wh{\Phi}\right)
&=
\int_{\R^{\x}\U(\R)\backslash\GL_2(\R)}
W(g)M^*f^{(s)}_\Phi(g)dg.
\end{split}
\end{align}
Both integrals appeared in the RHS of \eqref{E:relating zeta integrals and invariant forms} 
define $\left(\mathfrak{g},{\rm O}(2)\right)$-invariant forms on the space
\[
\cW(\pi,\psi_\xi)_0\ot\cB\left(|\mbox{ }|^{s-1/2},\omega_0^{-1}|\mbox{ }|^{1/2-s}\right)_0.
\]
By a result of \cite[Theorem 1.3]{Lok01}, there is a meromorphic
function $\gamma_{{\rm RS}}\left(s,{\rm As}\,\pi,\psi,\xi\right)$, independent of $W$ and $\Phi$ 
such that
\[
Z\left(1-s,W\ot\omega^{-1},\wh{\Phi}\right)
=
\gamma_{{\rm RS}}\left(s,{\rm As}\,\pi,\psi,\xi\right)
Z(s,W,\Phi).
\]

Put
\[
\varepsilon_{{\rm RS}}\left(s,{\rm As}\,\pi,\psi,\xi\right)
=
\gamma_{{\rm RS}}\left(s,{\rm As}\,\pi,\psi,\xi\right)
\frac{L_{{\rm RS}}\left(s,{\rm As}\,\pi\right)}{L_{{\rm RS}}\left(1-s,{\rm As}\,\pi^{\vee}\right)}.
\]
Then we obtain the functional equation
\begin{equation}\label{E:functional equation}
\frac{Z\left(1-s,W\ot\omega^{-1},\wh{\Phi}\right)}
{L_{\rm RS}\left(1-s,{\rm As}\,\pi^{\vee}\right)}
=
\varepsilon_{{\rm RS}}\left(s,{\rm As}\,\pi,\psi,\xi\right)
\frac{Z\left(s,W,\Phi\right)}
{L_{\rm RS}\left(s,{\rm As}\,\pi\right)},
\end{equation}
for $W\in\cW(\pi,\psi_\xi)_0$ and $\Phi\in\cS(\R^2,\psi)$ as desired.

We show that $\varepsilon_{{\rm RS}}\left(s,{\rm As}\,\pi,\psi,\xi\right)\in\C^{\x}$. In fact,
by equations \eqref{E:sum of zeta integrals equal to L-factor} and 
\eqref{E:functional equation}, we see that 
\[
\varepsilon_{{\rm RS}}\left(s,{\rm As}\,\pi,\psi,\xi\right)\in\C[s].
\]
On the other hand, if we apply the functional equation twice and using again equation
\eqref{E:sum of zeta integrals equal to L-factor}, we find that
\[
\varepsilon_{{\rm RS}}\left(s,{\rm As}\,\pi,\psi,\xi\right)
\varepsilon_{{\rm RS}}\left(1-s,{\rm As}\,\pi^{\vee},\psi,\xi\right)
=
1.
\] 
This shows our claim.

\subsubsection{Matching $L$-factors}\label{SS:matching L-factors}
We show that, up to non-zero constants, $L_{{\rm RS}}\left(s,{\rm As}\,\pi\right)$ is equal to
$L_{\rm Gal}\left(s,{\rm As}\,\pi\right)$.
Recall that $\epsilon_j\in\stt{0,1}$ so that $n_j\equiv\epsilon_j\pmod 2$ for $j=0,1,2$, 
and we have
\[
L_{{\rm RS}}\left(s,{\rm As}\,\pi\right)
=
P_\pi(s)E_\pi(s)
\quad\text{and}\quad
L_{{\rm RS}}\left(s,{\rm As}\,\pi^{\vee}\right)
=
P_{\pi^{\vee}}(s)E_{\pi^{\vee}}(s),
\]
with $P_\pi\in\C[s]$ (resp. $P_{\pi^{\vee}}\in\C[s]$) is a generator of the ideal $I(\pi)$
(resp. $I(\pi^{\vee})$) defined in \subsecref{SS:G.C.D. of poles}. Here
\begin{align*}
E_\pi(s)
&=
\zeta_\R(s+2\lambda_1+\epsilon_1)
\zeta_\R(s+2\lambda_2+\epsilon_2)
\zeta_\C(s+\lambda_1+\lambda_2+\epsilon_0/2),\\
E_{\pi^{\vee}}(s)
&=
\zeta_\R(s-2\lambda_1+\epsilon_1)
\zeta_\R(s-2\lambda_2+\epsilon_2)
\zeta_\C(s-\lambda_1-\lambda_2+\epsilon_0/2).
\end{align*}
On the other hand,
\begin{align*}
L_{\rm Gal}\left(s,{\rm As}\,\pi\right)
&=
\zeta_\R(s+2\lambda_1+\epsilon_1)
\zeta_\R(s+2\lambda_2+\epsilon_2)
\zeta_\C(s+\lambda_1+\lambda_2+n_0/2),\\
L_{\rm Gal}\left(s,{\rm As}\,\pi^{\vee}\right)
&=
\zeta_\R(s-2\lambda_1+\epsilon_1)
\zeta_\R(s-2\lambda_2+\epsilon_2)
\zeta_\C(s-\lambda_1-\lambda_2+n_0/2).
\end{align*}

Let $n_0=\epsilon_0+2m$ for some non-negative integer $m$. By \lmref{L:main lemma} and the 
functional equation \eqref{E:functional equation}, we obtain the following equality
\begin{equation}\label{E:matching L-factors}
c^{\vee}\,
\frac{\prod_{j=0}^{m-1}\left(1-s-\lambda_1-\lambda_2+\epsilon_0/2+j\right)}
{P_{\pi^{\vee}}(1-s)}
=
c\,\varepsilon_{{\rm RS}}\left(s,{\rm As}\,\pi,\psi,\xi\right)
\frac{\prod_{j=0}^{m-1}\left(s+\lambda_1+\lambda_2+\epsilon_0/2+j\right)}
{P_{\pi}(s)}.
\end{equation}
Here we understand that $\prod_{j=0}^{m-1}=1$ if $m=0$. Since 
$\varepsilon_{{\rm RS}}\left(s,{\rm As}\,\pi,\psi,\xi\right)$ is a non-zero constant, we see that
both sides of the equation \eqref{E:matching L-factors} are elements of $\C[s]$ and they have no
common roots. This implies 
\begin{align*}
P_{\pi}(s)
=
\alpha\prod_{j=0}^{m-1}\left(s+\lambda_1+\lambda_2+\epsilon_0/2+j\right)
\quad\text{and}\quad
P_{\pi^{\vee}}(1-s)
=
\alpha^{\vee}\prod_{j=0}^{m-1}\left(1-s-\lambda_1-\lambda_2+\epsilon_0/2+j\right),
\end{align*}
for some non-zero constants $\alpha$ and $\alpha^{\vee}$. This proves the assertion.

\subsection{Proof of \lmref{L:main lemma} and the relation between epsilon factors}
\label{S:proof of main lemma and the relation between epsilon factors}
The purpose of this section is to prove \lmref{L:main lemma}. As a consequence, we obtain the 
relation \eqref{E:relation between epsilon factors}. To do this, we need some preparations.
We adopt the notations defined in \subsecref{SS:notation} as well as the Haar measures given 
in \subsecref{SS:Haar measures}. 
As in the proof of  \lmref{L:zeta integral vanishes for certain test functions}, for a 
non-negative integer $n$, let $\rho_n$ denote the $n$-th symmetric power of the standard 
two-dimensional representation of ${\rm GL}_2(\C)$ on $\C^2$. We realize $\rho_n$ as 
$\left(\rho,\cL_n(\C)\right)$ with
\begin{equation}\label{E:realization of symmetric power}
\cL_n(\C)=\bigoplus_{j=0}^{n}\C X^{j}Y^{n-j}
\quad\text{and}\quad
\rho(g)P(X,Y)=P((X,Y)g),
\end{equation}
for $P(X,Y)\in\cL_n(\C)$ and $g\in{\rm GL}_2(\C)$.
Let $\langle\cdot,\cdot\rangle_n$ denote the non-degenerated bilinear pairing on $\cL_n(\C)$ 
defined by
\begin{equation}\label{E:bilinear invariant pairing for symmetric power}
\langle X^{j}Y^{n-j}, X^{\ell}Y^{n-\ell}\rangle_n
=
\begin{cases}
(-1)^{j}\begin{pmatrix}n\\j\end{pmatrix}^{-1}\quad&\text{if $j+\ell=n$},\\
0\quad&\text{if $j+\ell\neq n$}.
\end{cases}
\end{equation} 
One check that 
\[
\langle \rho(g)P(X,Y), \rho(g)Q(X,Y)\rangle_n
=
{\rm det}^n(g)
\langle P(X,Y), Q(X,Y)\rangle_n.
\]
In particular, $\langle\cdot,\cdot\rangle_n$ defines an ${\rm SU}_2(\R)$-invariant bilinear pairing 
on $\cL_n(\C)$.

\subsubsection{Whittaker function of type $\rho_n$}
\label{SS:Whittaker function of type rho_n}
Let $\mu, \nu$ be as in the equation \eqref{E:equation of mu and nu}. Recall that $n_0=|n_1-n_2|$.
Let $n\geq n_0$ be an integer which has the same parity with $n_0$. By \cite[Lemma 6.1]{JL70},
we have
\[
{\rm dim}_\C{\rm Hom}_{{\rm SU}_2(\R)}\left(\rho_n,\pi\right)=1.
\]
As a consequence, there is a unique (up to constants) non-zero element 
$\vec{W}_\pi^{(n)}\in\cW(\pi,\psi_\xi)_0\ot\cL_n(\C)$ characterized by the following two conditions:
\begin{enumerate}
\item $\vec{W}_\pi^{(n)}(gu)=\rho_n(u)^{-1}\vec{W}_\pi^{(n)}(g)$ for every $g\in{\rm GL}_2(\C)$ and $u\in {\rm SU}_2(\R)$.
\item For every $v\in \cL_n(\C)$, the function $W_v(g):=\langle \vec{W}_\pi^{(n)}(g),v\rangle_n$ belongs
to $\cW(\pi,\psi_\xi;\rho_n)$, the isotypic component of $\rho_n$ in $\cW(\pi,\psi_\xi)_0$.
\end{enumerate}
Following \cite[section 18]{Jac72}, we refer to $\vec{W}_\pi^{(n)}$ as the Whittaker function of type 
$\rho_n$ attached to $\pi$. When $n=n_0$, we simply write $\vec{W}_\pi$ for $\vec{W}_\pi^{(n_0)}$.

\subsubsection{Construction of $\vec{W}_\pi^{(n)}$}
\label{SS:construction of W}
By the uniqueness of the space $\cW(\pi,\psi_\xi)_0$, we may assume $n_1\geq n_2$, so that
\[
n_0=n_1-n_2\geq 0.
\]
Let $m\in\Z$ such that $n=n_0+2m$. We review the construction of $\vec{W}_\pi^{(n)}$ given in 
\cite[section 18]{Jac72}.

Let $\Psi\in\cS(\C^2,\psi_\xi)$ and define the partial Fourier transform $\Psi^{\sim}$ of $\Psi$
by
\[
\Psi(z,w)^{\sim}
=
\int_{\C}\Psi(z,u)\psi_\xi(wu)du.
\]
Here $du$ is twice of the usual Lebesgue measure on $\C$.
One can define the partial Fourier transform of elements in $S(\C^2,\psi_\xi)\ot\cL_n(\C)$ in 
an obvious way. 

Let $\vec{\Psi}_\pi^{(n)}\in\cS(\C^2,\psi_\xi)\ot\cL_n(\C)$ be the element such that
\begin{equation}\label{E:test function for vector value Whittaker function}
\vec{\Psi}_\pi^{(n)}(z,w)^{\sim}
=
\left(wX-zY\right)^m\left(\b{z}X+\b{w}Y\right)^{n_0+m}e^{-2\pi(z\b{z}+w\b{w})}.
\end{equation}
Then 
\begin{equation}\label{E:vector value Whittaker function}
\vec{W}_\pi^{(n)}(g)
=
\mu({\rm det}(g))|{\rm det}(g)|_\C^{\frac{1}{2}}
\int_{\C^{\x}}
\omega_{\psi_\xi}(g)\vec{\Psi}_\pi^{(n)}\left(z,z^{-1}\right)\mu\nu^{-1}(z)d^{\x}z,\quad g\in{\rm GL}_2(\C).
\end{equation}
Here we use the same notation $\omega_{\psi_\xi}$ to indicate the representation 
$\omega_{\psi_\xi}\ot 1$ of ${\rm GL}_2(\C)$ on the space $\cS(\C^2)\ot\cL_n(\C)$. The Haar measure 
$d^{\x}z=|z|_\C^{-1}dz$, where $dz$ is twice of the usual Lebesgue measure on $\C$.

When $n=n_0$, we have
\begin{align*}
\vec{\Psi}_\pi^{(n_0)}(z,w)
&=
\sum_{\ell=0}^{n_0}
\begin{pmatrix}
n_0\\ \ell
\end{pmatrix}
\b{z}^\ell w^{n_0-\ell}e^{-2\pi(z\b{z}+w\b{w})}X^\ell Y^{n_0-\ell}\\
&=
\sum_{\ell=0}^{n_0}
\begin{pmatrix}
n_0\\ \ell
\end{pmatrix}
\Psi_{(0,\ell)}\ot\Psi_{(n_0-\ell,0)}(z,w) X^\ell Y^{n_0-\ell},
\end{align*}
in our previous notation (Cf. \subsecref{SS:notation}). As a consequence, we find that
(Cf. equation \eqref{E:basis for Whittaker functions})
\begin{equation}\label{E:Whittaker function of minimal type}
\vec{W}_\pi
=
\sum_{\ell=0}^{n_0}
\begin{pmatrix}
n_0\\ \ell
\end{pmatrix}
W_{((0,\ell),(n_0-\ell,0))}\,X^\ell Y^{n_0-\ell}.
\end{equation}

We also need the case when $n_0=0$ and $n=2$. In this case, we have
\begin{align*}
\vec{\Psi}_\pi^{(2)}(z,w)
&=
-\b{z}\b{w}e^{-2\pi(z\b{z}+w\b{w})}X^2+\left(\frac{1}{2\pi}-z\b{z}+w\b{w}\right)
e^{-2\pi(z\b{z}+w\b{w})}XY\\
&-zwe^{-2\pi(z\b{z}+w\b{w})}Y^2.
\end{align*}
Therefore we find that
\begin{align}\label{E:Whittaker function of type 2}
\begin{split}
\vec{W}_\pi^{(2)}
=
-W_{((0,1),(0,1))}\,X^2
+\left(
\frac{1}{2\pi}W_{((0,0),(0,0))}
-
W_{((1,1),(0,0))}
+
W_{((0,0),(1,1))}
\right)XY
-
W_{((1,0),(1,0))}\,Y^2.
\end{split}
\end{align}
\subsubsection{List of the results}\label{SS:list of the results}
We list the choices of $W$ and $\Phi$ in \lmref{L:main lemma} and the resulting epsilon factors in
the following. There are five cases which depend on the parities of $n_0$ and $n_1$. 
\begin{itemize}
\item[Case 1.]
$n_0\geq 0$ is even and $n_1$ is even.\\
Choose 
\[
W
=
\langle \vec{W}_\pi,\left(X+iY\right)^{\frac{n_0}{2}}\left(X-iY\right)^{\frac{n_0}{2}}\rangle_{n_0}
\quad\text{and}\quad
\Phi(x,y)
=
e^{-\pi(x^2+y^2)}.
\]
Then we have
\[
Z(s,W,\Phi)
=
\frac{\pi}{2}\,L_{\rm Gal}\left(s,{\rm As}\,\pi\right)
\quad\text{and}\quad
Z\left(1-s,W\ot\omega^{-1},\wh{\Phi}\right)
=
\frac{\pi}{2}\,L_{\rm Gal}\left(1-s,{\rm As}\,\pi^{\vee}\right).
\]
As a consequence,
\[
\varepsilon_{{\rm RS}}\left(s,{\rm As}\,\pi,\psi,\xi\right)=1.
\]
\item[Case 2.]
$n_0\geq 2$ is even and $n_1$ is odd.\\
Choose 
\[
W
=
\langle \vec{W}_\pi,\left(X+iY\right)^{\frac{n_0}{2}+1}\left(X-iY\right)^{\frac{n_0}{2}-1}\rangle_{n_0}
\quad\text{and}\quad
\Phi(x,y)
=
(x-iy)^2e^{-\pi(x^2+y^2)}.
\]
Then we have
\[
Z(s,W,\Phi)
=
\sqrt{-1}\,\pi\,L_{\rm Gal}\left(s,{\rm As}\,\pi\right)
\quad\text{and}\quad
Z\left(1-s,W\ot\omega^{-1},\wh{\Phi}\right)
=
\sqrt{-1}\,\pi\,L_{\rm Gal}\left(1-s,{\rm As}\,\pi^{\vee}\right).
\]
As a consequence,
\[
\varepsilon_{{\rm RS}}\left(s,{\rm As}\,\pi,\psi,\xi\right)=1.
\]
\item[Case 3.]
$n_0$ is odd and $n_1$ is even.\\
Choose 
\[
W
=
\langle \vec{W}_\pi,\left(X+iY\right)^{\frac{n_0+1}{2}}\left(X-iY\right)^{\frac{n_0-1}{2}}\rangle_{n_0}
\quad\text{and}\quad
\Phi(x,y)
=
(x-iy)e^{-\pi(x^2+y^2)}.
\]
Then we have
\[
Z(s,W,\Phi)
=
\frac{\pi}{2\sqrt{-1}}\,L_{\rm Gal}\left(s,{\rm As}\,\pi\right)
\quad\text{and}\quad
Z\left(1-s,W\ot\omega^{-1},\wh{\Phi}\right)
=
\frac{\pi}{2}\,L_{\rm Gal}\left(1-s,{\rm As}\,\pi^{\vee}\right).
\]
As a consequence,
\[
\varepsilon_{{\rm RS}}\left(s,{\rm As}\,\pi,\psi,\xi\right)=\sqrt{-1}.
\]
\item[Case 4.]
$n_0$ is odd and $n_1$ is odd.\\
Choose 
\[
W
=
\langle \vec{W}_\pi,
\left(X+iY\right)^{\frac{n_0+1}{2}}\left(X-iY\right)^{\frac{n_0-1}{2}}
\rangle_{n_0}
\quad\text{and}\quad
\Phi(x,y)
=
(x-iy)e^{-\pi(x^2+y^2)}.
\]
Then we have
\[
Z(s,W,\Phi)
=
-\frac{\pi}{2}\,L_{\rm Gal}\left(s,{\rm As}\,\pi\right)
\quad\text{and}\quad
Z\left(1-s,W\ot\omega^{-1},\wh{\Phi}\right)
=
\frac{\sqrt{-1}\,\pi}{2}\,L_{\rm Gal}\left(1-s,{\rm As}\,\pi^{\vee}\right).
\]
As a consequence,
\[
\varepsilon_{{\rm RS}}\left(s,{\rm As}\,\pi,\psi,\xi\right)=-\sqrt{-1}.
\]
\item[Case 5.]
$n_0=0$ and $n_1$ is odd.\\
Choose 
\[
W
=
\langle \vec{W}_\pi^{(2)},\left(X+iY\right)\left(X-iY\right)\rangle_{2}
\quad\text{and}\quad
\Phi(x,y)
=
e^{-\pi(x^2+y^2)}.
\]
Then we have
\[
Z(s,W,\Phi)
=
-\frac{\pi}{2}\,L_{\rm Gal}\left(s,{\rm As}\,\pi\right)
\quad\text{and}\quad
Z\left(1-s,W\ot\omega^{-1},\wh{\Phi}\right)
=
-\frac{\pi}{2}\,L_{\rm Gal}\left(1-s,{\rm As}\,\pi^{\vee}\right).
\]
As a consequence,
\[
\varepsilon_{{\rm RS}}\left(s,{\rm As}\,\pi,\psi,\xi\right)=1.
\]
\end{itemize}

\begin{rmk}
In the last case, one might wonder why we do not choice $W=\vec{W}_\pi$. The reason is 
$Z(s,\vec{W}_\pi,\Phi)=0$ for all $\Phi\in\cS(\R^2,\psi)$. 
\end{rmk}

With the recipes above, the relation \eqref{E:relation between epsilon factors} follows 
immediately. We remain that
\[
\varepsilon_{\rm Gal}\left(s,{\rm As}\,\pi,\psi\right)
=
\left(\sqrt{-1}\right)^{1+\epsilon_1+\epsilon_2+n_0}.
\]
Here $\epsilon_j\in\stt{0,1}$ such that $n_j\equiv\epsilon_j\pmod 2$ for $j=1,2$.

\subsubsection{Explicit computations}
In the following, we prove the assertions listed in \subsecref{SS:list of the results}. 
We only check case 3 and case 5 as the other cases are similar. We need a simple result.
\begin{lm}\label{L:combinatorial identity}
Let $N$ be a non-negative integer and $z,w\in\C$. We have
\[
\sum_{\ell=0}^N
\begin{pmatrix}
N\\\ell
\end{pmatrix}
\Gamma\left(z+\ell\right)\Gamma\left(w-\ell\right)
=
\frac{\Gamma(z)\Gamma(w-N)\Gamma(z+w)}{\Gamma(z+w-N)}.
\]
\end{lm}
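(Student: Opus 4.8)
The plan is to prove Lemma~\ref{L:combinatorial identity} by converting both sides into Beta integrals. Since the left-hand side is a finite sum of products of Gamma functions and the right-hand side is a quotient of Gamma functions, both are meromorphic in $(z,w)\in\C^2$; hence it suffices to establish the identity on the non-empty open set $\{{\rm Re}(z)>0,\ {\rm Re}(w)>N\}$ and then conclude by analytic continuation. So I would first fix $z,w$ with ${\rm Re}(z)>0$ and ${\rm Re}(w)>N$, which guarantees ${\rm Re}(w-\ell)>0$ for all $0\le\ell\le N$.

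Under this hypothesis I would apply the classical Beta integral $\Gamma(a)\Gamma(b)=\Gamma(a+b)\int_0^1 t^{a-1}(1-t)^{b-1}\,dt$ with $a=z+\ell$, $b=w-\ell$, so that $\Gamma(z+\ell)\Gamma(w-\ell)=\Gamma(z+w)\int_0^1 t^{z+\ell-1}(1-t)^{w-\ell-1}\,dt$. Substituting and interchanging the (finite) sum with the integral gives
\[
\sum_{\ell=0}^N\binom{N}{\ell}\Gamma(z+\ell)\Gamma(w-\ell)
=\Gamma(z+w)\int_0^1 t^{z-1}(1-t)^{w-1}\sum_{\ell=0}^N\binom{N}{\ell}\left(\frac{t}{1-t}\right)^\ell dt.
\]
By the binomial theorem the inner sum is $\left(1+\tfrac{t}{1-t}\right)^N=(1-t)^{-N}$, so the right-hand side collapses to $\Gamma(z+w)\int_0^1 t^{z-1}(1-t)^{w-N-1}\,dt$, which equals $\Gamma(z+w)\,\Gamma(z)\Gamma(w-N)/\Gamma(z+w-N)$ by the Beta integral again (legitimate since ${\rm Re}(z)>0$ and ${\rm Re}(w-N)>0$). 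This is precisely the asserted formula, and analytic continuation then removes the restriction on $z,w$.

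There is essentially no serious obstacle: the only point meriting a word of justification is the interchange of summation and integration, which is automatic because the sum is finite. As an alternative (should one prefer a purely algebraic proof), the same identity can be read off from the Chu--Vandermonde summation for ${}_2F_1(-N,z;1-w;1)$ after writing $\binom{N}{\ell}$, $\Gamma(z+\ell)$, $\Gamma(w-\ell)$ in terms of Pochhammer symbols; I would, however, present the Beta-integral argument as the primary proof since it is the shortest and most self-contained.
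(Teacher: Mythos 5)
Your argument is correct, but it takes a different route from the paper: the paper simply proves the identity by induction on $N$ (using $\Gamma(x+1)=x\Gamma(x)$ together with Pascal's rule, the inductive step is a one-line computation valid identically as an identity of meromorphic functions), whereas you convert each term into a Beta integral, sum via the binomial theorem, and then remove the temporary hypotheses ${\rm Re}(z)>0$, ${\rm Re}(w)>N$ by analytic continuation. Your computation checks out: with $a=z+\ell$, $b=w-\ell$ the total parameter $a+b=z+w$ is independent of $\ell$, the geometric factor $\bigl(t/(1-t)\bigr)^{\ell}$ resums to $(1-t)^{-N}$, and the resulting integral is $B(z,w-N)$, giving exactly the right-hand side; the interchange of the finite sum with the integral is indeed harmless, and the continuation step is legitimate since both sides are meromorphic in $(z,w)$ and agree on a non-empty open set. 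The trade-off is that the paper's induction is shorter and entirely algebraic, requiring no convergence hypotheses or continuation argument, while your Beta-integral proof (or equivalently the Chu--Vandermonde evaluation of ${}_2F_1(-N,z;1-w;1)$ that you mention) is more conceptual and identifies the identity as a classical hypergeometric summation rather than an ad hoc fact.
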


\begin{proof}
Induction on $N$.
\end{proof}

First note that in any case, we have
\[
W(gk(\theta))\Phi((x,y)k(\theta))
=
W(g)\Phi(x,y),
\quad
g\in{\rm GL}_2(\C),\,k(\theta)\in {\rm SO}_2(\R)\,\,\text{and}\,\,x,y\in\R. 
\]
Similarly the function
$\left(W\ot\omega^{-1}\right)(g)\wh{\Phi}((x,y)g)$ is also right ${\rm SO}_2(\R)$-invariant.
As a consequence, we obtain 
\begin{align}\label{E:simplify expansion of ezta integrals}
\begin{split}
Z(s,W,\Phi)
&=
\left(
\int_{\R^{\x}}
W\left(\pMX y001\right)|y|^{s-1}d^{\x}y
\right)
\left(
\int_{\R^{\x}_+}
\Phi((0,t)\omega(t)t^{2s}d^{\x}t
\right),\\
Z\left(1-s,W\ot\omega^{-1},\wh{\Phi}\right)
&=
\left(
\int_{\R^{\x}}
W\left(\pMX y001\right)\omega^{-1}(y)|y|^{-s}d^{\x}y
\right)
\left(
\int_{\R^{\x}_+}
\wh{\Phi}(0,t)\omega^{-1}(t)t^{2-2s}d^{\x}t
\right).
\end{split}
\end{align}

In our notation 
\begin{align}
\begin{split}
\zeta(s,W,1)
&=
\int_{\R^{\x}}
W\left(\pMX y001\right)|y|^{s-1}d^{\x}y,\\
\zeta(1-s,W,\omega_0^{-1})
&=
\int_{\R^{\x}}
W\left(\pMX y001\right)\omega^{-1}(y)|y|^{-s}d^{\x}y.
\end{split}
\end{align} 
Here $1$ means the identity character of $\R^{\x}$ and $\omega_0:=\omega|_{\R^{\x}}$

Assume $n_0$ is odd and $n_1$ is even. By equations 
\eqref{E:integration formula for Tate C x R},
\eqref{E:bilinear invariant pairing for symmetric power}, 
\eqref{E:Whittaker function of minimal type}
and \lmref{L:combinatorial identity}, we find that
\begin{align*}
\zeta(s,W,1)
&=
\sum_{\ell=0}^{\frac{n_0-1}{2}}
\begin{pmatrix}
\frac{n_0-1}{2}\\ \ell
\end{pmatrix}
\zeta\left(s,W_{((0,2\ell),(n_0-2\ell,0))},1\right)\\
&=
(2\pi)^{-(s+\lambda_1+\lambda_2+n_0/2-1)}
\sum_{\ell=0}^{\frac{n_0}{2}-1}
\begin{pmatrix}
\frac{n_0-1}{2}\\ \ell
\end{pmatrix}
\Gamma\left(\frac{s}{2}+\lambda_1+\ell\right)
\Gamma\left(\frac{s}{2}+\lambda_2+\frac{n_0}{2}-\ell\right)\\
&=
(2\pi)^{-(s+\lambda_1+\lambda_2+n_0/2-1)}
\frac{\Gamma\left(\frac{s}{2}+\lambda_1\right)\Gamma\left(\frac{s+1}{2}+\lambda_2\right)
\Gamma\left(s+\lambda_1+\lambda_2+\frac{n_0}{2}\right)}
{\Gamma\left(s+\lambda_1+\lambda_2+\frac{1}{2}\right)}.
\end{align*}
It then follows from the equation \eqref{E:simplify expansion of ezta integrals} that
\begin{align*}
Z(s,W,\Phi)
&=
(2\pi)^{-(s+\lambda_1+\lambda_2+n_0/2-1)}
\frac{\Gamma\left(\frac{s}{2}+\lambda_1\right)\Gamma\left(\frac{s+1}{2}+\lambda_2\right)
\Gamma\left(s+\lambda_1+\lambda_2+\frac{n_0}{2}\right)}
{\Gamma\left(s+\lambda_1+\lambda_2+\frac{1}{2}\right)}\\
&\x
\left(-2^{-1}\sqrt{-1}\right)\pi^{-(s+\lambda_1+\lambda_2+1/2)}
\Gamma\left(s+\lambda_1+\lambda_2+\frac{1}{2}\right)\\
&=
\frac{\pi}{2\sqrt{-1}}\,L_{\rm Gal}\left(s,{\rm As}\,\pi\right).
\end{align*}
On the other hand, by using equations 
\eqref{E:integration formula for Tate C x R},
\eqref{E:bilinear invariant pairing for symmetric power}, 
\eqref{E:Whittaker function of minimal type}
and \lmref{L:combinatorial identity} again, we obtain
\begin{align*}
\zeta\left(1-s,W,\omega_0^{-1}\right)
&=
\left(-\sqrt{-1}\right)
\sum_{\ell=0}^{\frac{n_0-1}{2}}
\begin{pmatrix}
\frac{n_0-1}{2}\\ \ell
\end{pmatrix}
\zeta\left(1-s,W_{((0,2\ell+1),(n_0-2\ell-1,0))},\omega_0^{-1}\right)\\
&=
\left(-\sqrt{-1}\right)(2\pi)^{-(n_0/2-s-\lambda_1-\lambda_2)}
\sum_{\ell=0}^{\frac{n_0-1}{2}}
\begin{pmatrix}
\frac{n_0-1}{2}\\ \ell
\end{pmatrix}
\Gamma\left(1-\frac{s}{2}-\lambda_2+\ell\right)
\Gamma\left(\frac{n_0-s}{2}-\lambda_1-\ell\right)\\
&=
\left(-\sqrt{-1}\right)(2\pi)^{-(n_0/2-s-\lambda_1-\lambda_2)}
\frac
{\Gamma\left(\frac{1-s}{2}-\lambda_1\right)
\Gamma\left(1-\frac{s}{2}-\lambda_2\right)
\Gamma\left(1-s-\lambda_1-\lambda_2+\frac{n_0}{2}\right)}
{\Gamma\left(\frac{3}{2}-s-\lambda_1-\lambda_2\right)}.
\end{align*}
Since $\wh{\Phi}=-\Phi$, by using the equation 
\eqref{E:simplify expansion of ezta integrals} again, we find that 
\begin{align*}
Z\left(1-s,W\ot\omega^{-1},\wh{\Phi}\right)
&=
\left(-\sqrt{-1}\right)(2\pi)^{-(n_0/2-s-\lambda_1-\lambda_2)}
\frac
{\Gamma\left(\frac{1-s}{2}-\lambda_1\right)
\Gamma\left(1-\frac{s}{2}-\lambda_2\right)
\Gamma\left(1-s-\lambda_1-\lambda_2+\frac{n_0}{2}\right)}
{\Gamma\left(\frac{3}{2}-s-\lambda_1-\lambda_2\right)}\\
&\x
\left(2^{-1}\sqrt{-1}\right)\pi^{-(3/2-s-\lambda_1-\lambda_2)}
\Gamma\left(\frac{3}{2}-s-\lambda_1-\lambda_2\right)\\
&=
\frac{\pi}{2}\,L_{\rm Gal}\left(1-s,{\rm As}\,\pi^{\vee}\right).
\end{align*}
These show the assertions for case 3.

We now consider case 5. Hence we assume $n_0=0$ and $n_1$ is odd.
By equations 
\eqref{E:integration formula for Tate C x R},
\eqref{E:bilinear invariant pairing for symmetric power} and
\eqref{E:Whittaker function of type 2}, we have 
\begin{align*}
\zeta(s,W,1)
&=
-\zeta\left(s,W_{((0,1),(0,1))},1\right)
-\zeta\left(s,W_{((1,0),(1,0))},1\right)\\
&=
-2(2\pi)^{-(s+\lambda_1+\lambda_2)}
\Gamma\left(\frac{s+1}{2}+\lambda_1\right)
\Gamma\left(\frac{s+1}{2}+\lambda_2\right).
\end{align*}
Therefore by equation \eqref{E:simplify expansion of ezta integrals}, we find that
\begin{align*}
Z(s,W,\Phi)
=
-\frac{\pi}{2}\,L_{\rm Gal}\left(s,{\rm As}\,\pi\right).
\end{align*}
Similarly, we have
\begin{align*}
\zeta\left(1-s,W,\omega_0^{-1}\right)
&=
-\zeta\left(s,W_{((0,1),(0,1))},\omega_0^{-1}\right)
-\zeta\left(s,W_{((1,0),(1,0))},\omega_0^{-1}\right)\\
&=
-2(2\pi)^{-(1-s-\lambda_1-\lambda_2)}
\Gamma\left(1-\frac{s}{2}-\lambda_1\right)
\Gamma\left(1-\frac{s}{2}-\lambda_2\right),
\end{align*}
and hence
\[
Z\left(1-s,W\ot\omega^{-1},\wh{\Phi}\right)
=
-\frac{\pi}{2}\,L_{\rm Gal}\left(1-s,{\rm As}\,\pi^{\vee}\right).
\]
This proves \lmref{L:main lemma} and hence completes the proof of \thmref{T:main theorem}.

\section{Twisted Asai local factors}\label{S:4}

In this section, we consider the twisted Asai local factors. The main result is Theorem \ref{T:twisted Asai factor} whose proof uses the functoriality of global Asai transfer proved in \cite{Kri03} and Theorem \ref{T:twisted Asai gamma factor}. When $E=F\times F$, Theorem \ref{T:twisted Asai gamma factor} is a theorem in \cite{Ikeda1989}. As mentioned by Ikeda in the proof of \cite[Lemma 2.2]{Ikeda1992}, the result should be true when $E$ is a field. Following the idea of Ikeda and combine our results in the case $n=1$, we give a detailed proof of it.

\subsection{Notation}\label{S:4.1}
For $n \in \Z_{> 0}$, let $\GSp_n$ be the linear algebraic group over $\Q$ defined by
$$\GSp_n =\left \{ g \in \GL_{2n}\mbox{ } \left \vert \mbox{ } g\bp {\bf 0}_n & {\bf 1}_n \\ -{\bf 1}_n & {\bf 0}_n \ep {}^tg = \nu(g) \bp {\bf 0}_n & {\bf 1}_n \\ -{\bf 1}_n & {\bf 0}_n \ep ,\mbox{ }\nu(g) \in \mathbb{G}_m    \right .\right \}.$$
The map $\nu : \GSp_n \rightarrow \mathbb{G}_m$ is called the scale map. Let $\P_n$ and $\B_n$ be parabolic subgroups of $\GSp_n$ defined by
\begin{align*}
\P_n & = \left \{ \left .  \bp A & * \\ {\bf 0}_n & \nu{}^tA^{-1}\ep \in \GSp_n \mbox{ } \right\vert \mbox{ }A \in \GL_n, \nu \in {\mathbb G}_m\right \},\\
\B_n &=\left \{ \left .  \bp A & * \\ {\bf 0}_n & \nu{}^tA^{-1}\ep \in \GSp_n \mbox{ } \right\vert \mbox{ }A\mbox{ is upper triangular},\mbox{ } \nu \in {\mathbb G}_m \right \}.
\end{align*}
Let $U_n$ and $N_n$ be the unipotent subgroups of $\P_n$ and $\B_n$, respectively. Then
\begin{align*}
U_n &= \left \{ \left . \bp {\bf 1}_n & X \\ {\bf 0}_n & {\bf 1}_n\ep \in \P_n   \mbox{ } \right \vert \mbox{ } X ={}^tX  \right \},\\
N_n&= \left \{ \left . \bp A & * \\ {\bf 0}_n & \nu{}^tA^{-1}\ep \in \B_n \mbox{ } \right \vert \mbox{ } A\mbox{ is unipotent upper triangular}, \mbox{ }\nu \in {\mathbb G}_m \right \}.
\end{align*}
Let $M_n$ and $T_n$ be Levi subgroups of $\P_n$ and $\B_n$ given by
\begin{align*}
M_n & = \left \{ \left . \bp A & {\bf 0}_n \\ {\bf 0}_n & \nu{}^tA^{-1}\ep \in \P_n  \mbox{ } \right \vert \mbox{ } A \in \GL_n, \mbox{ }\nu \in {\mathbb G}_m   \right \},\\
T_n & = \left \{ \left . \bp A & {\bf 0}_n \\ {\bf 0}_n & \nu{}^tA^{-1}\ep \in \B_n \mbox{ } \right \vert \mbox{ } A \mbox{ is diagonal, }\nu \in {\mathbb G}_m \right \} .
\end{align*}
Let 
\begin{align*}
K_n= \left \{ \begin{array}{ll} \GSp_n(F)\cap\GL_{2n}(\mathcal{O}_F)& \mbox{ if $F$ is nonarchimedean},\\
\GSp_n(\R) \cap O_{2n}(\R) & \mbox{ if $F=\R$}
\end{array} \right .
\end{align*}
 be a maximal compact subgroup of $\GSp_n(F)$. Let $K_{n}^{1} = \Sp_n(F) \cap K_n$. For $A \in \GL_n$, $\nu \in {\mathbb G}_m$, and $X \in {\rm M}_n$ with $X={}^tX$, let
\begin{align*}
m(A,\nu) &=  \bp A & {\bf 0}_n \\ {\bf 0}_n & \nu{}^tA^{-1}\ep ,& u(X) &= \bp {\bf 1}_n & X \\ {\bf 0}_n & {\bf 1}_n \ep ,& u_-(X) &= \bp {\bf 1}_n & {\bf 0}_n \\ X & {\bf 1}_n \ep
\end{align*}

\subsection{Intertwining operators}\label{S:4.2}
Let $N(T_3)$ be the normalizer of $T_3$ in $\GSp_3$, and $W = N(T_3) / T_3$ be the corresponding Weyl group. By abuse of notation, we write $w$ for the coset $w T_3 \in W$. Let $J_3 \in \GL_3$ be the anti-diagonal matrix with non-zero entries all equal to $1$, and $w_3 \in W$ be a Weyl element given by 
\begin{align*}
w_3 = \bp {\bf 0}_3 & -J_3 \\ J_3 & {\bf 0}_3 \ep. 
\end{align*}

For $i=1,2,3$, let $x_i$ be the character of $T_3$ defined by
$$x_i \left ( m\left( \bp t_1 & 0 & 0\\0 & t_2 & 0 \\ 0 & 0 & t_3\ep ,\nu\right)\right )=t_i.$$
The set of simple roots of $T_3$ with respect to $B_3$ is given by $\Delta_3 =\{\alpha_1,\alpha_2,\alpha_3 \}$ with
\begin{align*}
&\alpha_1 = x_1-x_2,\quad \alpha_2 = x_2-x_3, \quad \alpha_3=2x_3.
\end{align*}
For $i=1,2,3$, let $\kappa_{(i)}$ be the embedding of $\GL_2$ into $\GSp_3$ defined by
\begin{align*}
\kappa_{(1)} : \GL_2 &\longrightarrow \GSp_3,\quad g  \longmapsto \bp g & & &  \\ & 1 & &\\ & & {}^tg^{-1}& \\  & & &1   \ep,\\
\kappa_{(2)} : \GL_2 &\longrightarrow \GSp_3,\quad g  \longmapsto \bp 1 & & &  \\ & g & &\\ & & 1& \\  & & &{}^tg^{-1}   \ep,\\
\kappa_{(1)} : \GL_2 &\longrightarrow \GSp_3,\quad \bp a & b \\ c & d \ep \longmapsto \bp ad-bc & 0 & 0 & 0& 0&0  \\ 0 & ad-bc & 0 & 0 & 0 & 0 \\ 0  & 0& a& 0&0&b\\ 0&0&0&1&0&0\\ 0&0&0&0&1&0\\ 0&0&c&0&0&d  \ep .
\end{align*}
Denote $w_{(i)} = \kappa_{(i)}\left ( \bp 0 & -1 \\ 1 & 0 \ep \right) \in N(T_3)$. Note that the image of $\kappa_{(i)}$ is equal to the centralizer of the connected component of ${\rm Ker}\,\alpha_i$ in $\GSp_3$.

For $\chi$ be a character of $T_3$. Define ${\rm Ind}_{B_3(F)}^{\GSp_3(F)}(\chi)$ to be the space consisting of functions $f : \GSp_3(F) \rightarrow \C$ which satisfies the following conditions:
\begin{itemize}
\item $f$ is right $K_3$-finite.
\item For $g \in \GSp_3(F)$ and $b\in B_3(F)$, we have
$$f(bg) = \chi(b)\delta_{B_3}(b)^{1/2}f(g).$$
Here $\delta_{B_3}$ is the modulus character of $B_3$.
\end{itemize}

For $w \in W$ and $\chi$ a character of $T_3$, we define the intertwining operator
\begin{align*}
&I_{w} : {\rm Ind}_{B_3(F)}^{\GSp_3(F)}(\chi)\longrightarrow {\rm Ind}_{B_3(F)}^{\GSp_3(F)}(\chi^{w}),\\
&I_{w}f(g) = \int_{N_3(F) \cap w N_3^{-}(F)w^{-1}}f(w^{-1}nh)dn.
\end{align*}
Here $N_3^- $ is the unipotent radical of the opposite Borel subgroup of $B_3.$ The Haar measure $dn$ is defined to be the product measure of the self-dual Haar measure with respect to $\psi$ of each coordinate. The integral is absolutely convergent if $\chi$ belongs to some open subset and can be meromorphically continued to all $\chi$.

Note that for $x,y,z,w,u,t \in F$, we have 
\begin{align*}
&\kappa_{(3)}\left(\bp 1 & 0 \\ x & 1 \ep \right)w_{(3)}^{-1}\kappa_{(2)}\left(\bp 1 & 0 \\ y & 1 \ep \right)w_{(2)}^{-1}\kappa_{(1)}\left(\bp 1 & 0 \\ z & 1 \ep \right)w_{(1)}^{-1}\\
&\times\kappa_{(3)}\left(\bp 1 & 0 \\ w & 1 \ep \right)w_{(3)}^{-1}\kappa_{(2)}\left(\bp 1 & 0 \\ u & 1 \ep \right)w_{(2)}^{-1}\kappa_{(3)}\left(\bp 1 & 0 \\ t & 1 \ep \right)w_{(3)}^{-1}\\ &= u_-\left(\bp x & y & z \\y & w &u \\ z & u & t\ep\right) m \left ( \bp 1&0&0\\0&-1&0\\0&0&1\ep ,1\right)w_3^{-1}.
\end{align*}
Therefore, for $\chi$ a character of $T_3$ and $f \in {\rm Ind}_{B_3(F)}^{\GSp_3(F)}(\chi)$, we have
\begin{align}\label{E:composite of intertwining operators}
I_{w_3}f = \chi\left(m \left ( \bp 1&0&0\\0&-1&0\\0&0&1\ep ,1\right)\right)I_{w_{(3)}}I_{w_{(2)}}I_{w_{(3)}}I_{w_{(1)}}I_{w_{(2)}}I_{w_{(3)}}f.
\end{align}

Let $\chi$ be a character of $T_3$, $\mu$ be a character of $F^{\times}$, and $f \in {\rm Ind}_{B_3(F)}^{\GSp_3(F)}(\chi)$. Define functions $W_{w_{(1)}}f$, $W_{w_{(3)}}f$, and $Z_{(1)}(s,\mu,W_{w_{(1)}}f,- )$ on $\GSp_3(F)$ by
\begin{align*}
W_{w_{(1)}}f(g) &= \int_{F}f\left(w_{(1)}^{-1}\kappa_{(1)}\left(\bp  1 & x \\ 0 & 1 \ep \right)g\right)\psi(-x)dx,\\
W_{w_{(3)}}f(g) &= \int_{F}f\left(w_{(3)}^{-1}\kappa_{(3)}\left(\bp  1 & x \\ 0 & 1 \ep \right)g\right)\psi(-x)dx,\\
Z_{(1)}(s,\mu,W_{w_{(1)}}f,g) &= \int_{F^{\times}}W_{w_{(1)}}f\left(\kappa_{(1)}\left(\bp a&0\\0&1 \ep\right) g\right)\mu(a)|a|_F^{s-1/2}d^{\times}a.
\end{align*}
The integrals defining $W_{w_{(1)}}f$ and $W_{w_{(3)}}f$ are absolutely converge for $\chi$ in some open sets and can be meromorphically continued to all $\chi$. The integral defining 
$Z_{(1)}(s,\mu,W_{w_{(1)}}f,- )$ are absolutely converge for $\mu$, and $s$ in some open subsets and can be meromprhically continued to all $\mu$ and $s$. 

Let $\omega$ be a character of $F^{\times}$. Define $I(\omega,s)$ be the space consisting of functions $f : \GSp_3(F) \longrightarrow \C$ which satisfies the following conditions:
\begin{itemize}
\item $f$ is right $K_3$-finite.
\item For $g \in \GSp_3(F)$, $A \in \GL_3(F)$, and $\nu \in F^{\times}$, we have
$$f \left ( m(A,\nu)g\right )  = \omega^{-2}(\nu)|\nu|_F^{-3s-3/2}\omega(\det(A))|\det(A)|_F^{2s+1}f(g).$$
\end{itemize}
Note that $I(\omega,s)\subseteq {\rm Ind}_{B_3(F)}^{\GSp_3(F)}(\chi_{\omega,s})$, where $\chi_{\omega, s}$ is a character of $T_3$ defined by
\begin{align*}
\chi_{\omega, s}\left ( m\left( \bp t_1 & 0 & 0\\0 & t_2 & 0 \\ 0 & 0 & t_3\ep ,\nu\right)\right ) = \omega^{-2}(\nu)|\nu|_F^{-3s+3/2}\omega(t_1)|t_1|_F^{2s-2}\omega (t_2)|t_2|_F^{2s-1}\omega(t_3)|t_3|_F^{2s}.
\end{align*}
For $f \in I(\omega,s)$, we have $\omega(\nu(g))I_{w_3}f(g) \in I(\omega^{-1},1-s)$. Let $I_{w_3}^* : I(\omega, s) \longrightarrow I(\omega^{-1},1-s)\otimes (\omega^{-1}\circ \nu)$ be the normalized intertwining operator defined by
$$I_{w_3}^*= \gamma(2s-2,\omega,\psi)\gamma(4s-3,\omega^2,\psi)I_{w_3}.$$
Note that $I_{w_3}^*$ is well-defined except for countably many values of $s$.

We say that a function $(s,g)\mapsto f^{(s)}(g)$ on $\GSp_3(F)\times\C$ is a holomorphic section of $I(\omega,s)$ if $f^{(s)}$ satisfies the following conditions:
\begin{itemize}
\item $f^{(s)}(g)$ is right $K_3$-finite.
\item For each $s \in \C$, $f^{(s)}(g)$ belongs to $I(\omega,s)$ as a function of $g \in \GSp_3(F)$.
\item For each $g \in \GSp_3(F)$, $f^{(s)}(g)$ is holomorphic in $s$.
\end{itemize}
A function $(s,g)\mapsto f^{(s)}(g)$ on $\GSp_3(F)\times\C$ is called a meromorphic section of $I(\omega, s)$ if there exist a non-zero entire function $\beta$ such that $\beta(s)f^{(s)}(g)$ is a holomorphic section of $I(\omega, s)$. We can define holomorphic sections and meromorphic sections of ${\rm Ind}_{B_3(F)}^{\GSp_3(F)}(\chi_{\omega,s})$ in a similar way. 

A meromorphic section $f^{(s)}(g)$ of $I(\omega, s)$ is called a good section if it satisfies the following condition:
\begin{align*}
a_w(2s-1)^{-1}I_{w}f^{(s)}\mbox{ is holomorphic for all }w\in\Omega_3.
\end{align*}
Here $\Omega_3 \subset N(T_3)$ is certain subset defined in \cite[page 191]{Ikeda1992} and $a_w(s)$ is a meromorphic functions defined in \cite[page 195]{Ikeda1992} for each $w\in\Omega_3$. By \cite[Lemma 1.2]{Ikeda1992}, $f^{(s)}(g)$ is a good section of $I(\omega,s)$ if and only if $\omega(\nu(g))I_{w_3}^*f^{(s)}(g)$ is a good section of $I(\omega^{-1},1-s)$. In particular, if $f^{(s)}$ is a good section of $I(\omega,s)$, then
\begin{align}\label{properties of good sections}
\begin{split}
&L(2s+1,\omega)^{-1}L(4s,\omega^2)^{-1}f^{(s)}(g)\mbox{ is a holomorphic section of $I(\omega, s)$,}\\
&\omega(\nu(g))L(1-2s,\omega^{-1})^{-1}L(4-4s,\omega^{-2})^{-1}I_{w_3}^*f^{(s)}(g)\mbox{ is a holomorphic section of $I(\omega^{-1},1-s).$}
\end{split}
\end{align}
Also, a holomorphic section $f^{(s)}$ of $I(\omega,s)$ is a good section (Cf. \cite[Lemma 1.3]{Ikeda1992}).

\subsection{Twisted Asai local factors vie local zeta integrals}\label{S:4.3}
Define algebraic groups over $F$ 
\begin{align*}
\G &= \{(g^{(1)},g^{(2)})\in ({\rm R}_{E /F}\GL_{2/E} )\times \GL_{2/F} \mbox{ }\vert \mbox{ }\det(g^{(1)})=\det(g^{(2)}) \},\\
{\bf U}_0 &= \{ (u(x),u(y))\in ({\rm R}_{E /F}\GL_{2/E} )\times \GL_{2/F}  \mbox{ } \vert \mbox{ }{\rm tr}_{E/F}(x)+y=0 \},\\
{\bf U}_0^{'} & = \{ (u(x),u(y))\in {\bf U}_0  \mbox{ } \vert \mbox{ }y=0 \},\\
\GL_2^{\circ} & = \{ g \in {\rm R}_{E/F}\GL_{2/ E} \mbox{ }\vert \mbox{ }\det(g) \in {\mathbb G}_{m/F}\},\\
U^{\circ} & = \{ u(x) \in {\rm R}_{E/F}\GL_{2/ E} \mbox{ }\vert \mbox{ } {\rm tr}_{E/F}(x)=0\}.
\end{align*}
Let $$K^{\circ}= \left \{ \begin{array}{ll} \GL_2^{\circ}(F)\cap \GL_2(\mathcal{O}_E) & \mbox{ if $F$ is nonarchimedean},\\
\GL_2^{\circ}(\R)\cap {\rm U}_2(\R) & \mbox{ if $F=\R$} \end{array} \right .$$ be an open compact subgroup of $\GL_2^{\circ}(F)$.

Fix $\xi \in E^{\times}$ such that ${\rm tr}_{E/F}(\xi)=0$. Define embeddings
\begin{align*}
\iota_{2,\xi} : \GL_2^{\circ}(F) &\longrightarrow \GSp_2(F)\\
\begin{pmatrix}a & b \\ c & d \end{pmatrix} &\longmapsto \left (\begin{array}{cccccc} a_1 & a_2 & 2b_1 & 2\xi^2b_2  \\
\xi^2a_2 & a_1  & 2\xi^2b_2 &2\xi^2b_1  \\
c_1/2 & c_2/2& d_1 & \xi^2d_2\\
c_2/2& \xi^{-2}c_1/2 & d_2 & d_1   \end{array} \right ),\\
\iota_{\xi} : \G(F) &\longrightarrow \GSp_3(F)\\
\left (\begin{pmatrix}a & b \\ c & d \end{pmatrix},\begin{pmatrix}a' & b' \\ c' & d' \end{pmatrix} \right ) &\longmapsto \left (\begin{array}{cccccc} a_1 & a_2 & 0 & 2b_1 & 2\xi^2b_2 & 0 \\
\xi^2a_2 & a_1 & 0 & 2\xi^2b_2 &2\xi^2b_1 & 0 \\
0 & 0 &a'&0&0&b'\\
c_1/2 & c_2/2& 0& d_1 & \xi^2d_2&0\\
c_2/2& \xi^{-2}c_1/2& 0 & d_2 & d_1 & 0\\
0&0&c'&0&0&d'
   \end{array} \right ),\\
\gamma : \GSp_2(F) &\longrightarrow \GSp_3(F)\\
   \bp A&B \\ C&D \ep  &\longmapsto  \bp A&0&B&0 \\ 0&\nu&0&0\\ C&0&D&0 \\ 0&0&0&1 \ep,
\end{align*}
here $a=a_1+\xi a_2,b=b_1+\xi b_2,c=c_1+\xi c_2,d=d_1+\xi d_2$, and $\nu = \nu \left ( \bp A&B \\ C&D \ep\right )$. Define $\eta, \eta' \in \Sp_3(\Z)$ by
\begin{align*}
 \eta &= \bp 0 & 0 & 0 & -1 & 0 & 0 \\ 0 & 1 & 0 & 0 & 0 &0 \\ 0&0&1&0&0&0 \\ 1&0&1&0&0&0 \\ 0&0&0&0&1&0 \\ 0&0&0&-1&0&1 \ep,& \eta'&=\eta\gamma \left ( \bp 0&0&0&1\\-1&0&0&0\\1&-1&0&0\\0&0&-1&-1 \ep \right ).
\end{align*}
Note that 
\begin{align}\label{E:identity for congugation by eta}
\begin{split}
\eta \iota_{\xi}(g) \eta^{-1} & = \bp d_1 & -c_2/2 & c_1/2 & -c_1/2 & -\xi^2d_2 & 0 \\ -2\xi^2b_2 & a_1 & -\xi^2a_2  & \xi^2 a_2 & 2\xi^2 b_1 &0\\ -b' & 0 & a' & 0 & 0  & b' \\ -2b_1-b' & a_2 & -a_1+a' & a_1 & 2\xi^2 b_2 & b' \\ -d_2 & \xi^{-2}c_1/2 & -c_2/2 & c_2/2 & d_1 & 0 \\  d_1-d' & -c_2/2 & c_1/2+c' & -c_1/2 & -\xi^2 d_2 & d'  \ep.
\end{split}
\end{align}
In particular,for $x,y,z \in F$, $a \in F^{\times}$,
\begin{align}\label{E:identity for conjugation by eta special form}
\begin{split}
\eta \iota_{\xi} \left( \bp 1 & 0 \\ z+\xi x & 1  \ep  ,\bp a & 0 \\ ay & a^{-1} \ep  \right ) \eta^{-1}&= m\left ( \bp 1&-x/2&az/2 \\ 0 & 1 & 0 \\ 0 & 0 & a  \ep , 1\right) u\left( \bp -z/2 & 0 & 0 \\ 0&0&0 \\ 0&0&0\ep  \right )\\
&\times u_{-}\left(\bp   0&0&a-1\\ 0&\xi^{-2}z/2&-ax/2 \\ a-1 & -ax/2 & a^2y+a^2z/2    \ep \right).
\end{split}
\end{align}

Let $\pi_1$ and $\pi_2$ be generic irreducible admissible representations of $\GL_2(E)$ and $\GL_2(F)$ with central characters $\omega_1$ and $\omega_2$, respectively. Put $\omega=\omega_1\vert_{F^{\times}}\cdot\omega_2$. Let $f^{(s)}$ be a good section of $I(\omega,s)$, and $W_1 \in \mathscr{W}(\pi_1,\psi_E)_0$, $W_2 \in \mathscr{W}(\pi_2,\psi)_0$. Define the local zeta integral (Cf. \cite{PSR1987} and \cite{Ikeda1989})
\begin{align*}
\Psi(f^{(s)},W_1,W_2) = \int_{F^{\times}{\bf U}_0(F) \backslash {\bf G}(F)} f^{(s)}(\eta \iota_{\xi}(g)) W_1(g^{(1)})W_2(g^{(2)})dg.
\end{align*}
In \cite{PSR1987} and \cite{Ikeda1989}, it is proved that the local zeta integral is absolutely convergent for ${\rm Re}(s) \gg 0$ and can be meromorphically continued to $s \in \C$. Moreover, there exist a meromorphic function $\gamma(s,{\rm As}\,\pi_1 \otimes \pi_2,\psi,\xi)$ such that the following functional equation holds
\begin{align}\label{E:functional equation for twisted Asai}
\Psi(I_{w_3}^*f^{(s)},W_1,W_2) = \gamma_{\rm PSR}(s,{\rm As}\,\pi_1 \otimes \pi_2,\psi,\xi)\Psi(f^{(s)},W_1,W_2).
\end{align}
Note that for $a \in F^{\times}$, we have
\begin{align}\label{E:dependence on psi 2}
\begin{split}
\gamma_{\rm PSR}(s,{\rm As}\,\pi_1 \otimes \pi_2,\psi^{a},\xi)&=\omega(a)^4|a|_{F}^{8s-4}\gamma_{\rm PSR}(s,{\rm As}\,\pi_1 \otimes \pi_2,\psi,\xi),\\
\gamma_{\rm PSR}(s,{\rm As}\,\pi_1 \otimes \pi_2,\psi,a\xi)&= \omega(a)^{-2}|a|_F^{-4s+2}\gamma_{\rm PSR}(s,{\rm As}\,\pi_1 \otimes \pi_2,\psi,\xi).
\end{split}
\end{align}

Assume $F$ is non-archimedean. Let $q$ be the cardinality of the residue field of $F$. By \cite[Appendix 3 to \S3]{PSR1987}, the ideal generated over $\C[q^s,q^{-s}]$ by $\Psi(f^{(s)},W_1,W_2)$ for good sections $f^{(s)}$ and $W_1 \in \mathscr{W}(\pi_1,\psi_E)_0$, $W_2 \in \mathscr{W}(\pi_2,\psi)_0$ is a fractional ideal of $\C[q^s,q^{-s}]$ containing $1$. Therefore, there is a unique generator of the form $P(q^{-s})$ with $P(X) \in \C[X]$ and $P(0)=1$. Put 
\begin{align*}
L_{\rm PSR}(s,{\rm As}\,\pi_1\otimes\pi_2)&=P(q^{-s})^{-1},\\
\varepsilon_{\rm PSR}(s,{\rm As}\,\pi_1\otimes\pi_2,\psi,\xi)&=\gamma_{\rm PSR}(s,{\rm As}\,\pi_1\otimes\pi_2,\psi,\xi)L_{\rm PSR}(s,{\rm As}\,\pi_1\otimes\pi_2)L_{\rm PSR}(1-s,{\rm As}\,\pi_1^{\vee}\otimes\pi_2^{\vee})^{-1}.
\end{align*}
By (\ref{E:functional equation for twisted Asai}), $\varepsilon_{\rm PSR}(s,{\rm As}\,\pi_1\otimes\pi_2,\psi,\xi) \in \C[q^{s},q^{-s}]^{\times}$.

Assume $F$ is archimedean. One can deduece from the proofs of \cite[Proposition 5.1]{Ikeda1989} and the following proposition that there exist a meromorphic function $\alpha(s)$ without zeros such that $\alpha(s)^{-1}Z(f^{(s)},W_1,W_2)$ is holomorphic for any good section $f^{(s)}$ and $W_1 \in \mathscr{W}(\pi_1,\psi_E)_0$, $W_2 \in \mathscr{W}(\pi_2,\psi)_0$. As explained in \cite[page 228]{Ikeda1992}, it follows that, up to holomorphic functions without zeros, there exist a unique meromorphic function $L_{\rm PRS}(s,{\rm As}\,\pi_1\otimes\pi_2)$ without zeros satisfying the following conditions:
\begin{itemize}
\item $L_{\rm PRS}(s,{\rm As}\,\pi_1\otimes\pi_2)^{-1}Z(f^{(s)},W_1,W_2)$ is holomorphic for any good section $f^{(s)}$ and $W_1 \in \mathscr{W}(\pi_1,\psi_E)_0$, $W_2 \in \mathscr{W}(\pi_2,\psi)_0$.
\item For each $s_0\in\C$, there exist a good section $f^{(s)}$ and $W_1 \in \mathscr{W}(\pi_1,\psi_E)_0$, $W_2 \in \mathscr{W}(\pi_2,\psi)_0$ such that $L_{\rm PRS}(s,{\rm As}\,\pi_1\otimes\pi_2)^{-1}Z(f^{(s)},W_1,W_2)$ is non-zero at $s=s_0.$
\end{itemize}
Put 
\begin{align*}
\varepsilon_{\rm PSR}(s,{\rm As}\,\pi_1\otimes\pi_2,\psi,\xi)&=\gamma_{\rm PSR}(s,{\rm As}\,\pi_1\otimes\pi_2,\psi,\xi)L_{\rm PSR}(s,{\rm As}\,\pi_1\otimes\pi_2)L_{\rm PSR}(1-s,{\rm As}\,\pi_1^{\vee}\otimes\pi_2^{\vee})^{-1}.
\end{align*}
It is well-defined up to holomorphic function without zeros. By the properties characterizing the $L$-factors and (\ref{E:functional equation for twisted Asai}), $\varepsilon_{\rm PSR}(s,{\rm As}\,\pi_1\otimes\pi_2,\psi,\xi)$ is a holomorphic function without zeros.

Following the idea in the proof of \cite[Theorem 3]{Ikeda1989}, we prove the following proposition.
\begin{thm}\label{T:twisted Asai gamma factor}
Assume $\pi_2$ is a subquotient of a principal series representation ${\rm Ind}_{B(F)}^{\GL_2(F)}(\mu_2|\mbox{ }|_F^{v_2} , \nu_2|\mbox{ }|_F^{-v_2})$. Then 
\begin{align*}
\gamma_{\rm PSR}(s,{\rm As}\,\pi_1 \otimes \pi_2,\psi,\xi) &= \omega(4\xi^4)^{-1}|4\xi^4|_F^{-2s+1}\gamma_{\rm RS}(s+v_2,{\rm As}\,\pi_1\otimes \mu_2 ,\psi,\xi ) \gamma_{\rm RS}(s-v_2,{\rm As}\,\pi_1\otimes \nu_2 ,\psi,\xi )\\
&=\omega(4\xi^2)^{-1}|4\xi^2|_F^{-2s+1}\omega_{E/F}(-1)\gamma_{\Gal}(s,{\rm As}\,\pi_1 \otimes \pi_2,\psi).
\end{align*}
Here $\omega_{E/F}$ is the quadratic character of $F^{\times}$ associated to $E/F$ by local class field theory.
\end{thm}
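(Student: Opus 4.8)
The proof follows the strategy of Ikeda for the triple product case (\cite[Theorem 3]{Ikeda1989}, \cite[\S2]{Ikeda1992}), adapted to a genuine quadratic field $E$, combined with the $n=1$ theory established in \corref{equality of epsilon factors} and \thmref{T:main theorem}. Since the local zeta integral $\Psi(f^{(s)},W_1,W_2)$ and the functional equation \eqref{E:functional equation for twisted Asai} are defined through the Whittaker model, and since all generic subquotients of a principal series of $\GL_2(F)$ carry the same $\gamma_{\rm PSR}$-factor (by an argument analogous to Remark \ref{epsilon for split}(2)), we may assume that $\pi_2$ is the full principal series ${\rm Ind}_{B(F)}^{\GL_2(F)}(\mu_2|\mbox{ }|_F^{v_2},\nu_2|\mbox{ }|_F^{-v_2})$, which is generic, and realize its Whittaker functions $W_2=W_{\psi,f_2}$ through the Jacquet integral attached to $f_2\in\cB(\mu_2|\mbox{ }|_F^{v_2},\nu_2|\mbox{ }|_F^{-v_2})_0$.

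First I would unfold the $\GSp_3$ zeta integral. Using the conjugation formulae \eqref{E:identity for congugation by eta} and \eqref{E:identity for conjugation by eta special form}, the integral over $F^{\times}{\bf U}_0(F)\backslash{\bf G}(F)$ is rewritten so that the good section $f^{(s)}$ is evaluated on elements of $M_3U_3U_3^{-}$, while the $g^{(2)}$-variable paired with $W_2$ is reduced, upon inserting the Jacquet integral defining $W_2$, to an integral over a maximal torus of $\GL_2(F)$ that splits off the contribution of $\mu_2|\mbox{ }|_F^{v_2}$ (from the top-left entry, whose $|\mbox{ }|_F^{v_2}$ absorbs into the $|\det|_F^s$ to give $|\det|_F^{s+v_2}$) and that of $\nu_2|\mbox{ }|_F^{-v_2}$ (from the bottom-right entry). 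Carrying this out as in Ikeda's computation --- with the quadratic twists coming from the entries $2b_i$, $2\xi^2 b_i$, $c_i/2$, $\xi^{-2}c_i/2$ of $\iota_\xi$ and the self-dual Haar measures --- expresses $\Psi(f^{(s)},W_1,W_2)$ as a product of two Rankin--Selberg Asai zeta integrals of the type \eqref{zeta integral}, namely $Z(s+v_2,W_1\ot\mu_2,\Phi')$ and $Z(s-v_2,W_1\ot\nu_2,\Phi'')$ with $W_1\in\mathscr{W}(\pi_1,\psi_\xi)$ and $\Phi',\Phi''\in\mathfrak{S}(F^2)$ built from $W_1$ and $f^{(s)}$, multiplied by an explicit factor (a monomial in $q^{-s}$ in the $p$-adic case, a nowhere-vanishing holomorphic factor in the archimedean case) involving $\omega$ and $|4\xi^4|_F$.

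Next I would apply the normalized intertwining operator $I_{w_3}^{*}$ and use its factorization \eqref{E:composite of intertwining operators} into the six rank-one operators $I_{w_{(i)}}$: this is exactly the tool that relates the $\GSp_3$ functional equation \eqref{E:functional equation for twisted Asai} to the Rankin--Selberg functional equation \eqref{FEasai}, the $\kappa_{(1)}$- and $\kappa_{(2)}$-copies implementing the two $\GL_1$-functional equations of the Tate-type torus integrals that produced $W_2$ and the $\kappa_{(3)}$-copies implementing the self-dual Fourier transform $\Phi\mapsto\widehat{\Phi}$. After reconciling the normalizing factor $\gamma(2s-2,\omega,\psi)\gamma(4s-3,\omega^2,\psi)$ of $I_{w_3}^{*}$ with the resulting $\GL_1$ gamma factors and with the difference between the normalizations of $\gamma_{\rm PSR}$ and $\gamma_{\rm RS}$, one obtains $\Psi(I_{w_3}^{*}f^{(s)},W_1,W_2)$ as the same explicit factor times $Z(1-s-v_2,W_1\ot\mu_2^{-1}\omega_1^{-1},\widehat{\Phi'})\,Z(1-s+v_2,W_1\ot\nu_2^{-1}\omega_1^{-1},\widehat{\Phi''})$; comparing the two expressions through \eqref{E:functional equation for twisted Asai} and applying \eqref{FEasai} twice yields
\[
\gamma_{\rm PSR}(s,{\rm As}\,\pi_1\ot\pi_2,\psi,\xi)=\omega(4\xi^4)^{-1}|4\xi^4|_F^{-2s+1}\,\gamma_{\rm RS}(s+v_2,{\rm As}\,\pi_1\ot\mu_2,\psi,\xi)\,\gamma_{\rm RS}(s-v_2,{\rm As}\,\pi_1\ot\nu_2,\psi,\xi),
\]
the prefactor being precisely what the bookkeeping of the $2$'s and $\xi^2$'s in $\iota_\xi$ produces, consistently with \eqref{E:dependence on psi} and \eqref{E:dependence on psi 2}. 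This is the first asserted equality.

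For the second equality I would invoke \corref{equality of epsilon factors} together with \thmref{T:main theorem} and \eqref{E:relation between epsilon factors} to write $\gamma_{\rm RS}(s+v_2,{\rm As}\,\pi_1\ot\mu_2,\psi,\xi)=\omega_1(\xi)\mu_2(\xi^2)|\xi^2|_F^{s+v_2-1/2}\lambda_{E/F}(\psi)^{-1}\gamma_{\Gal}(s+v_2,{\rm As}\,\pi_1\ot\mu_2,\psi)$ and likewise with $(\nu_2,s-v_2)$ in place of $(\mu_2,s+v_2)$. Multiplying the two, using $\lambda_{E/F}(\psi)^2=\omega_{E/F}(-1)$, $\omega_1(\xi)^2\mu_2(\xi^2)\nu_2(\xi^2)=\omega(\xi^2)$ (recall $\xi^2\in F^\times$ since ${\rm tr}_{E/F}(\xi)=0$), the identity $|\xi^2|_F^{s+v_2-1/2}|\xi^2|_F^{s-v_2-1/2}=|\xi^2|_F^{2s-1}$, the multiplicativity $\gamma_{\Gal}(s,{\rm As}\,\pi_1\ot\pi_2,\psi)=\gamma_{\Gal}(s+v_2,{\rm As}\,\pi_1\ot\mu_2,\psi)\gamma_{\Gal}(s-v_2,{\rm As}\,\pi_1\ot\nu_2,\psi)$ coming from $\phi_{\pi_2}=\mu_2|\mbox{ }|_F^{v_2}\op\nu_2|\mbox{ }|_F^{-v_2}$, and finally $\omega(4\xi^4)^{-1}\omega(\xi^2)=\omega(4\xi^2)^{-1}$ and $|4\xi^4|_F^{-2s+1}|\xi^2|_F^{2s-1}=|4\xi^2|_F^{-2s+1}$, one arrives at $\gamma_{\rm PSR}(s,{\rm As}\,\pi_1\ot\pi_2,\psi,\xi)=\omega(4\xi^2)^{-1}|4\xi^2|_F^{-2s+1}\omega_{E/F}(-1)\gamma_{\Gal}(s,{\rm As}\,\pi_1\ot\pi_2,\psi)$, as claimed. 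The main obstacle is the unfolding of the $\GSp_3$ zeta integral and the identification of the six rank-one operators $I_{w_{(i)}}$ with $\GL_1$-intertwining operators and the Fourier transform, carried out uniformly in the archimedean and non-archimedean settings (using the analytic facts on good sections, meromorphic continuation, and the existence of $L_{\rm PRS}$ recalled in \S\ref{S:4.3}, together with the lemmas of the Appendix), and --- a genuinely delicate point --- the precise tracking of the powers of $2$ and of $\xi$, since an error in any change of variables would propagate to the final constant $4\xi^4$. Modulo these, the argument introduces no essentially new idea beyond Ikeda's; the novelty is the treatment of a genuine quadratic field $E$ and the link to the $n=1$ Asai theory via Theorem A.
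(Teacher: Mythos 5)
Your overall strategy---Ikeda's method combined with the $n=1$ Asai theory, with the second equality deduced from \corref{equality of epsilon factors} and \thmref{T:main theorem}---is the right one, and your bookkeeping for the second equality (including $\lambda_{E/F}(\psi)^2=\omega_{E/F}(-1)$, $\omega_1(\xi)^2\mu_2(\xi^2)\nu_2(\xi^2)=\omega(\xi^2)$, and the absorption $\omega(4\xi^4)^{-1}\omega(\xi^2)=\omega(4\xi^2)^{-1}$, $|4\xi^4|_F^{-2s+1}|\xi^2|_F^{2s-1}=|4\xi^2|_F^{-2s+1}$) matches the paper. But the central unfolding step as you state it is not correct: $\Psi(f^{(s)},W_1,W_2)$ does not unfold into a product $Z(s+v_2,W_1\ot\mu_2,\Phi')\,Z(s-v_2,W_1\ot\nu_2,\Phi'')$ with $\Phi',\Phi''\in\mathfrak{S}(F^2)$. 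After inserting the Jacquet integral for $W_2$ and collapsing ${\bf U}_0$ to ${\bf U}_0'$, what one actually obtains is a \emph{single} Asai-type zeta integral of $W_1$ against a section $\overline{\mathscr{F}}^{(s,u_2)}_{f,h_2}$ of a principal series of $\GL_2(F)$, manufactured from $f^{(s)}$ and $h_2$ by an inner $\SL_2(F)$-integration and the rank-one intertwining operator $M$, multiplied by one $\GL_1$ gamma factor and the inverse of one Rankin--Selberg Asai gamma factor; this section is not a Godement section attached to a Schwartz function, which is precisely why the paper proves Proposition A of the Appendix (the functional equation for arbitrary holomorphic sections) rather than quoting \eqref{FEasai} alone. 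The constant $\omega(4\xi^4)^{-1}|4\xi^4|_F^{-2s+1}$ and the remaining $\GL_1$ gamma factors come not from the unfolding but from the explicit key identity comparing $\overline{\mathscr{F}}^{(s,u_2)}_{I_{w_3}^*f,h_2}$ with $\overline{\mathscr{F}}^{(s,u_2)}_{f,h_2}$, proved via the factorization \eqref{E:composite of intertwining operators} and Whittaker functionals on $\GSp_3$. Had the product factorization you assert been available, multiplicativity of the $L$-factor itself would follow immediately, which is not what this method yields.

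Second, your argument lacks the analytic justification at the decisive point: the regions of absolute convergence of $\Psi(f^{(s)},W_1,W_2)$ and of $\Psi(I_{w_3}^*f^{(s)},W_1,W_2)$ (roughly ${\rm Re}(s)\gg 0$ and ${\rm Re}(1-s)\gg 0$) are disjoint, and the interchanges of integration needed in the unfolding have yet another region of validity, so ``comparing the two expressions and applying \eqref{FEasai} twice'' at a fixed $s$ is not licensed. The paper handles this by deforming $\pi_1$ and $\pi_2$ in auxiliary parameters $(u_1,u_2)$, establishing the identities on the domains $D$, $D^{\vee}$, $\mathscr{D}$, and then exploiting that $D\cup D^{\vee}$ is a connected tube domain with convex hull $\C^3$ together with the holomorphy and nonvanishing of the relevant $L_{\rm RS}$-factors there, before specializing $u_1=v_1$, $u_2=v_2$. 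Without this multi-variable continuation (or an equivalent device), and without the corrected intermediate structure described above, the proof as proposed does not go through.
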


If $\pi_1$ and $\pi_2$ are both unitary, let $\Lambda({\rm As}\,\pi_1\otimes\pi_2)$ be the non-negative real number defined in \cite[page 228]{Ikeda1992}.
\begin{corollary}\label{C:local factors for twisted Asai}
Assume $\pi_1$ and $\pi_2$ are both unitary and $\Lambda({\rm As}\,\pi_1\otimes\pi_2)<1/2$. If $\pi_2$ is a subquotient of a principal series representation, then
\begin{align*}
L_{\rm PRS}(s,{\rm As}\,\pi_1\otimes\pi_2) &= L_{\rm Gal} (s,{\rm As}\,\pi_1\otimes\pi_2),\\
\varepsilon_{\rm PRS} (s,{\rm As}\,\pi_1\otimes\pi_2,\psi,\xi) &= \omega(4\xi^2)^{-1}|4\xi^2|_F^{-2s+1}\omega_{E/F}(-1)\varepsilon_{\rm Gal} (s,{\rm As}\,\pi_1\otimes\pi_2,\psi).
\end{align*}
\end{corollary}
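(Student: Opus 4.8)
The plan is to derive the Corollary entirely from Theorem~\ref{T:twisted Asai gamma factor} (the $\gamma$-factor identity), with a single analytic input to pin down the $L$-factor. Set
\[
c(s):=\omega(4\xi^2)^{-1}|4\xi^2|_F^{-2s+1}\omega_{E/F}(-1),
\]
which is a monomial in $q^{\pm s}$ when $F$ is non-archimedean and a nowhere-vanishing entire function when $F$ is archimedean; in either case it is a unit. By Theorem~\ref{T:twisted Asai gamma factor} (which applies precisely because $\pi_2$ is a subquotient of a principal series), $\gamma_{\rm PSR}(s,{\rm As}\,\pi_1\otimes\pi_2,\psi,\xi)=c(s)\,\gamma_{\Gal}(s,{\rm As}\,\pi_1\otimes\pi_2,\psi)$. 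Substituting $\gamma=\varepsilon\cdot L(1-s,{\,\cdot\,}^\vee)/L(s)$ on each side and using that $\varepsilon_{\rm PSR}$, $\varepsilon_{\Gal}$ and $c$ are units, one gets a unit $e(s)$ with
\[
\frac{L_{\rm PSR}(s,{\rm As}\,\pi_1\otimes\pi_2)}{L_{\Gal}(s,{\rm As}\,\pi_1\otimes\pi_2)}
= e(s)\,
\frac{L_{\rm PSR}(1-s,{\rm As}\,\pi_1^\vee\otimes\pi_2^\vee)}{L_{\Gal}(1-s,{\rm As}\,\pi_1^\vee\otimes\pi_2^\vee)} .
\]
When $F$ is archimedean this is read modulo nowhere-vanishing holomorphic functions, that being the ambiguity in $L_{\rm PRS}$.

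Next I would show that $R(s):=L_{\rm PSR}(s,{\rm As}\,\pi_1\otimes\pi_2)/L_{\Gal}(s,{\rm As}\,\pi_1\otimes\pi_2)$ has neither zeros nor poles on $\C$. Neither $L$-factor has zeros (each is a product of Tate $\Gamma$-factors, respectively the reciprocal of a polynomial in $q^{-s}$ with constant term $1$). Since $\pi_1$ and $\pi_2$ are unitary, the quantity $\Lambda({\rm As}\,\pi_1\otimes\pi_2)$ of \cite[p.~228]{Ikeda1992} is defined so that all poles of $L_{\rm PSR}(s,{\rm As}\,\pi_1\otimes\pi_2)$ and of $L_{\Gal}(s,{\rm As}\,\pi_1\otimes\pi_2)$ have real part $\le\Lambda$, and the same bound holds for $\pi_1^\vee\otimes\pi_2^\vee$ because $\pi_i^\vee$ is unitary with the same exponents. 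Hence $R(s)$ is holomorphic and nowhere zero on $\{{\rm Re}(s)>\Lambda\}$, while the right-hand side of the last display is holomorphic and nowhere zero on $\{{\rm Re}(s)<1-\Lambda\}$; as $\Lambda<1/2$ these two half-planes cover $\C$, so $R(s)$ is holomorphic and nowhere zero everywhere.

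To conclude, when $F$ is non-archimedean $R(s)=P_{\Gal}(q^{-s})/P_{\rm PSR}(q^{-s})$ for polynomials with constant term $1$; having no zeros or poles for $q^{-s}\in\C^\times$, these two polynomials have the same roots with the same multiplicities, hence are equal, so $L_{\rm PSR}=L_{\Gal}$. When $F$ is archimedean, $L_{\Gal}(s,{\rm As}\,\pi_1\otimes\pi_2)=R(s)^{-1}L_{\rm PSR}(s,{\rm As}\,\pi_1\otimes\pi_2)$ with $R^{\pm1}$ holomorphic and nowhere zero, so $L_{\Gal}$ inherits both properties characterizing $L_{\rm PRS}$ (holomorphy of $L^{-1}Z(f^{(s)},W_1,W_2)$ for all good sections $f^{(s)}$ and all $W_1,W_2$, and realizability of a non-zero value at every $s_0$), whence $L_{\rm PRS}=L_{\Gal}$. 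The $\varepsilon$-factor identity then follows by unwinding definitions: $\varepsilon_{\rm PSR}(s,\cdot)=\gamma_{\rm PSR}(s,\cdot)L_{\rm PSR}(s,\cdot)/L_{\rm PSR}(1-s,\cdot^\vee)=c(s)\,\gamma_{\Gal}(s,\cdot)L_{\Gal}(s,\cdot)/L_{\Gal}(1-s,\cdot^\vee)=c(s)\,\varepsilon_{\Gal}(s,\cdot)$, which is the stated formula.

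I expect the main obstacle to be the pole-localization step: one must make precise that the hypothesis $\Lambda({\rm As}\,\pi_1\otimes\pi_2)<1/2$ really forces $L_{\rm PSR}$ to be holomorphic on $\{{\rm Re}(s)>\Lambda\}$. For $L_{\Gal}$ this is an elementary computation with the exponents of the unitary $\pi_1,\pi_2$, but for $L_{\rm PSR}$ it rests on the analysis of the domain of absolute convergence of $\Psi(f^{(s)},W_1,W_2)$ and the holomorphy of good sections there, i.e.\ on the precise content of the definition of $\Lambda$ in \cite[p.~228]{Ikeda1992}. Once that input is granted, everything above is formal, including the routine verification that $\Lambda$ for the contragredient agrees with $\Lambda$ for $\pi_1\otimes\pi_2$.
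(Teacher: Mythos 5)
Your proposal is correct and follows essentially the same route as the paper: the paper likewise invokes \cite[Lemma 2.1]{Ikeda1992} to get absolute convergence of $\Psi(f^{(s)},W_1,W_2)$ for ${\rm Re}(s)>\Lambda({\rm As}\,\pi_1\otimes\pi_2)$, hence holomorphy of $L_{\rm PRS}(s,\cdot)$ there and of $L_{\rm PRS}(1-s,\cdot^\vee)$ for ${\rm Re}(s)<1-\Lambda$, notes the same for $L_{\Gal}$, and then concludes from Theorem \ref{T:twisted Asai gamma factor} exactly as you do. The only difference is presentational: you spell out the separation-of-poles bookkeeping and the non-archimedean/archimedean endgame that the paper compresses into ``the assertion then follows,'' and the pole-localization input you flag as the main obstacle is precisely what the citation to Ikeda supplies.
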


\begin{proof}
By \cite[Lemma 2.1]{Ikeda1992}, for any good section $f^{(s)}$ and $W_1 \in \mathscr{W}(\pi_1,\psi_E)_0$, $W_2 \in \mathscr{W}(\pi_2,\psi)_0$, the local zeta integral defining $Z(f^{(s)},W_1,W_2)$ is absolutely converge for ${\rm Re}(s)>\Lambda({\rm As}\,\pi_1\otimes\pi_2)$. Therefore, $L_{\rm PRS}(s,{\rm As}\,\pi_1\otimes\pi_2)$ is holomorphic for ${\rm Re}(s)>\Lambda({\rm As}\,\pi_1\otimes\pi_2)$. Similarly, $L_{\rm PRS}(1-s,{\rm As}\,\pi_1^{\vee}\otimes\pi_2^{\vee})$ is holomorphic for ${\rm Re}(s)<1-\Lambda({\rm As}\,\pi_1\otimes\pi_2)$. The assumption $\Lambda({\rm As}\,\pi_1\otimes\pi_2)<1/2$ then implies that $L_{\rm PRS}(s,{\rm As}\,\pi_1\otimes\pi_2)$ and $L_{\rm PRS}(1-s,{\rm As}\,\pi_1^{\vee}\otimes\pi_2^{\vee})$ have no common pole. On the other hand, it can be verified directly that $L_{\rm Gal}(s,{\rm As}\,\pi_1\otimes\pi_2)$ and $L_{\rm Gal}(1-s,{\rm As}\,\pi_1^{\vee}\otimes\pi_2^{\vee})$ have no poles for ${\rm Re}(s)>\Lambda({\rm As}\,\pi_1\otimes\pi_2)$ and ${\rm Re}(s)<1-\Lambda({\rm As}\,\pi_1\otimes\pi_2)$, respectively. The assertion then follows from Theorem \ref{T:twisted Asai gamma factor}.
\end{proof}

\begin{rmk}
If $E=F\times F$, then Corollary \ref{C:local factors for twisted Asai} holds for any $\pi_1$ and $\pi_2$ by the results of \cite{Rama2000}.

Assume $F$ is non-archimedean and $E$ is a field. When both $\pi_1$ and $\pi_2$ are supercuspidal representations, Corollary \ref{C:local factors for twisted Asai} can be proved by using a global-to-local argument and the global functoriality of the Asai transfer to $\GL_4$ (Cf. The proof of Corollary \ref{C:dichotomy of trilinear forms}). When $\pi_1$ is not supercuspidal and $\pi_2$ is supercuspidal, so far we are not able to prove Corollary \ref{C:local factors for twisted Asai} in this case. Note that the last mentioned case was assumed to be true in the proof of \cite[Lemma 7.3]{Rama2002}.

\end{rmk}

\begin{corollary}\label{C:dichotomy of trilinear forms}
Assume $\omega=1$. Then ${\rm Hom}_{\GL_2(F)}(\pi_1\otimes \pi_2,\C) \neq 0$ if and only if $$\omega_{E/F}(-1)\varepsilon_{\Gal}\left(\frac{1}{2},{\rm As}\,\pi_1\otimes \pi_2\right)=1.$$ 
Here $\omega_{E/F}$ is the quadratic character of $F^{\times}$ associated with $E/F$ by local class field theory.
\end{corollary}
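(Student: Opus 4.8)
The plan is to deduce the dichotomy from the general local trilinear-form result \cite[Theorem 1.2]{WTG2008}, whose criterion is an epsilon-factor condition at $s=\tfrac12$, and to transport that condition to the Galois side via Theorem \ref{T:twisted Asai gamma factor} (Theorem B), with a global-to-local argument supplying the one case that theorem does not cover.

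First I would check that the statement is well posed. Under the hypothesis $\omega=1$ one has $\pi_i^\vee\cong\pi_i\otimes(\omega_{\pi_i}^{-1}\circ\det)$, hence $\mathrm{As}(\pi_1^\vee)\otimes\pi_2^\vee\cong\mathrm{As}(\pi_1)\otimes\pi_2\otimes\bigl(\omega_{\pi_1}^{-1}|_{F^\times}\cdot\omega_2^{-1}\bigr)=\mathrm{As}(\pi_1)\otimes\pi_2$, so the parameter $(r\circ\phi_{\pi_1})\otimes\phi_{\pi_2}$ is self-dual; moreover a short computation with the determinant of an Asai representation gives $\det\bigl((r\circ\phi_{\pi_1})\otimes\phi_{\pi_2}\bigr)=\omega^{4}=1$. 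Consequently $\varepsilon_{\Gal}(s,\mathrm{As}\,\pi_1\otimes\pi_2,\psi)\,\varepsilon_{\Gal}(1-s,\mathrm{As}\,\pi_1^\vee\otimes\pi_2^\vee,\psi)=1$, and at $s=\tfrac12$ the quantity $\varepsilon_{\Gal}(\tfrac12,\mathrm{As}\,\pi_1\otimes\pi_2,\psi)$ is a sign that does not depend on $\psi$; this legitimises the notation $\varepsilon_{\Gal}(\tfrac12,\mathrm{As}\,\pi_1\otimes\pi_2)$ in the statement. The same bookkeeping, together with $L_{\rm PSR}(1-s,\mathrm{As}\,\pi_1^\vee\otimes\pi_2^\vee)=L_{\rm PSR}(1-s,\mathrm{As}\,\pi_1\otimes\pi_2\otimes\omega^{-1})=L_{\rm PSR}(1-s,\mathrm{As}\,\pi_1\otimes\pi_2)$, shows that at $s=\tfrac12$ the PSR and Galois $\gamma$-factors coincide with the corresponding $\varepsilon$-factors; and from \eqref{E:dependence on psi 2} one sees that $\varepsilon_{\rm PSR}(\tfrac12,\mathrm{As}\,\pi_1\otimes\pi_2,\psi,\xi)$ is likewise independent of $\psi$ and $\xi$, since the exponents $8s-4$ and $-4s+2$ vanish at $s=\tfrac12$ and $\omega(a)=1$.

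Next I would invoke \cite[Theorem 1.2]{WTG2008}: under $\omega=1$, $\mathrm{Hom}_{\GL_2(F)}(\pi_1\otimes\pi_2,\mathbb{C})\neq 0$ if and only if $\varepsilon_{\rm PSR}(\tfrac12,\mathrm{As}\,\pi_1\otimes\pi_2,\psi,\xi)=1$ (should the criterion there be phrased instead through a Rankin-Selberg root number of $\pi_1$ against the base change of $\pi_2$, I would first reconcile it with $\varepsilon_{\rm PSR}$ using the $n=1$ results of \S\ref{S:2}--\S\ref{S3} and an elementary manipulation of Weil-Deligne parameters). Then I would apply Theorem \ref{T:twisted Asai gamma factor}: when $\pi_2$ is a subquotient of a principal series,
\[
\gamma_{\rm PSR}(s,\mathrm{As}\,\pi_1\otimes\pi_2,\psi,\xi)=\omega(4\xi^2)^{-1}|4\xi^2|_F^{-2s+1}\,\omega_{E/F}(-1)\,\gamma_{\Gal}(s,\mathrm{As}\,\pi_1\otimes\pi_2,\psi).
\]
Setting $s=\tfrac12$ and $\omega=1$ kills the monomial $\omega(4\xi^2)^{-1}|4\xi^2|_F^{0}=1$, and by the previous paragraph the $\gamma$-factors become the $\varepsilon$-factors, so $\varepsilon_{\rm PSR}(\tfrac12,\mathrm{As}\,\pi_1\otimes\pi_2,\psi,\xi)=\omega_{E/F}(-1)\,\varepsilon_{\Gal}(\tfrac12,\mathrm{As}\,\pi_1\otimes\pi_2)$. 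For $\pi_2$ supercuspidal (the case outside the scope of Theorem B) I would instead run the global argument used in the proof of Corollary \ref{equality of epsilon factors}: globalise $E/F$ to a quadratic extension of number fields $\mathbf{E}/\mathbf{F}$ specialising to $E/F$ at a place $v_0$, use \cite[Proposition 5.1]{Sha90} to construct cuspidal $\bm{\pi}_1,\bm{\pi}_2$ with local components $\pi_1,\pi_2$ at $v_0$, unramified at all other finite places and with trivial product of central characters, invoke the functoriality of the global Asai transfer to $\GL_4$ \cite{Kri03} to give meaning to $\mathrm{As}\,\bm{\pi}_1$, and compare the global functional equations of $L_{\rm PSR}(s,\mathrm{As}\,\bm{\pi}_1\otimes\bm{\pi}_2)$ and $L_{\Gal}(s,\mathrm{As}\,\bm{\pi}_1\otimes\bm{\pi}_2)$; since the desired local relation already holds at every place $\neq v_0$ by Theorem B, it holds at $v_0$ as well. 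Combining with \cite[Theorem 1.2]{WTG2008} then gives $\mathrm{Hom}_{\GL_2(F)}(\pi_1\otimes\pi_2,\mathbb{C})\neq 0\Leftrightarrow\omega_{E/F}(-1)\varepsilon_{\Gal}(\tfrac12,\mathrm{As}\,\pi_1\otimes\pi_2)=1$. (For $F$ non-archimedean with $\pi_1$ or $\pi_2$ not supercuspidal this also follows from Prasad \cite{Pra92}, and for $F$ archimedean from Loke \cite{Lok01} together with Theorem \ref{T:main theorem}.)

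The hard part will be identifying precisely which normalised root number occurs in \cite[Theorem 1.2]{WTG2008} and matching its conventions — the choice of additive character, the auxiliary element $\xi$, and whether the relevant epsilon factor is the doubling/PSR one, a Rankin-Selberg one for a base change, or the Galois one — with $\varepsilon_{\rm PSR}(s,\mathrm{As}\,\pi_1\otimes\pi_2,\psi,\xi)$ as normalised in \S\ref{S:4.3}. A secondary, purely computational, point is the determinant of an Asai representation used above, which must be verified carefully so that the sign $\varepsilon_{\Gal}(\tfrac12,\mathrm{As}\,\pi_1\otimes\pi_2)$ is genuinely unambiguous.
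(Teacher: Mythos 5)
Your route is essentially the paper's: quote \cite[Theorem 1.2]{WTG2008}, convert its root-number criterion to the Galois side via the $\gamma$-factor identity of Theorem \ref{T:twisted Asai gamma factor}, supply the case not covered by that theorem with the global-to-local argument (globalization via \cite[Proposition 5.1]{Sha90}, the Asai transfer of \cite{Kri03}, and the two global functional equations), and fall back on Prasad and Loke elsewhere. The one step that would fail as literally stated is your primary treatment of the case ``$\pi_2$ supercuspidal'': you propose to globalize \emph{both} $\pi_1$ and $\pi_2$ with \cite[Proposition 5.1]{Sha90}, but that proposition only globalizes supercuspidal local components, so it gives nothing when $\pi_1$ is a principal-series subquotient or a twist of Steinberg. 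The paper orders the cases the other way: it first invokes \cite{Pra92} to dispose of everything except ``both $\pi_1$ and $\pi_2$ supercuspidal,'' and only then runs Gan plus the global argument. Your parenthetical appeal to Prasad covers exactly the problematic mixed case, so your proof survives, but that appeal is load-bearing rather than optional.

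Two smaller points. Restricting to the both-supercuspidal case also makes all the relevant local $L$-factors equal to $1$, so the $\gamma$-identity at $s=\tfrac12$ \emph{is} the $\varepsilon$-identity; your general claim that $\gamma(\tfrac12)=\varepsilon(\tfrac12)$ by self-duality tacitly needs $L(\tfrac12,{\rm As}\,\pi_1\otimes\pi_2)$ to be finite, which is not automatic for non-tempered unitary representations (if $L$ has a pole at $s=\tfrac12$, the ratio $L(1-s)/L(s)$ there is only $\pm1$); this is harmless only because those cases are anyway settled by Prasad/Loke. Finally, the convention issue you flag is genuine but already resolved in the paper: the $\varepsilon$-factor of \cite{WTG2008} differs from $\varepsilon_{\rm PSR}$ precisely by $\omega_{E/F}(-1)$ (see the remark citing \cite[p.~13]{WTG2008}), and that discrepancy is the source of the sign $\omega_{E/F}(-1)$ in the statement of the corollary.
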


\begin{proof}
Except when both $\pi_1$ and $\pi_2$ are supercuspidal, the result is proved in \cite{Pra92}. Therefore, we assume both $\pi_1$ and $\pi_2$ are supercuspidal. 

Note that 
\begin{align*}
L_{\Gal}(s,{\rm As}\,\pi_1\otimes\pi_2)&=L_{\rm PSR}(s,{\rm As}\,\pi_1\otimes\pi_2)=1, \\
L_{\Gal}(s,{\rm As}\,\pi_1^{\vee}\otimes\pi_2^{\vee})&=L_{\rm PSR}(s,{\rm As}\,\pi_1^{\vee}\otimes\pi_2^{\vee})=1.
\end{align*}
 By \cite[Theorem 1.2]{WTG2008}, it suffices to prove
\begin{align*}
\gamma_{\rm PRS} (s,{\rm As}\,\pi_1\otimes\pi_2,\psi,\xi) &= |4\xi^2|_{F}^{-2s+1}\omega_{E/F}(-1)\gamma_{\rm Gal} (s,{\rm As}\,\pi_1\otimes\pi_2,\psi).
\end{align*}
Note that the definition of $\varepsilon$-factor in \cite{WTG2008} is different from the one defined in this article by $\omega_{E/F}(-1)$ (Cf. \cite[p.\,13]{WTG2008}). 

Let ${\bf E}/\bfb{F}$ be a quadratic extension of number fields such that there exist a finite place $v_0$ of $\bf F$ such that ${\bf E}_{v_0}=E$ and ${\bf F}_{v_0}=F$. Fix a non-trivial additive character ${\bm \psi}$ of $\A_{\bf F}/{\bf F}$ and an element $\bm{ \xi} \in {\bf E}^{\times}$ such that ${\rm tr}_{{\bf E}/{\bf F}}({\bm \xi})=0$. By \cite[Proposition 5.1]{Sha90}, there exist irreducible cuspidal automorphic representations $\bm \pi_1$ and $\bm \pi_2$ of $\GL_2(\A_{\bf E})$ and $\GL_2(\A_{\bf F})$ respectively such that
\begin{itemize}
\item ${\bm \pi}_{1,v_0}=\pi_1$,\, ${\bm \pi}_{2,v_0}=\pi_2$
\item ${\bm \pi}_{1,v}$ and ${\bm \pi}_{2,v}$ are spherical for any finite place $v \neq v_0$.
\end{itemize}
Put ${\omega} = {\omega_{{\bm \pi}_1}} \vert_{\A_{\bf F}^{\times}}\cdot \omega_{{\bm \pi}_2}$. By Theorem \ref{T:twisted Asai gamma factor}, for each place $v\neq v_0$ of $F$, we have
\begin{align}\label{E:4.3.4}
\gamma_{\rm PSR}(s,{\rm As}\,{\bm \pi}_{1,v} \otimes {\bm \pi}_{2,v},{\bm \psi}_v,{\bm \xi}) &={\omega}_v(4{\bm \xi}^2)^{-1}|4{\bm \xi}^2|_{{\bf F}_v}^{-2s+1}\omega_{E_v/F_v}(-1)\gamma_{\Gal}(s,{\rm As}\,{\bm \pi}_{1,v} \otimes {\bm \pi}_{2,v},{\bm \psi}_v).
\end{align}
On the other hand, by \cite[Theorem 6.7]{Kri03}, the irreducible admissible representation ${\rm As}\,{\bm \pi}_1 = \otimes_v{\rm As}\,{\bm \pi}_{1,v}$ is an isobaric automorphic representation of $\GL_4(\A_{\bm F})$. Since ${\rm As}\,{\bm \pi}_1$ is isobaric, it follows from the global functional equation for Rankin-Selberg $L$-functions that the global automorphic $L$-function $$L_{\rm Gal}(s,{\rm As}\,{\bm \pi}_1\otimes{\bm \pi}_2) := \prod_{v}L_{\rm Gal}(s,{\rm As}\,{\bm \pi}_{1,v}\otimes{\bm \pi}_2)$$ has meromorphic continuation to $s \in \C$ and satisfies the functional equation 
\begin{align}\label{E:4.3.5}
L_{\rm Gal}(s,{\rm As}\,{\bm \pi}_1\otimes{\bm \pi}_2) = \varepsilon_{\rm Gal}(s,{\rm As}\,{\bm \pi}_1\otimes{\bm \pi}_2) L_{\rm Gal}(1-s,{\rm As}\,{\bm \pi}_1^{\vee}\otimes{\bm \pi}_2^{\vee}).
\end{align}
The assertion then follows from (\ref{E:dependence on psi 2}), (\ref{E:4.3.4}), (\ref{E:4.3.5}), and the global functional equation for $L_{\rm PSR}(s,{\rm As}\,{\bm \pi}_1\otimes{\bm \pi}_2)$. This completes the proof.

\end{proof}

Now we switch to global setting. Let $E/F$ be quadratic extension of number fields. Fix a non-trivial additive character $\psi$ of $\A_F / F$ and $\xi \in E^{\times}$ with ${\rm tr}_{E/F}(\xi)=0$. Let $\pi_1$ and $\pi_2$ be irreducible cuspidal automorphic representations of $\GL_2(\A_E)$ and $\GL_2(\A_F)$, respectively. Let $\omega_1$ and $\omega_2$ be the central characters of $\pi_1$ and $\pi_2$, respectively. Put $\omega=\omega_1\vert_{\A_F^{\times}}\cdot\omega_2$. Define
\begin{align*}
L_{\rm PSR}(s,{\rm As}\,\pi_1\otimes\pi_2) &:= \prod_{v}L_{\rm PSR}(s,{\rm As}\,\pi_{1,v}\otimes\pi_{2,v}),\\
\varepsilon_{\rm PSR}(s,{\rm As}\,\pi_{1}\otimes\pi_{2}) &:= \prod_{v}\varepsilon_{\rm PSR}(s,{\rm As}\,\pi_{1,v}\otimes\pi_{2,v},\psi_v,\xi).
\end{align*}
In \cite[\S 5]{PSR1987} and \cite[\S 6]{Ikeda1989}, it is proved that $L_{\rm PSR}(s,{\rm As}\,\pi_1\otimes\pi_2)$ can be meromorphically continued to $s \in \C$ and satisfies the functional equation
\begin{align}\label{E:global functional equation for twisted Asai}
L_{\rm PSR}(s,{\rm As}\,\pi_1\otimes\pi_2) = \varepsilon_{\rm PSR}(s,{\rm As}\,\pi_{1}\otimes\pi_{2})L_{\rm PSR}(1-s,{\rm As}\,\pi_1^{\vee}\otimes\pi_2^{\vee}).
\end{align}

\begin{thm} \label{T:twisted Asai factor}
For each place $v$ of $F$, we have
\begin{align*}
L_{\rm PRS}(s,{\rm As}\,\pi_{1,v}\otimes\pi_{2,v}) &= L_{\rm Gal} (s,{\rm As}\,\pi_{1,v}\otimes\pi_{2,v}),\\
\varepsilon_{\rm PRS} (s,{\rm As}\,\pi_{1,v}\otimes\pi_{2,v},\psi_v,\xi) &= \omega_v(4\xi^2)^{-1}|4\xi^2|_{F_v}^{-2s+1}\omega_{E_v/F_v}(-1)\varepsilon_{\rm Gal} (s,{\rm As}\,\pi_{1,v}\otimes\pi_{2,v},\psi_v).
\end{align*}
\end{thm}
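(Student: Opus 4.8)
The plan is a global-to-local argument that reduces the theorem to the local results already established in the paper, together with the functoriality of the Asai transfer.

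First I would observe that the statement is local at each place $v$, so fix $v$. If $\pi_{2,v}$ is a subquotient of a principal series representation of $\GL_2(F_v)$ — which is the case at every archimedean place and at all but finitely many finite places — then the assertion is exactly Corollary \ref{C:local factors for twisted Asai}: since $\pi_{1,v}$ and $\pi_{2,v}$ are local components of the cuspidal representations $\pi_1$ and $\pi_2$ they are unitary, and their exponents satisfy the Jacquet--Shalika bound, so that $\Lambda({\rm As}\,\pi_{1,v}\otimes\pi_{2,v})<1/2$. If $v$ splits in $E$ (so $E_v\cong F_v\times F_v$), the identity holds for arbitrary $\pi_{2,v}$ — in particular supercuspidal ones — by Ramakrishnan's results recorded in the remark after Corollary \ref{C:local factors for twisted Asai}. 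Hence the only remaining case is a finite place $v_0$ at which $E_{v_0}$ is a field and $\pi_{2,v_0}$ is supercuspidal.

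For such a $v_0$ I would run the globalization used in the proofs of Corollary \ref{equality of epsilon factors} and Corollary \ref{C:dichotomy of trilinear forms}. Using \cite[Proposition 5.1]{Sha90}, choose a quadratic extension ${\bf E}/{\bf F}$ of number fields with ${\bf F}_{v_0}\cong F$ and ${\bf E}_{v_0}\cong E$, a non-trivial additive character $\bm\psi$ of $\A_{\bf F}/{\bf F}$ and $\bm\xi\in{\bf E}^\times$ with ${\rm tr}_{{\bf E}/{\bf F}}(\bm\xi)=0$ localizing to $\psi,\xi$ at $v_0$ (the choice being immaterial by (\ref{E:dependence on psi 2})), and irreducible cuspidal automorphic representations $\bm\pi_1$ of $\GL_2(\A_{\bf E})$, $\bm\pi_2$ of $\GL_2(\A_{\bf F})$ with $\bm\pi_{1,v_0}=\pi_{1,v_0}$, $\bm\pi_{2,v_0}=\pi_{2,v_0}$, and $\bm\pi_{i,v}$ spherical at every finite place $v\neq v_0$. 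By the previous paragraph the asserted local identities hold for $(\bm\pi_1,\bm\pi_2)$ at every place $v\neq v_0$. By \cite[Theorem 6.7]{Kri03}, ${\rm As}\,\bm\pi_1=\otimes_v{\rm As}\,\bm\pi_{1,v}$ is an isobaric automorphic representation of $\GL_4(\A_{\bf F})$; combining this with the Rankin--Selberg theory for $\GL_4\times\GL_2$ and the compatibility of the local Langlands correspondence with Rankin--Selberg local factors gives the global functional equation of $L_{\rm Gal}(s,{\rm As}\,\bm\pi_1\otimes\bm\pi_2)=\prod_v L_{\rm Gal}(s,{\rm As}\,\bm\pi_{1,v}\otimes\bm\pi_{2,v})$ with $\varepsilon$-factor $\prod_v\varepsilon_{\rm Gal}(s,{\rm As}\,\bm\pi_{1,v}\otimes\bm\pi_{2,v},\bm\psi_v)$, while (\ref{E:global functional equation for twisted Asai}) is the analogous functional equation on the PSR side. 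A global functional equation forces the product over all places of the local $\gamma$-factors to equal $1$; combining the two such identities with the place-by-place identities at all $v\neq v_0$ and with the triviality of $\prod_v\omega_v(4\bm\xi^2)^{-1}|4\bm\xi^2|_{F_v}^{-2s+1}\omega_{E_v/F_v}(-1)$ — which follows from the product formula, since $\omega:=\omega_{\bm\pi_1}|_{\A_{\bf F}^\times}\omega_{\bm\pi_2}$ is an automorphic central character, $4\bm\xi^2$ and $-1$ lie in ${\bf F}^\times$, and $\omega_{{\bf E}/{\bf F}}$ is a Hecke character — I obtain
\[
\gamma_{\rm PSR}(s,{\rm As}\,\pi_{1,v_0}\otimes\pi_{2,v_0},\psi,\xi)=\omega_{v_0}(4\xi^2)^{-1}|4\xi^2|_{F_{v_0}}^{-2s+1}\omega_{E_{v_0}/F_{v_0}}(-1)\,\gamma_{\rm Gal}(s,{\rm As}\,\pi_{1,v_0}\otimes\pi_{2,v_0},\psi).
\]

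It remains to upgrade this $\gamma$-factor identity at $v_0$ to the separate $L$- and $\varepsilon$-factor identities, and here I would argue as in the proof of Corollary \ref{C:local factors for twisted Asai}: $\pi_{2,v_0}$ is supercuspidal, hence tempered, and $\pi_{1,v_0}$ has exponents bounded away from $1/2$, so $\Lambda({\rm As}\,\pi_{1,v_0}\otimes\pi_{2,v_0})<1/2$; therefore $L_{\rm PSR}(s,{\rm As}\,\pi_{1,v_0}\otimes\pi_{2,v_0})$ and $L_{\rm PSR}(1-s,{\rm As}\,\pi_{1,v_0}^\vee\otimes\pi_{2,v_0}^\vee)$ have no common pole, and likewise on the Galois side. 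Since the inverses of these $L$-factors are polynomials in $q^{-s}$ with constant term $1$, the $\gamma$-factor determines them; because the twisting factor above is a unit in $\C[q^s,q^{-s}]$, the $\gamma$-factor identity then forces $L_{\rm PSR}(s,{\rm As}\,\pi_{1,v_0}\otimes\pi_{2,v_0})=L_{\rm Gal}(s,{\rm As}\,\pi_{1,v_0}\otimes\pi_{2,v_0})$ and, in turn, the claimed identity of $\varepsilon$-factors. The hardest point will be exactly this last step: it rests on the holomorphy control $\Lambda<1/2$, i.e. on known approximations toward the Ramanujan conjecture for $\GL_2$ and for the cuspidal constituents of ${\rm As}\,\pi_1$; one must also confirm that Shahidi's globalization realizes the prescribed supercuspidal component at $v_0$ with every other finite place unramified, and that the product formula correctly accounts for the archimedean and $E$-ramified places.
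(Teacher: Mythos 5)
Your overall strategy --- reduce to a finite place $v_0$ at which $\pi_{2,v_0}$ is supercuspidal, globalize, invoke Krishnamurthy's isobaric Asai transfer together with the two global functional equations, cancel the places $v\neq v_0$ using Theorem \ref{T:twisted Asai gamma factor}/Corollary \ref{C:local factors for twisted Asai} and the product formula, and finally upgrade the resulting $\gamma$-identity at $v_0$ to the $L$- and $\varepsilon$-identities via the bound $\Lambda<1/2$ --- is exactly the paper's. However, one step as you wrote it does not go through: in the remaining case you discard the given global data and re-globalize \emph{both} local components, asking \cite[Proposition 5.1]{Sha90} to produce a cuspidal $\bm\pi_1$ on $\GL_2(\A_{\bf E})$ with $\bm\pi_{1,v_0}=\pi_{1,v_0}$ and spherical components at all other finite places. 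Shahidi's proposition, as used throughout this paper, globalizes \emph{supercuspidal} local representations; but in Theorem \ref{T:twisted Asai factor} the only hypothesis in the remaining case is that $\pi_{2,v_0}$ is supercuspidal, while $\pi_{1,v_0}$ is an arbitrary local component of $\pi_1$ --- it may be a ramified principal series or a special representation, and no result quoted here (nor a standard one) realizes such a representation exactly as the unique ramified finite component of a cuspidal form. The step is also unnecessary: the theorem already hands you a global cuspidal $\pi_1$. The paper keeps it and globalizes only $\pi_{2,v_0}$, producing an auxiliary cuspidal $\sigma$ of $\GL_2(\A_F)$ with $\sigma_{v_0}=\pi_{2,v_0}$ and $\sigma_v$ spherical at every finite $v\neq v_0$; then Theorem \ref{T:twisted Asai gamma factor} applies at all $v\neq v_0$, and the functional equation of $L_{\rm Gal}(s,{\rm As}\,\pi_1\otimes\sigma)$ (via isobaricity of ${\rm As}\,\pi_1$ and the compatibility of the local Langlands correspondence with $\GL_4\times\GL_2$ Rankin--Selberg factors, \cite{BH1999}, \cite{JPSS83}) yields the $\gamma$-identity at $v_0$ for the pair $(\pi_{1,v_0},\sigma_{v_0})=(\pi_{1,v_0},\pi_{2,v_0})$, after which your final upgrading argument is the right one.

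A secondary inaccuracy: the Jacquet--Shalika bound on exponents ($<1/2$) is not enough to guarantee $\Lambda({\rm As}\,\pi_{1,v}\otimes\pi_{2,v})<1/2$, because the Asai exponents involve twice the exponent of $\pi_{1,v}$ plus that of $\pi_{2,v}$; the paper appeals to the stronger Kim--Shahidi bound \cite{KM2002} for this (your later remark about "approximations toward the Ramanujan conjecture" points in the right direction, but the first paragraph's citation is insufficient). With these two corrections --- keep the given $\pi_1$ and globalize only the supercuspidal $\pi_{2,v_0}$, and use \cite{KM2002} for $\Lambda<1/2$ --- your argument coincides with the paper's proof.
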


\begin{proof}
We may assume both $\pi_1$ and $\pi_2$ are unitary. By the result of \cite{KM2002}, for any place $v$ of F, we have $$\Lambda({\rm As}\,\pi_{1,v}\otimes\pi_{2,v})<1/2.$$
By \cite[Theorem 6.7]{Kri03}, the irreducible admissible representation ${\rm As}\,\pi_1 = \otimes_v{\rm As}\,\pi_{1,v}$ is an isobaric automorphic representation of $\GL_4(\A_F)$.

Fix a place $v_0$ of $F$. If $\pi_{2,v_0}$ is a subquotient of a principal series representation, then the theorem follows from Corollary \ref{C:local factors for twisted Asai}. Assume $v_0$ is a finite place and $\pi_{2,v_0}$ is a supercuspidal representation. By \cite[Proposition 5.1]{Sha90}, there exist an irreducible unitary cuspidal automorphic representation $\sigma$ of $\GL_2(\A_F)$ such that
\begin{itemize}
\item $\sigma_{v_0}=\pi_{2,v_0}$.
\item $\sigma_{v}$ is spherical for any finite place $v \neq v_0$.
\end{itemize}
Let $\omega_2'$ be the central character of $\sigma$. Put $\omega'=\omega_1\vert_{F^{\times}}\cdot\omega_2'.$ By Theorem \ref{T:twisted Asai gamma factor}, for each place $v\neq v_0$ of $F$, we have
\begin{align}\label{E1}
\gamma_{\rm PSR}(s,{\rm As}\,\pi_{1,v} \otimes \sigma_v,\psi_v,\xi) &=\omega_v'(4\xi^2)^{-1}|4\xi^2|_{F_v}^{-2s+1}\omega_{E_v/F_v}(-1)\gamma_{\Gal}(s,{\rm As}\,\pi_{1,v} \otimes \sigma_v,\psi_v).
\end{align}
On the other hand, for each finite place $v$ of $F$, the gamma factor $\gamma_{\Gal}(s,{\rm As}\,\pi_{1,v} \otimes \sigma_v,\psi_v)$ is equal to the gamma factor of defined by the Rankin-Selberg local zeta integrals (Cf. \cite{BH1999} and \cite{JPSS83}). Since ${\rm As}\,\pi_1$ is isobaric, it follows from the global functional equation for Rankin-Selberg $L$-functions that the global automorphic $L$-function $L_{\rm Gal}(s,{\rm As}\,\pi_1\otimes\sigma) = \prod_{v}L_{\rm Gal}(s,{\rm As}\,\pi_{1,v}\otimes\sigma_v)$ has meromorphic continuation to $s \in \C$ and satisfies the functional equation 
\begin{align}\label{E2}
L_{\rm Gal}(s,{\rm As}\,\pi_1\otimes\sigma) = \varepsilon_{\rm Gal}(s,{\rm As}\,\pi_1\otimes\sigma) L_{\rm Gal}(1-s,{\rm As}\,\pi_1^{\vee}\otimes\sigma^{\vee}).
\end{align}
By (\ref{E:global functional equation for twisted Asai}) with $\sigma$ in place of $\pi_2$, (\ref{E1}), and (\ref{E2}), we have
\begin{align*}
\gamma_{\rm PSR} (s,{\rm As}\,\pi_{1,v_0}\otimes\sigma_{v_0},\psi_{v_0},\xi) &= \omega_{v_0}'(4\xi^2)^{-1}|4\xi^2|_{F_{v_0}}^{-2s+1}\omega_{E_{v_0}/F_{v_0}}(-1)\gamma_{\rm Gal} (s,{\rm As}\,\pi_{1,v_0}\otimes\sigma_{v_0},\psi_{v_0}).
\end{align*}
Since $\Lambda({\rm As}\,\pi_{1,v_0}\otimes\sigma_{v_0})<1/2$, following the same argument as in the proof of Corollary \ref{C:local factors for twisted Asai}, we conclude that
\begin{align*}
L_{\rm PRS}(s,{\rm As}\,\pi_{1,v_0}\otimes\sigma_{v_0}) &= L_{\rm Gal} (s,{\rm As}\,\pi_{1,v_0}\otimes\sigma_{v_0}),\\
\varepsilon_{\rm PRS} (s,{\rm As}\,\pi_{1,v_0}\otimes\sigma_{v_0},\psi_{v_0},\xi) &= \omega_{v_0}'(4\xi^2)^{-1}|4\xi^2|_{F_{v_0}}^{-2s+1}\omega_{E_{v_0}/F_{v_0}}(-1)\varepsilon_{\rm Gal} (s,{\rm As}\,\pi_{1,v_0}\otimes\sigma_{v_0},\psi_{v_0}).
\end{align*}
This completes the proof.

\end{proof}

\subsection{Proof of Theorem \ref{T:twisted Asai gamma factor}}\label{S:4.4}
\begin{proof}
The case $E=F\times F$ is proved in \cite[Theorem 3]{Ikeda1989}. We assume $E$ is a field. For brevity, we denote $\iota=\iota_{\xi}$, $\iota_2=\iota_{2,\xi}$, and $a(\nu) = \bp \nu & 0 \\ 0 & 1\ep \in\GL_2$. 

Let $f^{(s)}$ be a good section of $I(\omega,s)$, and $W_1 \in \mathscr{W}(\pi_1,\psi_E)$, $W_2 \in \mathscr{W}(\pi_2,\psi)$. Fix a holomorphic section $h_2^{(u_2)}$ of $\mathcal{B}(\mu_2|\mbox{ }|_F^{u_2},\nu_2|\mbox{ }|_F^{-u_2})_0$ such that $W_2=W_{\psi,h_2^{(v_2)}}$. Denote $W_2^{(u_2)}=W_{\psi,h_2^{(u_2)}}$. 
\\If $\pi_1$ is a subquotient of ${\rm Ind}_{B(E)}^{\GL_2(E)}(\mu_1|\mbox{ }|_F^{v_1},\nu_1|\mbox{ }|_F^{-v_1})$, fix a holomorphic section $h_1^{(u_1)}$ of $\mathcal{B}(\mu_1|\mbox{ }|_F^{u_1},\nu_1|\mbox{ }|_F^{-u_1})_0$ such that $W_1=W_{\psi_E,h_1^{(v_1)}}$. Denote $\pi_1^{(u_1)}={\rm Ind}_{B(E)}^{\GL_2(E)}(\mu_1|\mbox{ }|_F^{u_1},\nu_1|\mbox{ }|_F^{-u_1})$ and $W_1^{(u_1)}=W_{\psi_E,h_1^{(u_1)}}$. 
\\If $\pi_1$ is supercuspidal, denote $\pi_1^{(u_1)}=\pi_1$ and $W_1^{(u_1)}=W_1$.
\\Let $W_{1,\xi}^{(u_1)} \in \mathscr{W}(\pi_1^{(u_1)},\psi_{\xi})_0$ defined by
$$W_{1,\xi}^{(u_1)}(g)=W_{1}^{(u_1)}(a(\xi)ga(\xi)^{-1}).$$
After twisting $\pi_1$ and $\pi_2$ by unramified characters if necessary, we may assume $\mu_i$, $\nu_i$ are unitary for $i=1,2$. 

Let $D$, $D^{\vee}$, and $\overline{D}$ be subsets of $\C^3$ defined by
\begin{align*}
D&= \left\{(s,u_1,u_2)\in\C^3 \mbox{ }\vert \mbox{ } {\rm Re}(s)>2|{\rm Re}(u_1)|+|{\rm Re}(u_2)| \   \right\},\\
D^{\vee}&= \left\{(s,u_1,u_2)\in\C^3 \mbox{ }\vert \mbox{ } {\rm Re}(1-s)>2|{\rm Re}(u_1)|+|{\rm Re}(u_2)| \   \right\},\\
\mathscr{D} & =\left\{(s,u_1,u_2)\in\C^3 \mbox{ }\left\vert \mbox{ } {\rm Re}(u_2)>2|{\rm Re}(u_1)|+\left |{\rm Re}(s)-\frac{1}{2}  \right |-\frac{1}{2} \right.   \right\}.
\end{align*}
By \cite[Lemma 2.1]{Ikeda1992}, the integrals defining $\Psi(f^{(s)},W_1^{(u_1)},W_2^{(u_2)})$ and $\Psi(I_{w_3}^*f^{(s)},W_1^{(u_1)},W_2^{(u_2)})$ are absolutely converge for $(s,u_1,u_2) \in D$ and $(s,u_1,u_2) \in D^{\vee}$,
respectively. If furthermore ${\rm Re}(u_2)>0$, a similar estimation shows that the function 
\begin{align*}
\G(F) &\longrightarrow \C\\
    g=(g_1,g_2) & \longmapsto f^{(s)}(\eta \iota(g))W_1^{(u_1)}(g^{(1)})h_2^{(u_2)}(w_1g^{(2)}) 
\end{align*}
define an integrable function on $F^{\times}{\bf U}_0^{'}(F) \backslash \G(F)$. Therefore, for $(s,u_1,u_2) \in D$ and ${\rm Re}(u_2)>0$, we have
\begin{align*}
&\Psi(f^{(s)},W_1^{(u_1)},W_2^{(u_2)}) \\
&= \int_{F^{\times}{\bf U}_0(F) \backslash {\bf G}(F)}f^{(s)}(\eta\iota(g))W_1^{(u_1)}(g^{(1)})\left (\int_{F}h_2^{(u_2)} ( w_1u(x)g^{(2)} )  \psi(-x)dx\right)dg\\
&=\int_{F^{\times}{\bf U}_0^{'}(F) \backslash \G(F)}f^{(s)}(\eta \iota(g))W_1^{(u_1)}(g^{(1)})h_2^{(u_2)}(w_1g^{(2)})dg\\
&=\int_{\SL_2(F)\backslash \SL_2(E)}\int_{F^{\times}U(F) \backslash \GL_2(F)}\int_{\SL_2(F)} f^{(s)}(\eta \iota(a(\xi)gg_1a(\xi)^{-1},gg_2)    )W_{1,\xi}^{(u_1)}(gg_1)h_2^{(u_2)}(w_1gg_2)dg_2dgdg_1.
\end{align*}
We have similar identity for $\Psi(I_{w_3}^*f^{(s)},W_1^{(u_1)},W_2^{(u_2)})$ when $(s,u_1,u_2) \in D^{\vee}$ and ${\rm Re}(u_2)>0$.
\\Let $F_{f,h_2}^{(s,u_2)}$ and $F_{I_{w_3}^*f,h_2}^{(s,u_2)}$ be functions on $\GL_2(F)\times \SL_2(E)$ defined by the integrals
\begin{align*}
\begin{split}
F_{f,h_2}^{(s,u_2)}(g,g_1) &= \int_{\SL_2(F)}f^{(s)}(\eta \iota(a(\xi)gg_1a(\xi)^{-1},gg_2)    )h_2^{(u_2)}(w_1gg_2)dg_2,\\
F_{I_{w_3}^*f,h_2}^{(s,u_2)}(g,g_1) &= \int_{\SL_2(F)}I_{w_3}^*f^{(s)}(\eta \iota(a(\xi)gg_1a(\xi)^{-1},gg_2)    )h_2^{(u_2)}(w_1gg_2)dg_2,
\end{split}
\end{align*}
which are absolutely converge for 
\begin{align*}
{\rm Re}(s)>0, {\rm Re}(s-u_2)>0\mbox{, and }{\rm Re}(s+u_2)+1>0,\\
{\rm Re}(1-s)>0, {\rm Re}(1-s-u_2)>0\mbox{, and }{\rm Re}(1-s+u_2)+1>0,
\end{align*}
respectively. As functions of $g \in \GL_2(F)$ and $(s,u_2)\in\C^2$, 
\begin{align*}
\begin{split}
F_{f,h_2}^{(s,u_2)}(-,g_1) &\in \mathcal{B}(\nu_2|\mbox{ }|_F^{s-u_2-1/2},\omega_1^{-1}\nu_2^{-1}|\mbox{ }|_F^{-s+u_2+1/2}),\\
F_{I_{w_3}^*f,h_2}^{(s,u_2)}(-,g_1) &\in \mathcal{B}(\omega_1^{-1}\mu_2^{-1}|\mbox{ }|_F^{-s-u_2+1/2},\mu_2|\mbox{ }|_F^{s+u_2-1/2})
\end{split}
\end{align*}
are meromorphic sections. By \cite[Appendix, Theorem]{F93} and Proposition A, we have
\begin{align*}
\begin{split}
\Psi(f^{(s)},W_1^{(u_1)},W_2^{(u_2)}) &= \gamma(2s-2u_2-1,\omega_1\nu_2^2,\psi)\gamma_{\rm RS}(s-u_2,{\rm As}\,\pi_1^{(u_1)} \otimes \nu_2,\psi,\xi)^{-1}\\
&\times\int_{\SL_2(F)\backslash \SL_2(E)}Z(MF_{f,h_2}^{(s,u_2)},\rho(g_1)W_{1,\xi}^{(u_1)})dg_1,\\
\Psi(I_{w_3}^*f^{(s)},W_1^{(u_1)},W_2^{(u_2)}) &= \gamma(-2s-2u_2+1,\omega_1^{-1}\mu_2^{-2},\psi)\gamma_{\rm RS}(1-s-u_2,{\rm As}\,\pi_1^{(u_1)} \otimes \omega_1^{-1}\mu_2^{-1},\psi,\xi)^{-1}\\
&\times\int_{\SL_2(F)\backslash \SL_2(E)}{Z}(MF_{I_{w_3}^*f,h_2}^{(s,u_2)},\rho(g_1)W_{1,\xi}^{(u_1)})dg_1\\
\end{split}
\end{align*}
for 
\begin{align*}
{\rm Re}(s)>2|{\rm Re}(u_1)|+|{\rm Re}(u_2)|\mbox{ and }{\rm Re}(u_2)>0,\\
{\rm Re}(1-s)>2|{\rm Re}(u_1)|+|{\rm Re}(u_2)|\mbox{ and }{\rm Re}(u_2)>0,
\end{align*}
respectively. Here $M$ denote the intertwining operator defined in \S \ref{SS:the functional equation}.
 Note that the local zeta integrals defining ${Z}(MF_{f,h_2}^{(s,u_2)},\rho(g_1)W_{1,\xi}^{(u_1)})$ and ${Z}(MF_{I_{w_3}^*f,h_2}^{(s,u_2)},\rho(g_1)W_{1,\xi}^{(u_1)})$ are absolutely converge for $${\rm Re}(1-s+u_2)>2|{\rm Re}(u_1)|\mbox{ and }{\rm Re}(s+u_2)>2|{\rm Re}(u_1)|,$$
respectively. 
Therefore, we conclude that
\begin{align}\label{E:1}
\begin{split}
\Psi(f^{(s)},W_1^{(u_1)},W_2^{(u_2)}) &=\gamma(2s-2u_2-1,\omega_1\nu_2^2,\psi)\gamma_{\rm RS}(s-u_2,{\rm As}\,\pi_1^{(u_1)} \otimes \nu_2,\psi,\xi)^{-1}\\
&\times\int_{\SL_2(F)\backslash \SL_2(E)}\left ( \int_{F^{\times}U(F) \backslash \GL_2(F)}MF_{f,h_2}^{(s,u_2)}(g,g_1)W_{1,\xi}^{(u_1)}(gg_1)dg \right )dg_1,\\
\Psi(I_{w_3}^*f^{(s)},W_1^{(u_1)},W_2^{(u_2)}) &=\gamma(-2s-2u_2+1,\omega_1^{-1}\mu_2^{-2},\psi)\gamma_{\rm RS}(1-s-u_2,{\rm As}\,\pi_1^{(u_1)} \otimes \omega_1^{-1}\mu_2^{-1},\psi,\xi)^{-1}\\
&\times\int_{\SL_2(F)\backslash \SL_2(E)}\left ( \int_{F^{\times}U(F) \backslash \GL_2(F)}MF_{I_{w_3}^*f,h_2}^{(s,u_2)}(g,g_1)W_{1,\xi}^{(u_1)}(gg_1)dg \right )dg_1
\end{split}
\end{align}
for 
\begin{align*}
{\rm Re}(u_2)>0\mbox{ and }2|{\rm Re}(u_1)|< {\rm Re}(s-u_2) <1-2 |{\rm Re}(u_1)|,\\
{\rm Re}(u_2)>0\mbox{ and }2|{\rm Re}(u_1)|< {\rm Re}(1-s-u_2) <1-2 |{\rm Re}(u_1)|,
\end{align*}
respectively. 

Define functions $\mathscr{F}_{f,h_2}^{(s,u_2)}$ and $\mathscr{F}_{I_{w_3}^*f,h_2}^{(s,u_2)}$ on $\GL_2^{\circ}(F)$ by 
\begin{align}\label{E:2}
\begin{split}
\mathscr{F}_{f,h_2}^{(s,u_2)}(g)&=MF_{f,h_2}^{(s,u_2)} (a(\nu(g)),a(\nu(g))^{-1}a(\xi)^{-1}ga(\xi)),\\
\mathscr{F}_{I_{w_3}^*f,h_2}^{(s,u_2)}(g)&=MF_{I_{w_3}^*f,h_2}^{(s,u_2)} (a(\nu(g)),a(\nu(g))^{-1}a(\xi)^{-1}ga(\xi)),
\end{split}
\end{align}
for $g \in \GL_2^{\circ}(F)$. Put 
\begin{align*}
\overline{\mathscr{F}}_{f,h_2}^{(s,u_2)}(g) &= \int_{F}{\mathscr{F}}_{f,h_2}^{(s,u_2)}(u(x)g)\psi(2x)dx,\\
\overline{\mathscr{F}}_{I_{w_3}^*f,h_2}^{(s,u_2)}(g) &= \int_{F}{\mathscr{F}}_{I_{w_3}^*f,h_2}^{(s,u_2)}(u(x)g)\psi(2x)dx,
\end{align*}
for $g \in \GL_2^{\circ}(F).$
By (\ref{E:1}), formally we have
\begin{align}\label{E:first step}
\begin{split}
\Psi(f^{(s)},W_1^{(u_1)},W_2^{(u_2)}) &=\gamma(2s-2u_2-1,\omega_1\nu_2^2,\psi)\gamma_{\rm RS}(s-u_2,{\rm As}\,\pi_1^{(u_1)} \otimes \nu_2,\psi,\xi)^{-1}\\
&\times\int_{F^{\times}U(E) \backslash \GL_2^{\circ}(F)}\overline{\mathscr F}_{f,h_2}^{(s,u_2)}(g)W_1^{(u_1)}(g)dg,\\
\Psi(I_{w_3}^*f^{(s)},W_1^{(u_1)},W_2^{(u_2)}) &=\gamma(-2s-2u_2+1,\omega_1^{-1}\mu_2^{-2},\psi)\gamma_{\rm RS}(1-s-u_2,{\rm As}\,\pi_1^{(u_1)} \otimes \omega_1^{-1}\mu_2^{-1},\psi,\xi)^{-1}\\
&\times\int_{F^{\times}U(E) \backslash \GL_2^{\circ}(F)}\overline{\mathscr F}_{I_{w_3}^*f,h_2}^{(s,u_2)}(g)W_1^{(u_1)}(g)dg.
\end{split}
\end{align}
In the following we shall prove that $\overline{\mathscr F}_{f,h_2}^{(s,u_2)}$ and $\overline{\mathscr F}_{I_{w_3}^*f,h_2}^{(s,u_2)}$ can be meromorphically continued to $(s,u_2)\in\C^2$ and satisfying functional equation (\ref{E:key identity 2}). Then we will prove that the integrals in (\ref{E:first step}) are both absolutely converge for $(s,u_1,u_2) \in \mathscr{D}$.

Fix a compact subset $\Omega \subseteq \GL_2^{\circ}(F)$ consisting of a set of representatives of the image of the map
\begin{align*}
F^{\times} \backslash E^{\times} \times K^{\circ} &\longrightarrow F^{\times} \backslash \GL_2^{\circ}(F)\\
(F^{\times}a,k) &\longmapsto F^{\times}\bp a & 0 \\ 0 & a^{-1}\ep k.
\end{align*}
Note that when $F=\R$, we can and will assume $\iota_2(\Omega) \subseteq K_2$.

Define functions ${G}_{f,h_2}^{(s,u_2)}$ and ${G}_{I_{w_3}^*f,h_2}^{(s,u_2)}$ on $\GSp_2(F)$ by 
\begin{align*}
G_{f,h_2}^{(s,u_2)}(g) &= \int_{F}\int_{\SL_2(F)} f^{(s)}  \left(u_-\left( \bp 0&0&0 \\ 0&0&x \\ 0&x&0\ep \right )\eta'\iota(1,g_2)\gamma(g)\right )h_2^{(u_2)}(w_1g_2a(\nu(g)))dg_2dx,\\
G_{I_{w_3}^*f,h_2}^{(s,u_2)}(g) &= \int_{F}\int_{\SL_2(F)} I_{w_3}^*f^{(s)}  \left(u_-\left( \bp 0&0&0 \\ 0&0&x \\ 0&x&0\ep \right )\eta'\iota(1,g_2)\gamma(g)\right )h_2^{(u_2)}(w_1g_2a(\nu(g)))dg_2dx
\end{align*}
Note that the integrals defining $G_{f,h_2}^{(s,u_2)}$ and ${G}_{I_{w_3}^*f,h_2}^{(s,u_2)}$ are absolutely converge for 
\begin{align*}
{\rm Re}(s)>0\mbox{, }{\rm Re}(s-u_2)>\frac{1}{2}\mbox{, and }{\rm Re}(s+u_2)>-1,\\
{\rm Re}(1-s)>0\mbox{, }{\rm Re}(1-s-u_2)>\frac{1}{2}\mbox{, and }{\rm Re}(1-s+u_2)>-1,
\end{align*}
respectively.
By the $K_3$-finiteness of $f^{(s)}$ and (\ref{E:identity for conjugation by eta special form}), we have the following properties:
\begin{itemize}
\item $G_{f,h_2}^{(s,u_2)}$ and ${G}_{I_{w_3}^*f,h_2}^{(s,u_2)}$ are right $K_2$-finite.
\item
For $g \in \GSp_2(F)$ and $p=  \bp a_1 & 0 &*&* \\ * & a_2 & * & * \\ 0 & 0 & \nu a_1^{-1} & * \\ 0 & 0 & 0 & \nu a_2^{-1}     \ep $, we have 
\begin{align*}
G_{f,h_2}^{(s,u_2)}(pg) &= \nu_2\omega^{-1}(\nu)|\nu|_F^{-s-u_2-1}\omega(a_1)|a_1|_F^{2s}\mu_2\nu_2^{-1}(a_2)|a_2|_F^{2u_2+2}G_{f,h_2}^{(s,u_2)}(g),\\
G_{I_{w_3}^*f,h_2}^{(s,u_2)}(pg) &= \nu_2(\nu)|\nu|_F^{-(1-s)-u_2-1}\omega^{-1}(a_1)|a_1|_F^{2(1-s)}\mu_2\nu_2^{-1}(a_2)|a_2|_F^{2u_2+2}G_{I_{w_3}^*f,h_2}^{(s,u_2)}(g)
\end{align*}
\end{itemize}
By (\ref{E:identity for conjugation by eta special form}), 
\begin{align*}
G_{f,h_2}^{(s,u_2)}(g) =& \int_{k \in K_1^1}h_2^{(u_2)}(w_1ka(\nu(g)))\int_{F^{\times}}\int_{F}\int_{F}\omega_1\nu_2^2(a)|a|_F^{2s-2u_2-1}\\
&f^{(s)}\left ( u_- \left ( \bp 0 & 0 & a-1 \\0 & z & x \\ a-1 & x & y \ep \right )\eta'\iota(1,k)\gamma(g)\right )dydxd^{\times}adk,\\
G_{I_{w_3}^*f,h_2}^{(s,u_2)}(g) =& \int_{k \in K_1^1}h_2^{(u_2)}(w_1ka(\nu(g)))\int_{F^{\times}}\int_{F}\int_{F}\omega_1^{-1}\mu_2^{-2}(a)|a|_F^{2(1-s)-2u_2-1}\\
&I_{w_3}^*f^{(s)}\left ( u_- \left ( \bp 0 & 0 & a-1 \\0 & z & x \\ a-1 & x & y \ep \right )\eta'\iota(1,k)\gamma(g)\right )dydxd^{\times}adk.
\end{align*}
Note that for $x,y,a \in F$,
\begin{align*}
u_- \left ( \bp 0 & 0 & a \\0 & 0 & x \\ a & x & y \ep \right ) = \kappa_{(3)}\left ( \bp 1 & 0 \\ y & 1\ep \right )w_{(3)}^{-1}\kappa_{(2)}\left ( \bp 1 & 0 \\ x & 1\ep \right )w_{(2)}^{-1}w_{(1)}^{-1}\kappa_{(1)}\left ( \bp 1 & -a \\ 0 & 1\ep \right )w_{(1)}w_{(2)}w_{(3)},
\end{align*}
and \begin{align*}
I_{w_{(2)}}I_{w_{(3)}}f^{(s)} \in \mathcal{B}(\omega |\mbox{ }|_F^{2s-4},\omega^{-1}|\mbox{ }|_F^{-2s-1/2})_0,\\
I_{w_{(2)}}I_{w_{(3)}}I_{w_3}^*f^{(s)} \in \mathcal{B}(\omega^{-1} |\mbox{ }|_F^{-2s-2},\omega |\mbox{ }|_F^{2s-5/2})_0
\end{align*}
are meromorphic sections. Therefore, by \cite[Lemma 5.1]{Ikeda1989}, we have
\begin{align*}
G_{f,h_2}^{(s,u_2)}(g) =& \gamma (2s-2u_2-1,\omega_1\nu_2^2,\psi)^{-1}\int_{K_1^1}h_2^{(u_2)}(w_1ka(\nu(g)) \\
&Z_{(1)}\left (2u_1+\frac{5}{2},\mu_2\nu_2^{-1},W_{w_{(1)}}I_{w_{(2)}}I_{w_{(3)}}f^{(s)}, w_{(1)}w_{(2)}w_{(3)}u_- \left ( \bp 0&0&-1 \\ 0&0&0 \\ -1&0&0 \ep \right) \eta' \iota(1,k)\gamma(g)\right )dk,\\
G_{I_{w_3}^*f,h_2}^{(s,u_2)}(g) =& \gamma (-2s-2u_2+1,\omega_1^{-1}\mu_2^{-2},\psi)^{-1}\int_{K_1^1}h_2^{(u_2)}(w_1ka(\nu(g)) \\
&Z_{(1)}\left (2u_1+\frac{5}{2},\mu_2\nu_2^{-1},W_{w_{(1)}}I_{w_{(2)}}I_{w_{(3)}}I_{w_3}^*f^{(s)}, w_{(1)}w_{(2)}w_{(3)}u_- \left ( \bp 0&0&-1 \\ 0&0&0 \\ -1&0&0 \ep \right) \eta' \iota(1,k)\gamma(g)\right )dk.
\end{align*}
In particular, both $G_{f,h_2}^{(s,u_2)}$ and $G_{I_{w_3}^*f,h_2}^{(s,u_2)}$ can be meromorphically continued to $(s,u_2)\in\C^2.$
\\By an argument similar to \cite[page 209-212]{Ikeda1989}, we have
\begin{align}
\begin{split}
\mathscr{F}_{f,h_2}^{(s,u_2)}(g) &= \omega_1(-1)|2\xi^2|_FG_{f,h_2}^{(s,u_2)}\left ( \bp 0&-1&0&0 \\ 0&-1&-1&0 \\ -1&0&0&-1 \\ 1&0&0&0\ep \iota_2 (a(\xi)w_1a(\xi)^{-1}g) \right ),\\
\mathscr{F}_{I_{w_3}^*f,h_2}^{(s,u_2)}(g) &=\omega_1(-1) |2\xi^2|_FG_{I_{w_3}^*f,h_2}^{(s,u_2)}\left ( \bp 0&-1&0&0 \\ 0&-1&-1&0 \\ -1&0&0&-1 \\ 1&0&0&0\ep \iota_2 (a(\xi)w_1a(\xi)^{-1}g) \right ).
\end{split}
\end{align}
By \cite[(5.2.11) and (5.2.12)]{Ikeda1989} and the properties of $G_{f,h_2}^{(s,u_2)}$ and $G_{I_{w_3}^*f,h_2}^{(s,u_2)}$, we deduce that
\begin{align*}
\begin{split}
\left \vert {\mathscr F}_{f,h_2}^{(s,u_2)}(a(\nu)u(x)g) \right \vert &\ll_{s,u_2}  |\nu|_F^{{\rm Re}(u_2-s)+1}N( 2\xi^2,2^{-1}\xi^{-2},\xi^{-2}x)^{2{\rm Re}(s-u_2)-2}N(1,2x)^{-2{\rm Re}(s)},\\
\left \vert {\mathscr F}_{I_{w_3}^*f,h_2}^{(s,u_2)}(a(\nu)u(x)g) \right \vert &\ll_{s,u_2}  |\nu|_F^{{\rm Re}(u_2-1+s)+1}N( 2\xi^2,2^{-1}\xi^{-2},\xi^{-2}x)^{2{\rm Re}(1-s-u_2)-2}N(1,2x)^{-2{\rm Re}(1-s)}
\end{split}
\end{align*}
for $g \in \Omega$, $\nu \in F^{\times}$, and $x \in F$. Therefore, the integrals
\begin{align*}
\int_{F^{\times}U^{\circ}(F) \backslash \GL_2^{\circ}(F)}{\mathscr F}_{f,h_2}^{(s,u_2)}(g)W_1^{(u_1)}(g)dg\mbox{ and }\int_{F^{\times}U^{\circ}(F) \backslash \GL_2^{\circ}(F)}{\mathscr F}_{I_{w_3}^*f,h_2}^{(s,u_2)}(g)W_1^{(u_1)}(g)dg,
\end{align*}
are absolutely converge for 
\begin{align*}
{\rm Re}(s)>\frac{1}{2}\mbox{, }{\rm Re}(u_2)>2|{\rm Re}(u_1)|+{\rm Re}(s)-1,\\
{\rm Re}(1-s)>\frac{1}{2}\mbox{, }{\rm Re}(u_2)>2|{\rm Re}(u_1)|+{\rm Re}(1-s)-1,
\end{align*}
respectively.
\\Define functions $\overline{G}_{f,h_2}^{(s,u_2)}$ and $\overline{G}_{I_{w_3}^*f,h_2}^{(s,u_2)}$ on $\GSp_2(F)$ by
\begin{align*}
\begin{split}
\overline{G}_{f,h_2}^{(s,u_2)}(g) &= \int_{F}{G}_{f,h_2}^{(s,u_2)} \left ( \bp  0&0&1&0\\0&1&0&0\\-1&0&0&0\\0&0&0&1\ep u\left( \bp x &0\\0&0 \ep \right )g\right )\psi(-x)dx,\\
\overline{G}_{I_{w_3}^*f,h_2}^{(s,u_2)}(g) &= \int_{F}{G}_{I_{w_3}^*f,h_2}^{(s,u_2)} \left ( \bp  0&0&1&0\\0&1&0&0\\-1&0&0&0\\0&0&0&1\ep u\left( \bp x &0\\0&0 \ep \right )g\right )\psi(-x)dx.
\end{split}
\end{align*}
By the properties of $G_{f,h_2}^{(s,u_2)}$ and $G_{I_{w_3}^*f,h_2}^{(s,u_2)}$, we have the following properties.
\begin{itemize}
\item $\overline{G}_{f,h_2}^{(s,u_2)}$ is right $K_2$-finite. 
\item For $k_2 \in K_2$, as functions in $g=\bp a & b \\ c&d \ep \in \GL_2(F)$, we have
\begin{align*}
\overline{G}_{f,h_2}^{(s,u_2)}\left( \bp a&0&b&0\\ 0&\det(g)&0&0\\c&0&d&0\\0&0&0&1    \ep k_2\right)&\in \mathscr{W}(\mu_2|\mbox{ }|_F^{s+u_2+1/2},\mu_2\omega^{-1}|\mbox{ }|_F^{-s+u_2+3/2})_0,\\
\overline{G}_{I_{w_3}^*f,h_2}^{(s,u_2)}\left( \bp a&0&b&0\\ 0&\det(g)&0&0\\c&0&d&0\\0&0&0&1    \ep k_2\right)&\in \mathscr{W}(\mu_2\omega^{-1}|\mbox{ }|_F^{-s+u_2+3/2},\mu_2|\mbox{ }|_F^{s+u_2+1/2})_0.\\
\end{align*}
\end{itemize}
By (\ref{E:identity for conjugation by eta special form}),
\begin{align*}
\overline{G}_{f,h_2}^{(s,u_2)}(g) =& \int_{k \in K_1^1}h_2^{(u_2)}(w_1ka(\nu(g)))\int_{F^{\times}}\int_{F}\int_{F}\int_{F}\psi(z)\omega_1\nu_2^2(a)|a|_F^{2s-2u_2-1}\\
&f^{(s)}\left ( u_- \left ( \bp 0 & 0 & a-1 \\0 & z & x \\ a-1 & x & y \ep \right )\eta'\iota(1,k)w_{(1)}\gamma(g)\right )dzdydxd^{\times}adk,\\
\overline{G}_{I_{w_3}^*f,h_2}^{(s,u_2)}(g) =& \int_{k \in K_1^1}h_2^{(u_2)}(w_1ka(\nu(g)))\int_{F^{\times}}\int_{F}\int_{F}\int_{F}\psi(z)\omega_1^{-1}\mu_2^{-2}(a)|a|_F^{2(1-s)-2u_2-1}\\
&I_{w_3}^*f^{(s)}\left ( u_- \left ( \bp 0 & 0 & a-1 \\0 & z & x \\ a-1 & x & y \ep \right )\eta'\iota(1,k)w_{(1)}\gamma(g)\right )dzdydxd^{\times}adk.
\end{align*}
Note that for $x,y,z,a\in F$, 
\begin{align*}
u_- \left ( \bp 0 & 0 & a \\0 & z & x \\ a & x & y \ep \right ) &= \kappa_{(3)}\left ( \bp 1 & 0 \\ y & 1\ep \right )w_{(3)}^{-1}\kappa_{(2)}\left ( \bp 1 & 0 \\ x & 1\ep \right )w_{(2)}^{-1} \kappa_{(3)}\left ( \bp 1 & 0 \\ z & 1\ep \right )  \\
&\times w_{(1)}^{-1}\kappa_{(1)}\left ( u(-a) \right )w_{(1)}w_{(2)}w_{(3)}.
\end{align*}
Therefore, by \cite[Lemma 5.1]{Ikeda1989}, we have
\begin{align*}
\overline{G}_{f,h_2}^{(s,u_2)}(g) =& \gamma (2s-2u_2-1,\omega_1\nu_2^2,\psi)^{-1}\int_{K_1^1}h_2^{(u_2)}(w_1ka(\nu(g)) \\
&Z_{(1)}\left (2u_2+\frac{5}{2},\mu_2\nu_2^{-1},W_{w_{(1)}}W_{w_{(3)}}I_{w_{(2)}}I_{w_{(3)}}f^{(s)}, \begin{array}{lll} & & \\ & & \\ & & \end{array} \right .\\
&\left . \mbox{ }\mbox{ }\mbox{ }\mbox{ }\mbox{ }\mbox{ }w_{(3)}w_{(1)}w_{(2)}w_{(3)}u_- \left ( \bp 0&0&-1 \\ 0&0&0 \\ -1&0&0 \ep \right)  \eta' \iota(1,k)w_{(1)}\gamma(g)\right )dk,\\
\overline{G}_{I_{w_3}^*f,h_2}^{(s,u_2)}(g) =& \gamma (-2s-2u_2+1,\omega_1^{-1}\mu_2^{-2},\psi)^{-1}\int_{K_1^1}h_2^{(u_2)}(w_1ka(\nu(g)) \\
&Z_{(1)}\left (2u_2+\frac{5}{2},\mu_2\nu_2^{-1},W_{w_{(1)}}W_{w_{(3)}}I_{w_{(2)}}I_{w_{(3)}}I_{w_3}^*f^{(s)}, \begin{array}{lll} & & \\ & & \\ & & \end{array} \right .\\
&\left . \mbox{ }\mbox{ }\mbox{ }\mbox{ }\mbox{ }\mbox{ }w_{(3)}w_{(1)}w_{(2)}w_{(3)}u_- \left ( \bp 0&0&-1 \\ 0&0&0 \\ -1&0&0 \ep \right)  \eta' \iota(1,k)w_{(1)}\gamma(g)\right )dk.
\end{align*}
In particular, both $\overline{G}_{f,h_2}^{(s,u_2)}$ and $\overline{G}_{I_{w_3}^*f,h_2}^{(s,u_2)}$ can be meromorphically continued to $(s,u_2) \in \C^2.$ By (\ref{E:composite of intertwining operators}) and proceeding as in \cite[page 215]{Ikeda1989}, we have
$$W_{w_{(1)}}W_{w_{(3)}}I_{w_{(2)}}I_{w_{(3)}}f^{(s)}=\omega(-1)W_{w_{(1)}}W_{w_{(3)}}I_{w_{(2)}}I_{w_{(3)}}I_{w_3}^*f^{(s)}.$$
If follows that
\begin{align}\label{E:key identity}
\overline{G}_{I_{w_3}^*f,h_2}^{(s,u_2)}=\omega(-1)\gamma (2s-2u_2-1,\omega_1\nu_2^2,\psi)\gamma (-2s-2u_2+1,\omega_1^{-1}\mu_2^{-2},\psi)^{-1}\overline{G}_{f,h_2}^{(s,u_2)}.
\end{align}
\\Note that for $x \in F$, we have
\begin{align*}
&\bp 0&-1&0&0 \\ 0&-1&-1&0 \\ -1&0&0&-1 \\ 1&0&0&0\ep \iota_2 (a(\xi)w_1a(\xi)^{-1}u(x)) \\
&= m \left (\bp 1&0 \\ 1&1 \ep ,1\right )u\left ( \bp 0&0 \\ 0 &2\xi^{-2}x \ep \right)m \left (\bp 2\xi^2 &0 \\ 0&-2^{-1}\xi^{-2} \ep ,-1\right )\\
&\times \bp  0&0&1&0\\0&1&0&0\\-1&0&0&0\\0&0&0&1\ep u\left( \bp -2x &0\\0&0 \ep \right )m\left (\bp 1&0 \\ 0&-1 \ep ,-1\right ).
\end{align*}
Therefore, for $g \in \GL_2^{\circ}(F)$
\begin{align}\label{E:identity for sections 2}
\begin{split}
\overline{\mathscr{F}}_{f,h_2}^{(s,u_2)}(g) &= |2|_F^{-1}|2\xi^2|_F^{2s-2u_2-1}\nu_2(-1)\omega\mu_2^{-1}\nu_2(2\xi^2)\overline{G}_{f,h_2}^{(s,u_2)}\left( m\left( \bp 1 & 0 \\ 0 & -1 \ep ,-1\right )\iota_2(g) \right),\\
\overline{\mathscr{F}}_{I_{w_3}^*f,h_2}^{(s,u_2)}(g) &= |2|_F^{-1}|2\xi^2|_F^{2(1-s)-2u_2-1}\omega_1\mu_2(-1)\omega^{-1}\mu_2^{-1}\nu_2(2\xi^2)\overline{G}_{I_{w_3}^*f,h_2}^{(s,u_2)}\left( m\left( \bp 1 & 0 \\ 0 & -1 \ep ,-1\right )\iota_2(g) \right).
\end{split}
\end{align}
By (\ref{E:key identity}) and (\ref{E:identity for sections 2}), we have
\begin{align}\label{E:key identity 2}
\overline{\mathscr{F}}_{I_{w_3}^*f,h_2}^{(s,u_2)} = \omega(4\xi^4)^{-1}|4\xi^4|_F^{-2s+1}\gamma (2s-2u_2-1,\omega_1\nu_2^2,\psi)\gamma (-2s-2u_2+1,\omega_1^{-1}\mu_2^{-2},\psi)^{-1}\overline{\mathscr{F}}_{f,h_2}^{(s,u_2)}.
\end{align}
By the properties of $\overline{G}_{f,h_2}^{(s,u_2)}$ and $\overline{G}_{I_{w_3}^*f,h_2}^{(s,u_2)}$, and (\ref{E:identity for sections 2}), for each $\epsilon>0$ there exist $\varphi \in S(F)$ depending on $\epsilon$, $s$ and $u_2$ such that
\begin{align*}
\begin{split}
\overline{\mathscr{F}}_{f,h_2}^{(s,u_2)}(a(\nu)g)&\ll_{s,u_2}|\nu|_F^{u_2+3/2-|{\rm Re}(s)-1/2|-\epsilon}\varphi(\nu),\\
\overline{\mathscr{F}}_{I_{w_3}^*f,h_2}^{(s,u_2)}(a(\nu)g)&\ll_{s,u_2}|\nu|_F^{u_2+3/2-|{\rm Re}(s)-1/2|-\epsilon}\varphi(\nu)
\end{split}
\end{align*}
for $g \in \Omega$ and $\nu \in F^{\times}$. Therefore, the integrals
\begin{align*}
\int_{F^{\times}U^{}(E) \backslash \GL_2^{\circ}(F)}\overline{{\mathscr F}}_{f,h_2}^{(s,u_2)}(g)W_1^{(u_1)}(g)dg
,\quad
\int_{F^{\times}U^{}(E) \backslash \GL_2^{\circ}(F)}\overline{{\mathscr F}}_{I_{w_3}^*f,h_2}^{(s,u_2)}(g)W_1^{(u_1)}(g)dg
\end{align*}
are both absolutely converge for $(s,u_1,u_2)\in \mathscr{D}$. It follows that (\ref{E:first step}) holds for $(s,u_1,u_2)$ in the nonempty open set $D \cap D^{\vee} \cap \mathscr{D}$. By \cite[Appendix, Theorem]{F93} and Theorem \ref{T:main theorem}-(1), $$L_{\rm RS}(s+u_2,{\rm As}\,\pi_1^{(u_1)}\otimes \mu_1)L_{\rm RS}(s-u_2,{\rm As}\,\pi_1^{(u_1)}\otimes \nu_2)$$ and $$L_{\rm RS}(1-s-u_2,{\rm As}\,\pi_1^{(u_1)}\otimes \omega_1^{-1}\mu_2^{-1})L_{\rm RS}(1-s+u_2,{\rm As}\,\pi_1^{(u_1)}\otimes \omega_1^{-1}\nu_2^{-1})$$ are non-zero and has no poles in $D$ and $D^{\vee}$, respectively. Therefore, by the uniqueness of the analytic continuation, (\ref{E:functional equation for twisted Asai}), (\ref{E:first step}), and (\ref{E:key identity 2}), we conclude that both
\begin{align*}
\frac{\Psi(I_{w_3}^*f^{(s)},W_1^{(u_1)},W_2^{(u_2)})}{L_{\rm RS}(1-s-u_2,{\rm As}\,\pi_1^{(u_1)}\otimes \omega_1^{-1}\mu_2^{-1})L_{\rm RS}(1-s+u_2,{\rm As}\,\pi_1^{(u_1)}\otimes \omega_1^{-1}\nu_2^{-1})}
\end{align*}
and 
\begin{align*}
\frac{\Psi(f^{(s)},W_1^{(u_1)},W_2^{(u_2)})}{L_{\rm RS}(s+u_2,{\rm As}\,\pi_1^{(u_1)}\otimes \mu_2)L_{\rm RS}(s-u_2,{\rm As}\,\pi_1^{(u_1)}\otimes \nu_2)}
\end{align*}
can be analytically continued to $D \cup D^{\vee}$, and for $(s,u_1,u_2) \in D  \cup D^{\vee}$
\begin{align}\label{E:step two}
\begin{split}
&\frac{\Psi(I_{w_3}^*f^{(s)},W_1^{(u_1)},W_2^{(u_2)})}{L_{\rm RS}(1-s-u_2,{\rm As}\,\pi_1^{(u_1)}\otimes \omega_1^{-1}\mu_2^{-1})L_{\rm RS}(1-s+u_2,{\rm As}\,\pi_1^{(u_1)}\otimes \omega_1^{-1}\nu_2^{-1})} \\&= \omega(4\xi^4)^{-1}|4\xi^4|_F^{-2s+1}\varepsilon(s+u_2,{\rm As}\,\pi_1^{(u_1)}\otimes \mu_2 ,\psi,\xi ) \varepsilon(s-u_2,{\rm As}\,\pi_1^{(u_1)}\otimes \nu_2 ,\psi,\xi )\\
&\times \frac{\Psi(f^{(s)},W_1^{(u_1)},W_2^{(u_2)})}{L_{\rm RS}(s+u_2,{\rm As}\,\pi_1^{(u_1)}\otimes \mu_2)L_{\rm RS}(s-u_2,{\rm As}\,\pi_1^{(u_1)}\otimes \nu_2)}.
\end{split}
\end{align}
Note that $\varepsilon_{\rm RS}(s+u_2,{\rm As}\,\pi_1^{(u_1)}\otimes \mu_2 ,\psi,\xi ) \varepsilon_{\rm RS}(s-u_2,{\rm As}\,\pi_1^{(u_1)}\otimes \nu_2 ,\psi,\xi )$ is an entire function without zeros. Since $D \cup D^{\vee}$ is a connected tube domain with convex hull equal to $\C^3$, both sides of (\ref{E:step two}) can be analytically continued to $\C^3$ and satisfying the functional equation (\ref{E:step two}). Substituting $u_1=v_1$ and $u_2=v_2$, we have
\begin{align*}
\gamma_{\rm PSR}(s,{\rm As}\,\pi_1 \otimes \pi_2,\psi,\xi) &= \omega(4\xi^4)^{-1}|4\xi^4|_F^{-2s+1}\gamma_{\rm RS}(s+v_2,{\rm As}\,\pi_1\otimes \mu_2 ,\psi,\xi ) \gamma_{\rm RS}(s-v_2,{\rm As}\,\pi_1\otimes \nu_2 ,\psi,\xi ).
\end{align*}
The second assertion follows from Corollary \ref{equality of epsilon factors} and Theorem \ref{T:main theorem}. This completes the proof.
\end{proof}

\appendix
\section*{Appendix}
Let $F$ be a local field of characteristic zero and $E/F$ be a quadratic field extension. Fix an element $\xi \in E^{\times}$ such that ${\rm tr}_{E/F}(\xi)=0$. Let $\psi$ be a non-trivial additive character of $F$.

Let $\mu$, $\nu$ be two characters of $E^{\x}$.
Let $\pi={\rm Ind}_{B(E)}^{{\rm GL}_2(E)}(\mu,\nu)$ be an infinite-dimensional irreducible Harish-Chandra
representation of ${\rm GL}_2(\C)$ if $F=\R$, and a smooth admissible representation of $\GL_2(E)$ if $F$ is non-archimedean. Let $W\in\cW(\pi,\psi_\xi)$ and $\Phi\in\cS(F^2,\psi)$. Here $\cS(F^2,\psi) = \cS(F^2)$ if $F$ is non-archimedean. It is easy to verify that
\[
Z(s,W,\Phi)
=
\int_{\R^{\x}\U(\R)\backslash\GL_2(\R)}
W(g)f^{(s)}_\Phi(g)dg,
\]
where $f^{(s)}_\Phi$ is the Godement section associated to $\Phi\in\cS(F^2,\psi)$ 
defined by the integral 
$$f^{(s)}_\Phi(g)
=
|{\rm det}(g)|^s\int_{F^{\x}}\Phi((0,t)g)\omega_0(t)|t|_F^{2s}d^{\x}t.$$
 Recall that 
\[
f^{(s)}_\Phi
\in
I(s,\omega_0)_0
\quad\text{with}\quad
I(s,\omega)
=
\cB\left(|\mbox{ }|^{s-1/2},\omega_0^{-1}|\mbox{ }|^{1/2-s}\right).
\]
Here $\omega_0=\mu\nu|_{\R^{\x}}$. 

In this appendix, we show that the zeta integrals have meromorphic continuations to the whole complex
plane, and satisfy the functional equation even when we replace $f^{(s)}_\Phi$ by a holomorphic section 
$f^{(s)}$ of $I(s,\omega_0)$.   By a holomorphic section of $I(s,\omega_0)$ we mean a function 
$f^{(s)}(g):\GL_2(F)\x\C\to\C$ which satisfies following two conditions:
\begin{itemize}
\item
for each $s\in\C$, $f^{(s)}(g)\in I(s,\omega_0)$.
\item
for each $g\in\GL_2(F)$, $f^{(s)}(g)$ is a holomorphic function in $s$.
\end{itemize}
Notice that we do not require $f^{(s)}$ to be right ${\rm O}_2(\R)$-finite when $F=\R$. 
Let $W\in\cW(\pi,\psi_\xi)$ and $f^{(s)}$ be a holomorphic section of $I(s,\omega_0)$.
Define
\[
Z(f^{(s)}, W)
=
\int_{F^{\x}\U(F)\backslash\GL_2(F)}
W(g)f^{(s)}(g)dg.
\]
Analogous to \subsecref{SS:the functional equation}, we define the intertwining operator 
\[
M^*:\cB\left(|\mbox{ }|^{s-1/2},\omega_0^{-1}|\mbox{ }|^{1/2-s}\right)\longto
\cB\left(\omega_0^{-1}|\mbox{ }|^{1/2-s},|\mbox{ }|^{s-1/2}\right)
\] 
by
\[
Mf^{(s)}(g)
:=
\int_F
f^{(s)}\left(
\begin{pmatrix}
0&1\\-1&0
\end{pmatrix}
\pMX 1x01 g
\right)dx
\quad\text{and}\quad
M^*:=
\omega_0(-1)\gamma\left(2s-1,\omega_0,\psi\right)M.
\]
The integral converges absolutely for ${\rm Re}(s)\gg 0$ and admits a 
meromorphic continuation to whole complex plane. Moreover, $M^*f^{(s)}$ is a holomorphic section.

The purpose of this appendix is to prove following results, which are used in \secref{S:4}. 

\begin{propa}\label{P:A}
Write
$$\mu = \chi_1 |\mbox{ }|_E^{\lambda_1},\quad \nu = \chi_2 |\mbox{ }|_E^{\lambda_2}$$
for some unitary characters $\chi_1$ and $\chi_2$ of $E^{\times}$ and some $\lambda_1, \lambda_2 \in \C$. 

Let $W \in \cW(\pi,\psi_\xi)$ and $f^{(s)}$ be a holomorphic section of $I(s,\omega_0)$. The integral $Z(f^{(s)},W)$ converges absolutely when 
${\rm Re}(s)>2\,{\rm max}\stt{-{\rm Re}(\lambda_1),-{\rm Re}(\lambda_2)}$ and has a 
meromorphic continuation to the whole complex plane. Moreover, it satisfies the functional equation
\[
Z(M^*f^{(s)},W)
=
\gamma_{{\rm RS}}\left(s,{\rm As}\,\pi,\psi,\xi\right)
Z(f^{(s)},W).
\]
\end{propa}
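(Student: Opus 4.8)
\emph{Overall strategy.} The plan is to bootstrap from the case of Godement sections, for which all three assertions are already available. For $\Phi\in\cS(F^2,\psi)$ the Godement section $f^{(s)}_\Phi$ is a holomorphic section of $I(s,\omega_0)$, and (as recorded at the start of this appendix) $Z(f^{(s)}_\Phi,W)=Z(s,W,\Phi)$, while \eqref{E:intertwining operator for induced representations} gives $M^*f^{(s)}_\Phi=\t f^{(1-s)}_{\wh\Phi}$, so that $Z(M^*f^{(s)}_\Phi,W)=Z(1-s,W\ot\omega^{-1},\wh\Phi)$. Hence for $f^{(s)}=f^{(s)}_\Phi$ the proposition is precisely \thmref{T:main theorem} when $F=\R$, and the Rankin--Selberg theory of \eqref{zeta integral}--\eqref{FEasai} when $F$ is non-archimedean. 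It then remains to promote the statements from Godement sections to arbitrary holomorphic sections of $I(s,\omega_0)$; this is only a genuine issue when $F=\R$, since for $F$ non-archimedean $I(s,\omega_0)=I(s,\omega_0)_0$ is generated over $\cpxnum[q^s,q^{-s}]$ by the $f^{(s)}_\Phi$, so there the passage is a formal manipulation with Laurent polynomials in $q^{-s}$.

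\emph{Convergence and meromorphic continuation.} I would unfold the $\GL_2$-integral by the Iwasawa decomposition $g=u\,\pMX a001 z\,k$ ($z$ central, $k\in K$): using $f^{(s)}\!\left(\pMX a001 k\right)=|a|_F^{\,s}f^{(s)}(k)$ one gets, up to normalization of Haar measures,
\[
Z(f^{(s)},W)=\int_{K}\zeta(s,\rho(k)W,1)\,f^{(s)}(k)\,dk,\qquad \zeta(s,W',1)=\int_{F^\times}W'\!\left(\pMX a001\right)|a|_F^{\,s-1}\,d^\times a,
\]
the inner Tate-type integral being the one of \lmref{L:integration formula for Tate C x R} (resp.\ its elementary non-archimedean counterpart). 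Since $f^{(s)}|_K$ is bounded and $\zeta(s,\rho(k)W,1)$ converges absolutely for ${\rm Re}(s)>2\max\{-{\rm Re}(\lambda_1),-{\rm Re}(\lambda_2)\}$ uniformly in $k\in K$ — here for $F=\R$ I would invoke the Whittaker estimates \cite[Lemma 3.1, Lemma 3.5]{Jac09} together with the moderate growth of $\pi$ — absolute convergence of $Z(f^{(s)},W)$ in the same range follows. Moreover $\zeta(s,\rho(k)W,1)$ continues meromorphically in $s$ with poles inside the (locally finite, $k$-independent) pole set of $L_{\rm Gal}(s,\As\pi)$, uniformly in $k$; since $K$ is compact and $k\mapsto f^{(s)}(k)$ is continuous and entire in $s$, integration over $K$ preserves meromorphy, so $Z(f^{(s)},W)$ extends meromorphically with $L_{\rm Gal}(s,\As\pi)^{-1}Z(f^{(s)},W)$ holomorphic (in particular $Z(f^{(s)},W)\in\cL(\As\pi)$ when $F=\R$). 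The same unfolding applied to the holomorphic section $M^*f^{(s)}$ of $\cB\big(\omega_0^{-1}|\cdot|^{1/2-s},|\cdot|^{s-1/2}\big)$ yields $Z(M^*f^{(s)},W)=\int_K\zeta(1-s,\rho(k)W,\omega_0^{-1})\,(M^*f^{(s)})(k)\,dk$ and hence its meromorphic continuation.

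\emph{Functional equation.} For a fixed $s_0\in\cpxnum$ outside the poles above and the countably many exceptional values below, the two formulas just displayed show that $Z(f^{(s_0)},W)$ and $Z(M^*f^{(s_0)},W)$ depend only on the value $f^{(s_0)}\in I(s_0,\omega_0)$ and are jointly continuous bilinear forms of $(W,f^{(s_0)})$ on $\cW(\pi,\psi_\xi)\times I(s_0,\omega_0)$: $\rho(k)W$ depends continuously on $W$ uniformly over the compact $K$, and $f^{(s_0)}(k)$, $(M^*f^{(s_0)})(k)$ depend continuously on $f^{(s_0)}$ since $M^*$ is a meromorphic family of continuous operators. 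For $s_0$ away from countably many values the Godement sections $f^{(s_0)}_\Phi$ ($\Phi\in\cS(F^2,\psi)$) span $I(s_0,\omega_0)_0$ (the surjectivity of $\Phi\mapsto f^{(s)}_\Phi$ recorded in \subsecref{SS:the functional equation}, cf.\ \cite[\S4]{GJ79}), which is dense in $I(s_0,\omega_0)$; and on these, by \thmref{T:main theorem}(2) (resp.\ \eqref{FEasai}) and the identifications of the first paragraph, $Z(M^*f^{(s_0)}_\Phi,W)=\gamma_{\rm RS}(s_0,\As\pi,\psi,\xi)\,Z(f^{(s_0)}_\Phi,W)$ for every $W\in\cW(\pi,\psi_\xi)$. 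By joint continuity and density this extends to all $f^{(s_0)}\in I(s_0,\omega_0)$, so the functional equation holds for every holomorphic section at $s=s_0$; as both sides are meromorphic in $s$, it holds for all $s$. (For $F$ non-archimedean I would instead use that the two sides are $\cpxnum[q^s,q^{-s}]$-linear in the section and that Godement sections generate $I(s,\omega_0)$ over $\cpxnum[q^s,q^{-s}]$, so density and continuity are unnecessary.)

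\emph{Expected main obstacle.} Almost everything above is formal; the one point requiring real work is the archimedean convergence/continuity estimate. Because Proposition A is stated for the full Casselman--Wallach space $\cW(\pi,\psi_\xi)$ and not merely its $K$-finite part, one cannot quote the $K$-finite computations of \secref{S3} verbatim but must run them through \cite[Lemmas 3.1 and 3.5]{Jac09} and the moderate-growth bound for $\pi$ to obtain the \emph{joint} continuity, uniform in $k\in K$, of the bilinear forms $Z(f^{(s)},W)$ and $Z(M^*f^{(s)},W)$. Once that uniformity is in hand, the meromorphic continuation is a routine integration over the compact $K$ and the functional equation propagates from Godement sections by density, so I anticipate no further difficulties.
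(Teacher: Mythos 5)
Your proposal is correct in substance and follows the paper's skeleton up to the last step: the same Iwasawa unfolding $Z(f^{(s)},W)=\int_{K}\zeta(s,\rho(k)W,1)\,f^{(s)}(k)\,dk$, compactness of $K$ for convergence and continuation, and a quick dismissal of the non-archimedean case. Where you genuinely diverge is the functional equation. The paper regards, for each fixed good $s$, both $(W,f^{(s)})\mapsto Z(f^{(s)},W)$ and $(W,f^{(s)})\mapsto Z(M^*f^{(s)},W)$ as continuous $\GL_2(\R)$-invariant functionals on $\cB(\mu,\nu)\ot I(s,\omega_0)$ and invokes the uniqueness theorem of Loke \cite[Theorem 1.3]{Lok01} (together with density of the $K$-finite vectors) to conclude proportionality by some meromorphic $\gamma(s)$, which is then pinned down to $\gamma_{\rm RS}$ by evaluating on the single section $L(2s,\omega_0)^{-1}f^{(s)}_\Phi$ with $Z(s,W,\Phi)\neq 0$. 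You instead extend the functional equation, already known on Godement sections by \thmref{T:main theorem}, to all of $I(s_0,\omega_0)$ by bilinearity, continuity and density of the span of the $f^{(s_0)}_\Phi$ (surjectivity onto $I(s_0,\omega_0)_0$ plus density of $K$-finite vectors); this buys you independence from Loke's multiplicity-one input, at the cost of letting the continuity-and-density bookkeeping carry the whole weight — note that both routes tacitly use that the continued $Z(M^*f^{(s)},W)$ at a fixed good $s_0$ is a functional of the value $f^{(s_0)}$ alone, and that no $\GL_2(\R)$-invariance is needed in your version. Two small repairs: your non-archimedean shortcut, that Godement sections generate $I(s,\omega_0)$ over $\C[q^s,q^{-s}]$, is not literally true for arbitrary holomorphic sections (these need not be Laurent-polynomial valued); argue pointwise in $s_0$ as in your archimedean step, or note as the paper does that such sections are right $\GL_2(\mathcal{O}_F)$-finite. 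Also, for the inner Tate-type integral with $W$ in the full Whittaker model the paper does not rerun the estimates of \cite{Jac09}; it averages $\Psi$ over the circle to produce $\Psi'\in\cS(\R^2)$ and reduces $\zeta(s,W_\Psi,\chi)$ to a two-variable Tate integral handled by \cite[Lemma 5.15.1]{JL70} — your route via \cite[Lemma 3.1]{Jac09} and moderate growth yields the same convergence and the uniformity in $k$ you need, so either works.
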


\begin{proof}
When $F$ is non-archimedean, the assertions are easy to prove. Since any holomorphic section of $I(s,\omega_0)$ is right $\GL_2(\mathcal{O}_F)$-finite.

Assume $F=\R$. Let $\chi$ be a character of $\R^{\x}$. For $W\in\cW(\pi,\psi_\xi)$, let $\zeta(s,W,\chi)$ be the integral
defined by the equation \eqref{E:Tate integral for C x R}. We first show that the second assertion of
\lmref{L:integration formula for Tate C x R} holds for $W\in\cW(\pi,\psi_\xi)$.
This can be argued as 
follows: Certainly we can assume $W=W_\Psi$ for some $\Psi\in\cS(\C^2)$. Then by equation 
\eqref{E:integral representation of Whittaker function} and by changing the variables as in the proof 
of \lmref{L:integration formula for Tate C x R}, we arrive
\[
\zeta(s,W_\Psi,\chi)
=
2\int_{\R^{\x}}\int_{\R^{\x}_+}
\Psi'(y,t)\mu\chi(y)|y|^s\nu\chi(t)t^sd^{\x}yd^{\x}t,
\]
where we define, for $\Psi\in\cS(\C^2)$, $y,t\in\R$, an element $\Psi'\in\cS(\R^2)$ by
\[
\Psi'(y,t)
=
\int_0^{2\pi}\Psi(ye^{i\theta},te^{-i\theta})\mu\nu^{-1}(e^{i\theta})d\theta.
\]
One check easily that
\[
\Psi'(y,-t)
=
\mu\nu(-1)\Psi'(-y,t).
\]
It follows that 
\[
\zeta(s,W_\Psi,\chi)
=
\int_{\R^{\x}}\int_{\R^{\x}}
\Psi'(y,t)\mu\chi(y)|y|^s\nu\chi(t)|t|^sd^{\x}yd^{\x}t.
\]
Our assertion now follows immediately from \cite[Lemma 5.15.1]{JL70}. 
This proves the claim.

We show the integral $Z(f^{(s)},W)$ converges absolutely when 
\[
{\rm Re}(s)>2\,{\rm max}\stt{-{\rm Re}(\lambda_1),-{\rm Re}(\lambda_2)}
\]
and has a meromorphic continuation to the whole complex plane. Indeed, by the Iwasawa decomposition,
we have
\[
Z(f^{(s)},W)
=
\int_{{\rm SO}_2(\R)}
f^{(s)}(k)\zeta(s,\rho(k)W,1)dk,
\]
where $1$ is the trivial character of $\R^{\x}$. The first assertion of Proposition A follows immediately
from the claim and the compactness of ${\rm SO}_2(\R)$.

Next we prove the functional equation. Its known that 
$M^*f^{(s)}(g)$ converges absolutely for ${\rm Re}(s)$ is large and extends to a meromorphic 
function on $\C$. Moreover, except countable many $s$, $M^*f^{(s)}$ defines an element in 
$\cB\left(\omega_0^{-1}|\mbox{ }|^{1/2-s},|\mbox{ }|^{s-1/2}\right)$. 
Same argument as above shows the integral $Z(M^*f^{(s)},W)$ has a 
meromorphic continuation to whole complex plane.

On the other hand, except countable many $s$, both integrals $Z(f^{(s)},W)$ and $Z(M^*f^{(s)},W)$
define continuous $\GL_2(\R)$-invariant functional on 
\[
\cB(\mu,\nu)\ot I(s,\omega_0).
\]
Here we index $W=W_f$ by an element $f\in\cB(\mu,\nu)$. Since $\cB(\mu,\nu)_0$
(resp. $I(s,\omega_0)_0$)
 is dense in $\cB(\mu,\nu)$
(resp. $I(s,\omega_0)$),
the uniqueness result of
\cite[Theorem 1.3]{Lok01} implies these two functional are proportion. Therefore there is a 
meromorphic function $\gamma(s)$ which is independent of $f^{(s)}$ and $W$ such that
\[
Z(M^*f^{(s)},W)=\gamma(s)Z(f^{(s)},W),
\]
for all such $f^{(s)}$ and $W\in\cW(\pi,\psi_\xi)$. It remains to show that 
$\gamma(s)=\gamma_{{\rm RS}}(s,{\rm As}\,\pi,\psi,\xi)$. But this follows immediately if we let 
$f^{(s)}=L(2s,\omega_0)^{-1}f^{(s)}_\Phi$ and choice any $W\in\cW(\pi,\psi_\xi)$ so that 
$Z(s,W,\Phi)$ is non-vanishing. The section assertion of Proposition A then follows from equation
\eqref{E:relating zeta integrals and invariant forms}.
\end{proof}

\end{document}